\newcounter{res}[section]
\numberwithin{res}{section}
\newtheorem{thm}[res]{Theorem}
\newtheorem{lem}[res]{Lemma}
\newtheorem{prop}[res]{Proposition}
\newtheorem*{klem}{Key lemma}
\theoremstyle{definition}
\newtheorem{notation}[res]{Notation}
\newtheorem{dfn}[res]{Definition}
\newtheorem{req}[res]{Remark}
\newcommand{\NN}{\mathbb N} 
\newcommand{\ZZ}{\mathbb Z} 
\newcommand{\QQ}{\mathbb Q}
\newcommand{\RR}{\mathbb R} 
\newcommand{\F}{\mathcal{F}}
\def \m {^{-1}}
\newcommand{\cjg}[1]{\overline{#1}}
\renewcommand {\epsilon}{\varepsilon}
\renewcommand {\leq}{\leqslant}
\renewcommand {\bar}{\cjg}
\renewcommand\marginpar[1]{}
\newcommand\eqdef{\ensuremath{\stackrel{\textrm{def}}{=}}}
\newcommand\kup[1]{\ensuremath\left\langle #1 \right\rangle}
\newcommand{\imagesfolder}{.}
\newcommand{\ie}{i.~e.~}
\newcommand{\foamsquarecircle}[1][0.6]{\vcenter{\hbox{\!
 \begin{tikzpicture}[scale={#1},decoration={markings, mark=at
     position 0.5 with {\arrow{>}}},postaction={decorate}]
      \draw[dotted] (1.5,1.5) circle (1.414 cm);
      \draw[postaction = {decorate}] (0.5,0.5)--(1,1);
      \draw[postaction = {decorate}] (2.5,2.5)--(2,2);
      \draw[postaction = {decorate}] (2,1)--(1,1);
      \draw[postaction = {decorate}] (2,1)--(2,2);
      \draw[postaction = {decorate}] (2,1)--(2.5,0.5);
      \draw[postaction = {decorate}] (1,2)--(2,2);
      \draw[postaction = {decorate}] (1,2)--(1,1);
      \draw[postaction = {decorate}] (1,2)--(0.5,2.5);
 \end{tikzpicture}}}}
\newcommand{\websquare}[1][0.6]{\vcenter{\hbox{\!
 \begin{tikzpicture}[scale={#1},decoration={markings, mark=at
     position 0.5 with {\arrow{>}}},postaction={decorate}]
      \draw[postaction = {decorate}] (0.5,0.5)--(1,1);
      \draw[postaction = {decorate}] (2.5,2.5)--(2,2);
      \draw[postaction = {decorate}] (2,1)--(1,1);
      \draw[postaction = {decorate}] (2,1)--(2,2);
      \draw[postaction = {decorate}] (2,1)--(2.5,0.5);
      \draw[postaction = {decorate}] (1,2)--(2,2);
      \draw[postaction = {decorate}] (1,2)--(1,1);
      \draw[postaction = {decorate}] (1,2)--(0.5,2.5);
 \end{tikzpicture}}}}
\newcommand{\webtwovert}[1][0.6]{
\vcenter{\hbox{\!
 \begin{tikzpicture}[scale={#1},decoration={markings, mark=at
     position 0.5 with {\arrow{>}}},postaction={decorate}]
      \draw[postaction= {decorate}] (0.5,0.5) .. controls (1,1) and (1,2) .. (0.5, 2.5);
      \draw[postaction= {decorate}] (2.5,2.5) .. controls (2,2) and (2,1) .. (2.5, 0.5);
 \end{tikzpicture}}}}
\newcommand{\webtwohori}[1][0.6]{\vcenter{\hbox{\!
 \begin{tikzpicture}[scale={#1},decoration={markings, mark=at
     position 0.5 with {\arrow{>}}},postaction={decorate}]
      \draw[postaction= {decorate}] (0.5,0.5) .. controls (1,1) and (2,1) .. (2.5, 0.5);
      \draw[postaction= {decorate}] (2.5,2.5) .. controls (2,2) and (1,2) .. (0.5, 2.5);
 \end{tikzpicture}}}}
\newcommand{\webvert}[1][0.6]{\vcenter{\hbox{\!
 \begin{tikzpicture}[scale={#1},decoration={markings, mark=at
     position 0.5 with {\arrow{>}}},postaction={decorate}]
      \draw[postaction= {decorate}] (0.5,0.5) -- (0.5, 2.5);
  \end{tikzpicture}}}}
\newcommand{\webbigon}[1][0.6]{\vcenter{\hbox{\!
 \begin{tikzpicture}[scale={#1},decoration={markings, mark=at
     position 0.5 with {\arrow{>}}},postaction={decorate}]
      \draw[postaction= {decorate}] (0.5,0.5) -- (0.5, 1);
      \draw[postaction= {decorate}] (0.5,2) -- (0.5, 2.5);
      \draw[postaction= {decorate}] (0.5,2) ..controls (0,1.5) and (0,1.5).. (0.5, 1);
      \draw[postaction= {decorate}] (0.5,2) ..controls (1,1.5) and (1,1.5).. (0.5, 1);
  \end{tikzpicture}}}}
\newcommand{\webcircle}[1][0.6]{\vcenter{\hbox{\!
 \begin{tikzpicture}[scale={#1},decoration={markings, mark=at
     position 0.5 with {\arrow{>}}},postaction={decorate}]
      \draw[postaction= {decorate}] (1.5,1.5) circle (1cm);
 \end{tikzpicture}}}}
\newcommand{\webcirclereverse}[1][0.6]{\vcenter{\hbox{\!
 \begin{tikzpicture}[scale={#1},decoration={markings, mark=at
     position 0.5 with {\arrow{<}}},postaction={decorate}]
      \draw[postaction= {decorate}] (1.5,1.5) circle (1cm);
 \end{tikzpicture}}}}
\newcommand{\linkposcross}[1][0.6]{\vcenter{\hbox{\!
 \begin{tikzpicture}[scale=#1]
      \draw[->] (0.5,0.5) -- (2.5, 2.5);
      \draw[fill=white, color =white] (1.5,1.5) circle (3mm);
      \draw[->] (2.5,0.5) -- (0.5, 2.5);
 \end{tikzpicture}}}}
\newcommand{\linknegcross}[1][0.6]{\vcenter{\hbox{\!
 \begin{tikzpicture}[scale=#1]
      \draw[->] (2.5,0.5) -- (0.5, 2.5);
      \draw[fill=white, color =white] (1.5,1.5) circle (3mm);
      \draw[->] (0.5,0.5) -- (2.5, 2.5);
 \end{tikzpicture}}}}
\newcommand{\linktwovert}[1][0.6]{
\vcenter{\hbox{\!
 \begin{tikzpicture}[scale={#1},decoration={markings, mark=at
     position 0.5 with {\arrow{>}}},postaction={decorate}]
      \draw[->] (0.5,0.5) .. controls (1,1) and (1,2) .. (0.5, 2.5);
      \draw[->] (2.5,0.5) .. controls (2,1) and (2,2) .. (2.5, 2.5);
 \end{tikzpicture}}}}
\newcommand{\webIvert}[1][0.6]{
\vcenter{\hbox{\!
 \begin{tikzpicture}[scale={#1},decoration={markings, mark=at
     position 0.5 with {\arrow{>}}},postaction={decorate}]
      \draw[>-] (0.5,0.5) -- (1.5, 1);
      \draw[>-] (2.5,0.5) -- (1.5, 1);
      \draw[postaction={decorate}] (1.5,2) -- (1.5,1);
      \draw[->] (1.5,2)-- (0.5,2.5);
      \draw[->] (1.5,2)-- (2.5,2.5);
 \end{tikzpicture}}}}
\begin{document}
\author{Louis-Hadrien Robert}
\title[Indecomposable modules for Khovanov-Kuperberg algebras]{A large family of indecomposable projective modules for the 
Khovanov-Kuperberg algebras of $sl_3$-webs}
\address{Louis-Hadrien Robert \\ IMJ \\ 175 rue du Chevaleret \\ 75013 Paris  \\ France}
\email{lrobert@math.jussieu.fr}
\date{\today}
\maketitle
\begin{abstract}
  In this paper we recall the construction from Mackaay -- Pan -- Tubbenhauer of the algebras $K^{\epsilon}$ which allow to understand the $sl_3$ homology for links in a local way (\ie for tangles). Then, by studying the combinatorics of the Kuperberg bracket, we give a large family of non-elliptic webs whose associated projective modules are indecomposable over these algebras.  
\end{abstract}
\section{Introduction}
\label{sec:introduction}
In 1992, Greg Kuperberg (\cite{MR1403861}), following Reshetikhin-Turaev's point of view gave a diagrammatic presentation of a basis of invariant spaces of tensor products of the canonical representation (and its dual) of the quantum group $U_q(sl_3)$. This basis is given by non-elliptic webs \ie  oriented trivalent plane graphs with certain geometric conditions. Along the way he showed that the  webs thought of in terms of $U_q(sl_3)$ representation theory satisfy some relations and this leads to the definition of a Laurent polynomial associated with closed webs, the Kuperberg bracket. In 1999, Khovanov (\cite{MR2100691}) gave a categorification of this Laurent polynomial in a TQFT manner~: with a web he associates a graded $\ZZ$-module with graded dimension given by the Kuperberg bracket and with a foam, which is a natural cobordism  in this context, he associates a graded linear map, with a geometrically understandable degree. This approach allows Khovanov and Kuperberg to show that the web bases are not dual canonical. The goal of this paper is to extend this 1+1 TQFT to a 0+1+1 TQFT. For this purpose we mimic the strategy of \cite{MR1928174} and construct an algebra associated with 0 dimensional objects. The webs are then turned into modules and the foams into morphisms of modules. The same construction is done in \cite{2012arXiv1206.2118M} and they compute the split Grothendieck groups of these algebras. Projective modules associated with non-elliptic webs may be decomposable. We give a large family of webs, called superficial webs, whose associated modules are indecomposable.

\subsubsection*{Acknowledgments} The author wishes to thank Christian Blanchet for suggesting the subject and for plenty of valuable remarks, and Lukas Lewark for fruitful discussions. 
\label{sec:acknoledgment}

\section{A brief review of the algebra $K^\epsilon$}
\label{sec:brief-review-k_s}

\subsection{The 2-category of web-tangles}
\label{sec:webs-foams}

\subsubsection{Webs}
In the following $\epsilon=(\epsilon^1,\dots,\epsilon^n)$ (or $\epsilon_0$, $\epsilon_1$ etc.) will always be a finite sequence of signs, its \emph{length} $n$ will be denoted by $l(\epsilon)$, such an $\epsilon$ will be \emph{admissible} if $\sum_{i=1}^{l(\epsilon)}\epsilon^i$ is divisible by 3.
\label{sec:webs}
\label{sec:two-category-web}
\begin{dfn}[Kuperberg, \cite{MR1403861}]\label{dfn:closed-web}
  A \emph{closed web} is a plane trivalent oriented finite graph (with possibly some vertexless loops and multiple edges) such that every vertex is either a sink or a source.
\end{dfn}
\begin{figure}[h]
  \centering
  \begin{tikzpicture}[yscale= 0.8, xscale= 0.8]
    \begin{scope}
   [yscale = {1}, xscale={1},decoration={markings, mark=at
     position 0.5 with {\arrow{>}}},postaction={decorate}]
\coordinate (A) at (0,0);
\coordinate (B) at (1,0);
\coordinate (C) at (2,-0.5);
\coordinate (D) at (3,0);
\coordinate (E) at (4,0);
\coordinate (F) at (5,0.5);
\coordinate (A1) at (0,1);
\coordinate (B1) at (1,1);
\coordinate (C1) at (2,1.5);
\coordinate (D1) at (3,1);
\coordinate (E1) at (4,1);
\coordinate (F1) at (4,2);
\coordinate (G) at (5,-0.5);
\coordinate (H) at (3.5,-1);

\draw[postaction=decorate] (A1) --(A);
\draw[postaction=decorate] (A1) .. controls +(-0.5,0)  and  +(-0.5,0).. (A);
\draw[postaction=decorate] (A1) -- (B1);
\draw[postaction=decorate] (B)-- (A);
\draw[postaction=decorate] (B)--(B1);
\draw[postaction=decorate] (B)--(C);
\draw[postaction=decorate] (C1)--(B1);
\draw[postaction=decorate] (C1)--(D1);
\draw[postaction=decorate] (C1)--(F1);
\draw[postaction=decorate] (D)--(C);
\draw[postaction=decorate] (D)--(D1);
\draw[postaction=decorate] (D)--(E);
\draw[postaction=decorate] (F) -- (G);
\draw[postaction=decorate] (F)-- (E);
\draw[postaction=decorate] (F) .. controls +(0,0.5) and  +(0.4,0.4) .. (F1);
\draw[postaction=decorate] (E1)--(F1);
\draw[postaction=decorate] (E1)--(D1);
\draw[postaction=decorate] (E1)--(E);
\draw[postaction=decorate] (H)--(C);
\draw[postaction=decorate] (H) .. controls +(0.5,0) and  +(0,-0.5) .. (G);
\draw[postaction=decorate] (H) .. controls +(0.2,0.3) and  +(-0.4,-0.1) .. (G);
\draw[postaction=decorate] (7,1) circle (0.5cm);
\end{scope}
  \end{tikzpicture}  
  \caption{Example of a closed web.}
  \label{fig:example-closed-web}
\end{figure}
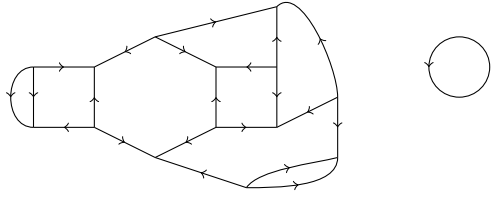
\begin{req}\label{req:basic-on-web}
The orientation condition is equivalent to say that the graph is bipartite (by sinks and sources).  From the Euler characteristic of connected plane graphs, we obtain that a closed-web contains at least a square, a digon or a vertexless circle. 
\end{req}
\begin{prop}[Kuperberg\cite{MR1403861}] \label{prop:Kup}
  There exists one and only one map $\kup{\cdot}$ from closed webs to Laurent polynomials in $q$ which is invariant by isotopy, multiplicative with respect to disjoint union and which satisfies the following local relations~:
  \begin{align*}
   \kup{\websquare[0.4]} &= \kup{\webtwovert[0.4]} + \kup{\webtwohori[0.4]}, \\
   \kup{\webbigon[0.4]\,}  &= [2] \cdot \kup{\webvert[0.4]\,},\\
   \kup{\webcircle[0.4]} &= \kup{\webcirclereverse[0.4]} = [3],
  \end{align*}
where $[n]\eqdef\frac{q^n-q^{-n}}{q-q^{-1}}.$ We call this polynomial the \emph{Kuperberg bracket}. It's easy to check that the Kuperberg bracket of a web is symmetric in $q$ and $q^{-1}$.
\end{prop}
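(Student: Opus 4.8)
The plan is to establish uniqueness and existence by separate arguments: the first rests on Remark~\ref{req:basic-on-web} and yields as a bonus that the invariant is a \emph{Laurent} polynomial, while the second comes from the Reshetikhin--Turaev functor for $U_q(sl_3)$.

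For uniqueness, I would induct on the lexicographically ordered pair $(\#\text{vertices},\ \#\text{vertexless loops})$ of a closed web $D$. If $D$ has no vertex it is a disjoint union of vertexless loops, and multiplicativity together with the third relation force $\kup{D}=[3]^{\#\text{loops}}$ (with $\kup{\varnothing}=1$, since $\kup{\varnothing}$ must be the unit for multiplication). If $D$ has a vertex, then applying Remark~\ref{req:basic-on-web} to a connected component of $D$ containing that vertex — whose faces have even size, by bipartiteness, and average size less than $6$, by the Euler characteristic count — produces a digon or a square face of $D$; isolating it in a disk and applying the bigon or the square relation expresses $\kup{D}$ as a $\ZZ[q,q\m]$-linear combination of the brackets of closed webs with strictly smaller complexity ($2$, respectively $4$, fewer vertices). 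Running this recursion to the end, any two maps satisfying the three axioms take the same algorithmically determined value on every closed web, so there is at most one. The same recursion shows that this value lies in $\ZZ[q,q\m]$: every coefficient introduced along the way belongs to $\ZZ[q,q\m]$ and the terminal values are powers of $[3]$.

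For existence, I would evaluate a closed web $D$ in the ribbon category $\mathrm{Rep}(U_q(sl_3))$: colour each edge by the fundamental representation $V$ according to its orientation, and each source (respectively sink) vertex by a generator of the one-dimensional space $\mathrm{Hom}(\KK,V^{\otimes 3})$ (respectively $\mathrm{Hom}(V^{\otimes 3},\KK)$), normalised once and for all. Since a closed web has empty boundary, $D$ defines an endomorphism of the monoidal unit, that is, a scalar $\kup{D}\in\CC(q)$. Functoriality of the Reshetikhin--Turaev construction gives isotopy invariance and monoidality gives multiplicativity under disjoint union, while the three relations are the classical $sl_3$ identities: the square relation is the diagrammatic form of the decomposition of $V\otimes V^{*}$ in which the trivial representation occurs once, the bigon relation is the standard digon evaluation equal to $[2]$, and the circle relation is $\dim_q V=[3]$. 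That $\kup{D}$ is then a Laurent polynomial, rather than merely an element of $\CC(q)$, follows from the uniqueness step, which computes it by operations internal to $\ZZ[q,q\m]$. Finally, reversing all orientations of $D$ leaves the three relations and the value $[3]$ unchanged and corresponds to $q\leftrightarrow q\m$, which gives the claimed symmetry.

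The genuine obstacle is existence, and inside it the precise normalisation of the trivalent vertex maps so that the square relation holds with coefficient $1$ on each of the two terms — this is where the specifically $sl_3$ (rather than $sl_2$) content of the statement lies, and Remark~\ref{req:basic-on-web}, which only guarantees that the reduction terminates, does not help here. A self-contained alternative for existence is to forgo representation theory and regard the three relations as a rewriting system on closed webs: it terminates by the complexity function above, and one proves local confluence by analysing the finitely many ways two left-hand sides can overlap (two squares sharing an edge or a vertex; a square meeting a digon; adjacent or nested digons; a digon or square whose complementary region is a vertexless loop; and so on), after which Newman's lemma produces a well-defined normal form and hence the desired map. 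This confluence analysis is elementary but is the real cost of that route.
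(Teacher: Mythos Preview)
Your argument is correct and follows exactly the paper's two-line strategy: uniqueness via the reduction guaranteed by Remark~\ref{req:basic-on-web}, existence via the $U_q(sl_3)$ representation-theoretic construction of \cite{MR1403861} (the confluence alternative you sketch is a nice self-contained substitute the paper does not pursue). One small correction: the symmetry in $q\leftrightarrow q^{-1}$ follows immediately from the fact that the recursion introduces only the symmetric coefficients $1$, $[2]$, $[3]$; orientation reversal is orthogonal to this and does not itself implement $q\leftrightarrow q^{-1}$.
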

\begin{proof}
  Uniqueness comes from remark \ref{req:basic-on-web}. The existence follows from the representation theoretic point of view developed in \cite{MR1403861}. Note that a non-quantified version of this result is in \cite{MR1172374}.
\end{proof}
\begin{dfn}
  The \emph{degree} of a symmetric Laurent polynomial $P(q)=\sum_{i\in \ZZ}a_iq^{i}$ is  $\max_{i\in \ZZ}\{i \textrm{ such that }a_i\neq 0\}$.
\end{dfn}
\begin{dfn}\label{dfn:webtangle}
  A \emph{$(\epsilon_0,\epsilon_1)$-web-tangle} $w$ is an intersection of a closed web $w'$ with $[0,1]\times [0,1]$ such that :
  \begin{itemize}
  \item there exists $\eta_0 \in ]0,1]$ such that $w\cap [0,1] \times [0,\eta_0] = \{\frac{1}{2l(\epsilon_0)}, \frac{1}{2l(\epsilon_0)} + \frac{1}{l(\epsilon_0)},  \frac{1}{2l(\epsilon_0)} + \frac{2}{l(\epsilon_0)}, \dots, \frac{1}{2l(\epsilon_0)} + \frac{l(\epsilon_0)-1}{l(\epsilon_0)} \}\times [0,\eta_0]$,
  \item there exists $\eta_1 \in [0,1[$ such that $w\cap [0,1] \times [\eta_1,1] = \{\frac{1}{2l(\epsilon_1)}, \frac{1}{2l(\epsilon_1)} + \frac{1}{l(\epsilon_1)},  \frac{1}{2l(\epsilon_1)} + \frac{2}{l(\epsilon_1)}, \dots, \frac{1}{2l(\epsilon_1)} + \frac{l(\epsilon_1)-1}{l(\epsilon_1)} \}\times [\eta_1,1]$,
  \item the orientations of the edges of $w$, match $-\epsilon_0$ and $+\epsilon_1$ (see figure \ref{fig:exampl_webtangle} for conventions).
  \end{itemize}
When $\epsilon_1$ is the empty sequence, then we'll speak of \emph{$\epsilon_0$-webs}. And if $w$ is an $\epsilon$-web we will say that $\epsilon$ is the \emph{boundary} of $w$.
\end{dfn}
If $w_1$ is a $(\epsilon_0,\epsilon_1)$-web-tangle and $w_2$ is a $(\epsilon_1, \epsilon_2)$-web-tangle we define $w_1w_2$ to be the $(\epsilon_0,\epsilon_2)$-web-tangle obtained by gluing $w_1$ and $w_2$ along $\epsilon_1$ and resizing. Note that this can be thought as a composition if we think about a $(\epsilon,\epsilon')$-web-tangle as a morphism from $\epsilon'$ to $\epsilon$ (\ie the web-tangles should be read as morphisms from top to bottom). The \emph{mirror image} of a $(\epsilon_0,\epsilon_1)$-web-tangle $w$ is mirror image of $w$ with respect to $\RR\times \{\frac12\}$ with all orientations reversed. This is a $(\epsilon_1,\epsilon_0)$-web-tangle and we denote it by $\bar{w}$. If $w$ is a $(\epsilon,\epsilon)$-web-tangle the \emph{closure} of $w$ is the closed web obtained by connecting the top and the bottom by simple arcs (this is like a braid closure). We denote it by $\mathrm{tr}(w)$.   
\begin{figure}[h]
  \centering
  \begin{tikzpicture}[yscale= 0.5, xscale= 0.5]
    \input{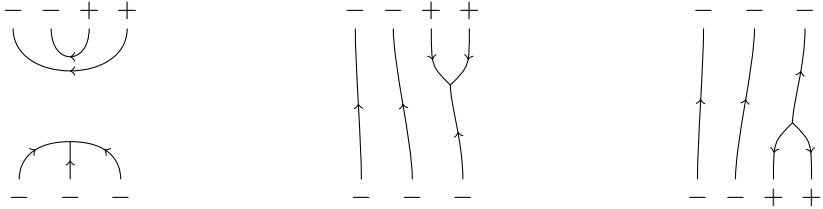}
  \end{tikzpicture}
  \caption{Two examples of $(\epsilon_0,\epsilon_1)$-web-tangles with $\epsilon_0 = (-,-,-)$ and $\epsilon_1=(-,-,+,+)$ and the mirror image of the second one.}
  \label{fig:exampl_webtangle}
\end{figure}
\begin{dfn}
  A web-tangle with no circle, no digon and no square is said to be \emph{non-elliptic}. The non-elliptic web-tangles are the minimal ones in the sense that they cannot be reduced by the relations of proposition \ref{prop:Kup}.
\end{dfn}
\begin{prop}[Kuperberg, \cite{MR1403861}] \label{prop:NEFinite}
  For any given couple $(\epsilon_0,\epsilon_1)$ of sequences of signs there are finitely many non-elliptic $(\epsilon_0,\epsilon_1)$-web-tangles.
\end{prop}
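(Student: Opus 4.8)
The plan is to reduce the statement to a uniform bound on the number of trivalent vertices of a non-elliptic web-tangle with fixed boundary, and then to extract such a bound from the Euler characteristic together with the non-ellipticity hypothesis.

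\emph{Reduction.} Fix $(\epsilon_0,\epsilon_1)$ and put $b=l(\epsilon_0)+l(\epsilon_1)$. A non-elliptic $(\epsilon_0,\epsilon_1)$-web-tangle $w$ has no closed connected component: such a component would be a closed web, which by Remark~\ref{req:basic-on-web} contains a square, a digon or a vertexless circle, contradicting non-ellipticity. Hence $w$ is a trivalent plane graph in the square with exactly $b$ univalent vertices, located at the prescribed boundary points. Up to isotopy rel boundary such a graph is determined by the underlying abstract graph, the cyclic order of the edges around each trivalent vertex, and the matching of the $b$ (fixed) boundary points to the ends of the boundary edges; for a given number $V$ of trivalent vertices there are finitely many such combinatorial data. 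So it is enough to show that $V$ is bounded by a function of $b$.

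\emph{The bound.} Regard $w$ as a graph in the disk $D$, with $\partial D$ subdivided into $b$ arcs by the boundary points. Every trivalent vertex has degree $3$ and every boundary point has degree $3$ (its web edge plus two boundary arcs), so counting degrees gives the number of edges as $(3V+3b)/2$, and Euler's formula then gives the number of internal faces as an explicit affine function of $V$ and $b$. Non-ellipticity forces every face not meeting $\partial D$ to have at least $6$ sides: there are no loops at a vertex (an edge cannot be oriented both into and out of a sink or a source), face boundaries alternate sinks and sources so have even length, and digons and squares are forbidden. Equivalently, metrizing each such face as a regular Euclidean polygon produces a piecewise-Euclidean metric on $D$ which is non-positively curved at every internal vertex, the three angles around a trivalent vertex summing to at least $3\cdot\frac{2\pi}{3}=2\pi$. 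By the combinatorial Gauss--Bonnet formula for the disk the total curvature is then carried by $\partial D$ and by the faces meeting it, and one checks this is controlled by $b$; an isoperimetric-type estimate --- a non-positively curved polygonal disk whose boundary has combinatorial length $O(b)$ contains $O(b^2)$ faces --- bounds the number of internal faces, hence $V$.

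\emph{The main obstacle.} The delicate point is the boundary. Faces meeting $\partial D$ need not be hexagons --- a single edge joining two consecutive boundary points already cuts off a bigon face --- so they are the only possible source of positive combinatorial curvature, and one must show that their total contribution, together with the number of trivalent vertices lying on them, is bounded by a function of $b$ alone (each such face uses at least one of the $b$ boundary arcs, so there are at most $b$ of them, but their sizes must also be controlled). A second subtlety is that a web tiled entirely by hexagons is genuinely flat, so Gauss--Bonnet alone does not bound it; here the isoperimetric input is essential, a flat polygonal disk of boundary length $O(b)$ having area, hence number of faces, $O(b^2)$. Alternatively one can bypass the geometry: non-elliptic web-tangles with boundary $(\epsilon_0,\epsilon_1)$ are linearly independent vectors in the finite-dimensional space of $U_q(sl_3)$-intertwiners between the tensor products of copies of the vector representation and its dual prescribed by $\epsilon_0$ and $\epsilon_1$, so their number is at most the dimension of that space.
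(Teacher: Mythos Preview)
The paper gives no proof of this proposition; it is attributed to Kuperberg and stated without argument, so there is nothing here to compare your attempt against.

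Your closing sentence --- non-elliptic webs are linearly independent in a finite-dimensional $U_q(sl_3)$-intertwiner space, hence finite in number --- is correct and is how the result actually falls out of Kuperberg's work. With a citation for the linear independence, that line alone is a complete proof.

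The combinatorial route you sketch first is more interesting, and you are candid about its gaps. Two points deserve emphasis. First, the Euler/Gauss--Bonnet count by itself only yields $\sum_f(6-\deg f)=O(b)$ over the internal faces, bounding the total curvature defect but not the number of faces: an arbitrarily large hexagonal patch has defect zero, exactly as you note. Second, the quadratic isoperimetric inequality you then invoke (a non-positively curved disk diagram of boundary length $n$ has $O(n^2)$ faces) is true --- the flat hexagonal tiling is the extremal case --- but it is a theorem in its own right, essentially the quadratic Dehn function for $C(6)$ small-cancellation or CAT(0) disk diagrams, and you neither prove it nor give a reference. A further step hidden behind ``one checks this is controlled by $b$'' is that the relevant perimeter is not $b$ itself but the combinatorial boundary of the union of internal faces, and bounding \emph{that} by $O(b)$ is its own (doable) Euler computation. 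All of this can be filled in, but as the proposal stands it is the representation-theoretic bypass at the end that actually carries the argument.
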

\begin{req}
  From the combinatorial flow modulo 3, we obtain that there exist some $(\epsilon_0,\epsilon_1)$-webs if and only if the sequence $-\epsilon_0$ concatenated with $\epsilon_1$ is admissible.
\end{req}
\subsubsection{Foams} All material here comes from \cite{MR2100691}.
\label{sec:foams}
\begin{dfn}
  A \emph{pre-foam} is a smooth oriented compact surface $\Sigma$ (its connected component are called \emph{facets}) together with the following data~:
\begin{itemize}
\item A partition of the connected components of the boundary into cyclically ordered 3-sets and for each 3-set $(C_1,C_2,C_3)$, three orientation preserving diffeomorphisms $\phi_1:C_2\to C_3$, $\phi_2:C_3\to C_1$ and $\phi_3:C_1\to C_2$ such that $\phi_3 \circ \phi_2 \circ \phi_1 = \mathrm{id}_{C_2}$.
\item A function from the set of facets to the set of non-negative integers (this gives the number of \emph{dots} on each facet).
\end{itemize}
The \emph{CW-complex associated with a pre-foam} is the 2-dimensional CW-complex $\Sigma$ quotiented by the diffeomorphisms so that the three circles of one 3-set are identified and become just one called a \emph{singular circle}.
The \emph{degree} of a pre-foam $f$ is equal to $-2\chi(\Sigma')$ where $\chi$ is the Euler characteristic, $\Sigma'$ is the CW-complex associated with $f$ with the dots punctured out (\ie a dot increases the degree by 2).
\end{dfn}
\begin{req}
  The CW-complex has two local models depending on whether we are on a singular circle or not. If a point $x$ is not on a singular circle, then it has a neighborhood diffeomorphic to a 2-dimensional disk, else it has a neighborhood diffeomorphic to a Y shape times an interval (see figure \ref{fig:yshape}).
  \begin{figure}[h]
    \centering
    \begin{tikzpicture}[scale=1]
      \input{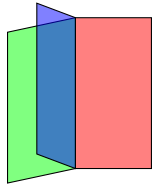}
    \end{tikzpicture}
    \caption{Singularities of a pre-foam}
    \label{fig:yshape}
  \end{figure}
\end{req}
\begin{dfn}
  A \emph{closed foam} is the image of an embedding of the CW-complex associated with a pre-foam such that the cyclic orders of the pre-foam are compatible with the left-hand rule in $\RR^3$ with respect to the orientation of the singular circles\footnote{We mean here that if, next to a singular circle, with the forefinger of the left hand we go from face 1 to face 2 to face 3 the thumb points to indicate the orientation of the singular circle (induced by orientations of facets). This is not quite canonical, physicists use more the right-hand rule, however this is the convention used in \cite{MR2100691}.}. The \emph{degree} of a closed foam is the degree of the underlying pre-foam. 
\end{dfn}
\begin{dfn}\label{dfn:wwfoam}
  If $w_b$ and $w_t$ are $(\epsilon_0,\epsilon_1)$-web-tangles, a \emph{$(w_b,w_t)$-foam} $f$ is the intersection of a foam $f'$ with $\RR\times [0,1]\times[0,1]$ such that
  \begin{itemize}
  \item there exists $\eta_0 \in ]0,1]$ such that $f\cap \RR \times [0,\eta_0]\times [0,1] = \{\frac{1}{2l(\epsilon_0)}, \frac{1}{2l(\epsilon_0)} + \frac{1}{l(\epsilon_0)},  \frac{1}{2l(\epsilon_0)} + \frac{2}{l(\epsilon_0)}, \dots, \frac{1}{2l(\epsilon_0)} + \frac{l(\epsilon_0)-1}{l(\epsilon_0)} \}\times [0,\eta_0]\times [0,1]$,
  \item there exists $\eta_1 \in [0,1[$ such that $f\cap \RR \times [\eta_1,1]\times [0,1] = \{\frac{1}{2l(\epsilon_1)}, \frac{1}{2l(\epsilon_1)} + \frac{1}{l(\epsilon_1)},  \frac{1}{2l(\epsilon_1)} + \frac{2}{l(\epsilon_1)}, \dots, \frac{1}{2l(\epsilon_1)} + \frac{l(\epsilon_1)-1}{l(\epsilon_1)}\}\times [\eta_1,1]\times [0,1]$,
  \item there exists $\eta_b \in ]0,1]$ such that $f\cap \RR \times [0,1 ]\times  [0, \eta_b] = w_b \times [0, \eta_b]$,
  \item there exists $\eta_t \in [0,1[$ such that $f\cap \RR \times [0,1 ]\times  [\eta_t, 1] = w_t \times [\eta_t,1]$,
  \end{itemize}
with compatibility of orientations of the facets of $f$ with the orientation of $w_t$ and the reversed orientation of $w_b$.
The \emph{degree} of a $(w_b,w_t)$-foam $f$ is equal to $\chi(w_b)+\chi(w_t)-2\chi(\Sigma)$ where $\Sigma$ is the underlying CW-complex associated with $f$ with the dots punctured out. 
\end{dfn}
If $f_b$ is a $(w_b,w_m)$-foam and $f_t$ is a $(w_m, w_t)$-foam we define $f_bf_t$ to be the $(w_b,w_t)$ foam obtained by gluing $f_b$ and $f_t$ along $w_m$ and resizing. This operation may be thought as a composition if we think of a $(w_1,w_2)$-foam as a morphism from $w_2$ to $w_1$ \ie from the top to the bottom. This composition map is degree preserving. Like for the webs, we define the \emph{mirror image} of a $(w_1,w_2)$-foam $f$ to be the $(w_2,w_1)$-foam which is the mirror image of $f$ with respect to $\RR\times\RR\times \{\frac12\}$ with all orientations reversed. We denote it by $\bar{f}$.
\begin{dfn}
  If $\epsilon_0=\epsilon_1=\emptyset$ and $w$ is a closed web, then a $(\emptyset,w)$-foam is simply called \emph{foam} or \emph{$w$-foam} when one wants to focus on the boundary of the foam.
\end{dfn}
All these data together lead  to the definition of a monoidal 2-category. 
\begin{dfn}
  The 2-category $\mathcal{WT}$ is the monoidal\footnote{Here we choose a rather strict point of view and hence the monoidal structure is strict (we consider everything up to isotopy), but it is possible to define the notion in a non-strict context, and the same data gives us a monoidal bicategory.} 2-category given by the following data~:
  \begin{itemize}
  \item The objects are finite sequences of signs,
  \item The 1-morphism from $\epsilon_1$ to $\epsilon_0$ are isotopy classes (with fixed boundary) of $(\epsilon_0,\epsilon_1)$-web-tangles,
  \item The 2-morphism from $\widehat{w_t}$ to $\widehat{w_b}$ are $\QQ$-linear combinations of isotopy classes of $(w_b,w_t)$-foams, where\ $\widehat{\cdot}$\ stands for the ``isotopy class of''. The 2-morphisms come with a grading, the composition respects the degree.
  \end{itemize}
The monoidal structure is given by concatenation of sequences at the $0$-level, and disjoint union of vertical strands or disks (with corners) at the $1$ and $2$ levels. 
\end{dfn}
\subsection{Khovanov's TQFT for web-tangles}
\label{sec:khovanov-tqft-web}
In \cite{MR2100691}, Khovanov defines a numerical invariant for pre-foams and this allows him to construct a TQFT $\mathcal{F}$ from the category $\hom_{\mathcal{WT}}(\emptyset,\emptyset)$ to the category of graded $\QQ$-modules, (via a universal construction à la BHMV 
\cite{MR1362791}). This TQFT is graded (this comes from the fact that pre-foams with non-zero degree are evaluated to zero), and satisfies the following local relations (brackets indicate grading shifts)~:
\begin{align*}
\mathcal{F}\left(\websquare[0.4]\,\right) &= 
\mathcal{F}\left({\webtwovert[0.4]}\,\right) \oplus
\mathcal{F}\left({\webtwohori[0.4]}\,\right), \\
\mathcal{F}\left({\webbigon[0.4]}\,\right)  &= 
\mathcal{F}\left({\webvert[0.4]}\,\right)\{-1\}\oplus \mathcal{F}\left({\webvert[0.4]}\right)\{1\},\\
\mathcal{F}\left({\webcircle[0.4]\,}\right) &= 
\mathcal{F}\left({\webcirclereverse[0.4]}\,\right) = \QQ\{-2\} \oplus \QQ \oplus\QQ\{2\}.
  \end{align*}
These relations show that $\mathcal{F}$ is a categorified counterpart of the Kuperberg bracket. We sketch the construction below.
\begin{dfn}
We denote by $\mathcal{A}$ the Frobenius algebra $\ZZ[X]/(X^3)$ with trace $\tau$ given by:
\[\tau(X^2)=-1, \quad \tau(X)=0, \quad \tau(1)=0.\] 
We equip $\mathcal{A}$ with a graduation by setting $\deg(1)=-2$, $\deg(X)=0$ and $\deg(X^2)=2$. With these settings, the multiplication has degree 2 and the trace has degree -2. The co-multiplication is determined by the multiplication and the trace and we have :
\begin{align*}
  &\Delta(1) = -1\otimes X^2 - X\otimes X - X^2\otimes 1 \\
  &\Delta(X) = -X\otimes X^2 - X^2\otimes X \\
  &\Delta(X^2) = -X^2\otimes X^2
\end{align*}
\end{dfn}
This Frobenius algebra gives us a 1+1 TQFT (this is well-known, see \cite{kock2004frobenius} for details), we denote it by $\mathcal{F}$~: the circle is sent to $\mathcal{A}$, a cup to the unity, a cap to the trace, and a pair of pants either to multiplication or co-multiplication. A dot on a surface will represent multiplication by $X$ so that $\mathcal{F}$ extends to the category of oriented dotted (1+1)-cobordisms.
We then have a surgery formula given by figure~\ref{fig:surg}. 
\begin{figure}[h!]
  \centering
  \begin{tikzpicture}[scale=0.5]
    \begin{scope}[xshift=-0.5cm]
       \draw (0,-1) arc (270:90:0.5 and 1);
      \draw[dotted] (0,-1) arc (-90:90:0.5 and 1);
      \draw (3,0) ellipse (0.5 and 1);
      \draw (0,-1) -- (3,-1);
      \draw (0,1) -- (3,1);
\end{scope}
\node at (4.5,0) {$=-$};
\node at (10,0) {$-$};
\node at (15,0) {$-$};
\begin{scope}[xshift=6cm]
       \draw (0,-1) arc (270:90:0.5 and 1);
      \draw[dotted] (0,-1) arc (-90:90:0.5 and 1);
      \draw (3,0) ellipse (0.5 and 1);
      \draw (0,-1) .. controls +(1.5,0) and +(1.5,0) .. (0,1);
      \draw (3,-1) .. controls +(-1.5,0) and +(-1.5,0) .. (3,1);
      \filldraw (0.7,0.2) ellipse (1pt and 2pt);
      \filldraw (0.7,-0.2) ellipse (1pt and 2pt);
\end{scope}
\begin{scope}[xshift=11cm]
       \draw (0,-1) arc (270:90:0.5 and 1);
      \draw[dotted] (0,-1) arc (-90:90:0.5 and 1);
      \draw (3,0) ellipse (0.5 and 1);
      \draw (0,-1) .. controls +(1.5,0) and +(1.5,0) .. (0,1);
      \draw (3,-1) .. controls +(-1.5,0) and +(-1.5,0) .. (3,1);
      \filldraw (0.7,0) ellipse (1pt and 2pt);
      \filldraw (2.2,0) ellipse (1pt and 2pt);
\end{scope}
\begin{scope}[xshift=16cm]
       \draw (0,-1) arc (270:90:0.5 and 1);
      \draw[dotted] (0,-1) arc (-90:90:0.5 and 1);
      \draw (3,0) ellipse (0.5 and 1);
      \draw (0,-1) .. controls +(1.5,0) and +(1.5,0) .. (0,1);
      \draw (3,-1) .. controls +(-1.5,0) and +(-1.5,0) .. (3,1);
      \filldraw (2.2,-0.2) ellipse (1pt and 2pt);
      \filldraw (2.2,0.2) ellipse (1pt and 2pt);
\end{scope}
  \end{tikzpicture}
  \caption{The surgery formula for the TQFT $\mathcal{F}$.}
  \label{fig:surg}
\end{figure}
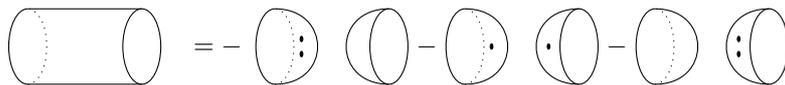

 This TQFT gives of course a numercial invariant for closed dotted oriented surfaces. If one defines numerical values for the differently dotted theta pre-foams (the theta pre-foam consists of 3 disks with trivial diffeomorphisms between their boundary see figure \ref{fig:thetapre}) then by applying the surgery formula, one is able to compute a numerical value for pre-foams. 

\begin{figure}[h!]
  \centering
  \begin{tikzpicture}
    \input{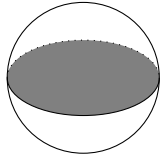}
  \end{tikzpicture}
  \caption{The dotless theta pre-foam.}
  \label{fig:thetapre}
\end{figure}
\begin{figure}[h!]
  \centering
\begin{tikzpicture}[scale = 0.7]
\input{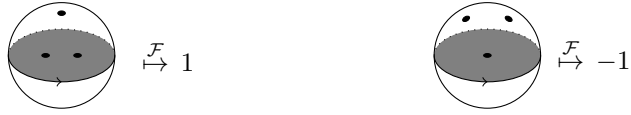}
\end{tikzpicture}
  \caption{The evaluations of dotted theta foams, the evaluation is unchanged when one cyclically permutes the faces. All the configurations which cannot be obtained from these by cyclic permutation are sent to $0$ by $\mathcal{F}$.}
  \label{fig:thetaeval}
\end{figure}

In \cite{MR2100691}, Khovanov shows that setting the evaluations of the dotted theta foams as shown on figure \ref{fig:thetaeval}, leads to a well defined numerical invariant $\mathcal{F}$ for pre-foams. This numerical invariant gives the opportunity to build a (closed web, $(\cdot,\cdot)$-foams)-TQFT~: for a given web $w$, consider the $\QQ$-module generated by all the $(w,\emptyset)$-foams, and mod this space out by the kernel of the bilinear map $(f,g)\mapsto \mathcal{F}(\bar{f}g)$. Note that $\bar{f}g$ is a closed foam. Khovanov showed that the obtained graded vector spaces are finite dimensional with graded dimensions given by the Kuperberg formulae, and he showed that we have the local relations described on figure~\ref{fig:localrel}.

\begin{figure}[h]
  \centering 
  \begin{tikzpicture}[scale=0.25]
\begin{scope}[xshift=0cm, yshift= 0cm, decoration={markings, mark=at
     position 0.5 with {\arrow{>}}},postaction={decorate}]
\draw[postaction= {decorate}] (-2,0) arc (-180:0:2 and 1);
\draw (-2,0) arc (180:0:2);
\draw[dashed] (-2,0) arc (180:0:2 and 1);
\draw[dashed] (-2,0) arc (-180:-135:2);
\draw[dashed] (2,0) arc (0:-45:2);
\draw (0,0)++(-135:2) arc (-135:-45:2);
\draw[dashed] (0,0) +(135:2) -- +(45:2);
\draw (0,0)++(135:2) --  ++(-0.7,0) -- ++(-135:4) -- ++(7,0) -- ++(45:4) -- (45:2);
\filldraw (0,1.8) ellipse (3pt and 1.5pt);
\end{scope}
\node at (5,0) {$=-$};
\begin{scope}[xshift=10cm, yshift= 0cm, decoration={markings, mark=at
     position 0.5 with {\arrow{>}}},postaction={decorate}]
\draw[postaction= {decorate}] (-2,0) arc (-180:0:2 and 1);
\draw (-2,0) arc (180:0:2);
\draw[dashed] (-2,0) arc (180:0:2 and 1);
\draw[dashed] (-2,0) arc (-180:-135:2);
\draw[dashed] (2,0) arc (0:-45:2);
\draw (0,0)++(-135:2) arc (-135:-45:2);
\draw[dashed] (0,0) +(135:2) -- +(45:2);
\draw (0,0)++(135:2) --  ++(-0.7,0) -- ++(-135:4) -- ++(7,0) -- ++(45:4) -- (45:2);
\filldraw (0,-1.8) ellipse (3pt and 1.5pt);
\end{scope}
\node at (15,0) {$=$};
\begin{scope}[xshift=20cm, yshift= 0cm, decoration={markings, mark=at
     position 0.5 with {\arrow{>}}},postaction={decorate}]
\draw (0,0)++(135:2) --  ++(-0.7,0) -- ++(-135:4) -- ++(7,0) -- ++(45:4) -- (135:2);
\end{scope}
\begin{scope} [yshift= -6cm]
  \begin{scope}[xshift=0cm, yshift= 0cm, decoration={markings, mark=at
     position 0.5 with {\arrow{>}}},postaction={decorate}]
\draw[postaction= {decorate}] (-2,0) arc (-180:0:2 and 1);
\draw (-2,0) arc (180:0:2);
\draw[dashed] (-2,0) arc (180:0:2 and 1);
\draw[dashed] (-2,0) arc (-180:-135:2);
\draw[dashed] (2,0) arc (0:-45:2);
\draw (0,0)++(-135:2) arc (-135:-45:2);
\draw[dashed] (0,0) +(135:2) -- +(45:2);
\draw (0,0)++(135:2) --  ++(-0.7,0) -- ++(-135:4) -- ++(7,0) -- ++(45:4) -- (45:2);
\end{scope}
\node at (5,0) {$=$};
\begin{scope}[xshift=10cm, yshift= 0cm, decoration={markings, mark=at
     position 0.5 with {\arrow{>}}},postaction={decorate}]
\draw[postaction= {decorate}] (-2,0) arc (-180:0:2 and 1);
\draw (-2,0) arc (180:0:2);
\draw[dashed] (-2,0) arc (180:0:2 and 1);
\draw[dashed] (-2,0) arc (-180:-135:2);
\draw[dashed] (2,0) arc (0:-45:2);
\draw (0,0)++(-135:2) arc (-135:-45:2);
\draw[dashed] (0,0) +(135:2) -- +(45:2);
\draw (0,0)++(135:2) --  ++(-0.7,0) -- ++(-135:4) -- ++(7,0) -- ++(45:4) -- (45:2);
\filldraw (0,1.8) ellipse (3pt and 1.5pt);
\filldraw (0,-1.8) ellipse (3pt and 1.5pt);
\end{scope}
\node at (15,0) {$=0$};
\end{scope}
\begin{scope}[yshift=-12cm]
  \begin{scope}[xshift=0cm, yshift= 0cm, decoration={markings, mark=at
     position 0.5 with {\arrow{>}}},postaction={decorate}]
\draw[postaction= {decorate}] (-2,0) arc (-180:0:2 and 1);
\draw (-2,0) arc (180:0:2);
\draw[dashed] (-2,0) arc (180:0:2 and 1);
\draw[dashed] (-2,0) arc (-180:-135:2);
\draw[dashed] (2,0) arc (0:-45:2);
\draw (0,0)++(-135:2) arc (-135:-45:2);
\draw[dashed] (0,0) +(135:2) -- +(45:2);
\draw (0,0)++(135:2) --  ++(-0.7,0) -- ++(-135:4) -- ++(7,0) -- ++(45:4) -- (45:2);
\filldraw (-0.2,-1.8) ellipse (3pt and 1.5pt);
\filldraw (0.2,-1.8) ellipse (3pt and 1.5pt);
\end{scope}
\node at (5,0) {$=-$};
\begin{scope}[xshift=10cm, yshift= 0cm, decoration={markings, mark=at
     position 0.5 with {\arrow{>}}},postaction={decorate}]
\draw[postaction= {decorate}] (-2,0) arc (-180:0:2 and 1);
\draw (-2,0) arc (180:0:2);
\draw[dashed] (-2,0) arc (180:0:2 and 1);
\draw[dashed] (-2,0) arc (-180:-135:2);
\draw[dashed] (2,0) arc (0:-45:2);
\draw (0,0)++(-135:2) arc (-135:-45:2);
\draw[dashed] (0,0) +(135:2) -- +(45:2);
\draw (0,0)++(135:2) --  ++(-0.7,0) -- ++(-135:4) -- ++(7,0) -- ++(45:4) -- (45:2);
\filldraw (0.2,1.8) ellipse (3pt and 1.5pt);
\filldraw (-0.2,1.8) ellipse (3pt and 1.5pt);
\end{scope}
\node at (15,0) {$=$};
\begin{scope}[xshift=20cm, yshift= 0cm, decoration={markings, mark=at
     position 0.5 with {\arrow{>}}},postaction={decorate}]
\draw (0,0)++(135:2) --  ++(-0.7,0) -- ++(-135:4) -- ++(7,0) -- ++(45:4) -- (135:2);
\filldraw (0,0) ellipse (3pt and 1.5pt);
\end{scope}
\end{scope}
\end{tikzpicture}

\vspace{0.7cm}
\begin{tikzpicture}[scale=0.24]
\begin{scope}[xshift=0cm, yshift= 0cm, decoration={markings, mark=at
     position 0.5 with {\arrow{>}}},postaction={decorate}]
\draw (0,-2) arc (270:90:1 and 2);
\draw[dashed] (0,-2) arc (-90:90:1 and 2);
\fill[gray] (3,0) ellipse (1 and 2);
\draw (3,-2) arc (270:90:1 and 2);
\draw[dashed] (3,-2) arc (-90:90:1 and 2);
\fill[gray] (6,0) ellipse (1 and 2);
\draw (6,-2) arc (270:90:1 and 2);
\draw[dashed] (6,-2) arc (-90:90:1 and 2);
\draw (9,0) ellipse (1 and 2);
\draw (0,-2) -- (9,-2);
\draw (0,2) -- (9,2);
\node at (12,0) {$=-$};
\draw (15,-2) arc (270:90:1 and 2);
\draw[dashed] (15,-2) arc (-90:90:1 and 2);
\draw (15,-2) arc (-90:90:2 and 2);
\draw (20,-2) arc (-90:-270:2 and 2);
\draw(20,0) ellipse (1 and 2);
\end{scope}

\begin{scope}[xshift=-2cm, yshift= -6cm, decoration={markings, mark=at
     position 0.5 with {\arrow{>}}},postaction={decorate}]
\draw (0,-2) arc (270:90:1 and 2);
\draw[dashed] (0,-2) arc (-90:90:1 and 2);
\fill[gray] (3,0) ellipse (1 and 2);
\draw[postaction= {decorate}] (3,-2) arc (270:90:1 and 2);
\draw[dashed] (3,-2) arc (-90:90:1 and 2);
\draw (6,0) ellipse (1 and 2);
\draw (0,-2) -- (6,-2);
\draw (0,2) -- (6,2);
\node at (8,0) {$=$};
\draw (10,-2) arc (270:90:1 and 2);
\draw[dashed] (10,-2) arc (-90:90:1 and 2);
\draw (10,-2) arc (-90:90:2 and 2);
\draw (15,-2) arc (-90:-270:2 and 2);
\draw(15,0) ellipse (1 and 2);
\fill (15,0) ellipse (2pt and 4pt);
\node at (17,0) {$-$};
\fill (19,0) ellipse (2pt and 4pt);b
\draw (19,-2) arc (270:90:1 and 2);
\draw[dashed] (19,-2) arc (-90:90:1 and 2);
\draw (19,-2) arc (-90:90:2 and 2);
\draw (24,-2) arc (-90:-270:2 and 2);
\draw(24,0) ellipse (1 and 2);
\end{scope} 
\end{tikzpicture}

\vspace{0.7cm}
\begin{tikzpicture}[scale=0.4]
\begin{scope}[xshift=0cm, yshift= 0cm, decoration={markings, mark=at
     position 0.5 with {\arrow{>}}},postaction={decorate}]
\draw (-2,0) -- +(1,0);
\draw (1,0) -- +(1,0);
\draw[dashed] (-1,0) .. controls +(1,0.5) and +(-1,0.5).. +(2,0);
\draw (-1,0) .. controls +(1,-0.5) and +(-1,-0.5).. +(2,0);
\draw (-2,3) -- +(1,0);
\draw (1,3) -- +(1,0);
\draw (-1,3) .. controls +(1,0.5) and +(-1,0.5).. +(2,0);
\draw (-1,3) .. controls +(1,-0.5) and +(-1,-0.5).. +(2,0);
\draw (-2,0) -- +(0,3);
\draw[postaction={decorate}] (-1,0) -- +(0,3);
\draw[postaction={decorate}] (1,3) -- +(0,-3);
\draw (2,0) -- +(0,3);
\end{scope}
\node at (3,1.5) {$=$};
\begin{scope}[xshift=6cm, yshift= 0cm, decoration={markings, mark=at
     position 0.5 with {\arrow{>}}},postaction={decorate}]
\draw (-2,0) -- +(1,0);
\draw (1,0) -- +(1,0);
\draw[dashed] (-1,0) ..controls +(1,0.5) and +(-1,0.5).. +(2,0);
\draw (-1,0) ..controls +(1,-0.5) and +(-1,-0.5).. +(2,0);
\draw (-2,3) -- +(1,0);
\draw (1,3) -- +(1,0);
\draw (-1,3) ..controls +(1,0.5) and +(-1,0.5).. +(2,0);
\draw (-1,3) ..controls +(1,-0.5) and +(-1,-0.5).. +(2,0);
\draw (-2,0) -- +(0,3);
\draw[postaction={decorate}] (-1,0) .. controls +(0,1.5) and +(0,1.5).. +(2,0);
\draw[postaction={decorate}] (1,3) .. controls +(0,-1.5) and +(0,-1.5).. +(-2,0);
\draw (2,0) -- +(0,3);
\fill (0,3) circle (2pt and 2pt);
\end{scope}
\node at (9,1.5) {$-$}; 
\begin{scope}[xshift=12cm, yshift= 0cm, decoration={markings, mark=at
     position 0.5 with {\arrow{>}}},postaction={decorate}]
\draw (-2,0) -- +(1,0);
\draw (1,0) -- +(1,0);
\draw[dashed] (-1,0).. controls +(1,0.5) and +(-1,0.5).. +(2,0);
\draw (-1,0) ..controls +(1,-0.5) and +(-1,-0.5).. +(2,0);
\draw (-2,3) -- +(1,0);
\draw (1,3) -- +(1,0);
\draw (-1,3).. controls +(1,0.5) and +(-1,0.5).. +(2,0);
\draw (-1,3).. controls +(1,-0.5) and +(-1,-0.5).. +(2,0);
\draw (-2,0) -- +(0,3);
\draw[postaction={decorate}] (-1,0) .. controls +(0,1.5) and +(0,1.5).. +(2,0);
\draw[postaction={decorate}] (1,3) .. controls +(0,-1.5) and +(0,-1.5).. +(-2,0);
\draw (2,0) -- +(0,3);
\fill (0,0) circle (2pt and 2pt);
\end{scope} 
\end{tikzpicture}

\vspace{0.7cm}
\begin{tikzpicture}[scale=0.32]
\begin{scope}[xshift=0cm, yshift= 0cm]
\draw (0,0) -- (2,0);
\draw (2,0) -- (1,1);
\draw (1,1) -- (-1,1); 
\draw (-1,1) -- (0,0);
\draw (0,0) -- +(-0.5,-0.5);
\draw (2,0) -- +(0.7,-0.3);
\draw (1,1) -- +(0.5,0.5); 
\draw (-1,1) -- +(-0.7,0.3);
\draw (0,-4) -- (2,-4);
\draw[dashed] (2,-4) -- (1,-3);
\draw[dashed] (1,-3) -- (-1,-3); 
\draw (-1,-3) -- (-0.5,-3.5);
\draw[dashed] (-0.5,-3.5) -- (0,-4);
\draw (0,-4) -- ++(-0.5,-0.5) -- +(0,4);
\draw (2,-4) -- ++(0.7,-0.3)--+(0,4);
\draw[dashed] (1,-3) -- ++(0.5,0.5)--+(0,3);
\draw (1.5,0.5) -- (1.5,1.5); 
\draw (-1,-3) -- ++(-0.7,0.3)--+(0,4);
\draw (0,-4) -- +(0,4);
\draw (2,-4) -- +(0,4);
\draw[dashed] (1,-3) -- +(0,3);
\draw (1,0)-- +(0,1);
\draw (-1,-3) -- +(0,4);
\end{scope}
\node at (3.9,-2) {$=-$};
\begin{scope}[xshift=7cm, yshift= 0cm, decoration={markings, mark=at
     position 0.5 with {\arrow{>}}},postaction={decorate}]
\fill[gray, opacity =0.5] (-1,1) -- (0,0) arc (180:225:1) -- +(-1,1) arc (225:180:1)-- cycle;
\fill[gray,opacity =0.5] (1,1) -- (2,0) arc (0:-135:1) -- +(-1,1) arc(-135:0:1) --cycle;
\fill[gray,opacity=0.5] (-1,-3) -- (0,-4) arc (180:45:1) -- +(-1,1) arc (45:180:1)-- cycle;
\fill[gray,opacity=0.5] (1,-3) -- (2,-4) arc (0:45:1) -- +(-1,1) arc(45:0:1) --cycle;
\draw (0,0) -- (2,0);
\draw (2,0) -- (1,1);
\draw (1,1) -- (-1,1); 
\draw (-1,1) -- (0,0);
\draw (0,0) -- +(-0.5,-0.5);
\draw (2,0) -- +(0.7,-0.3);
\draw (1,1) -- +(0.5,0.5); 
\draw (-1,1) -- +(-0.7,0.3);
\draw (0,-4) -- (2,-4);
\draw[dashed] (2,-4) -- (1,-3);
\draw[dashed] (1,-3) -- (-1,-3); 
\draw (-1,-3) -- (-0.5,-3.5);
\draw[dashed] (-0.5,-3.5) -- (0,-4);
\draw (0,-4) -- ++(-0.5,-0.5) -- +(0,4);
\draw (2,-4) -- ++(0.7,-0.3)--+(0,4);
\draw[dashed] (1,-3) -- ++(0.15,0.15);
\draw (1,-3) + (0.15,0.15) --++(0.5,0.5) -- +(0,1.7);
\draw[dashed] (1.5,0.5) -- +(0,-1.3);
\draw (1.5,0.5) -- (1.5,1.5); 
\draw (-1,-3) -- ++(-0.7,0.3)--+(0,4);
\draw (0,-4) arc (180:0:1);
\draw (0,0) arc (-180:0:1);
\draw[dashed] (1,1) arc (0:-180:1);
\draw (-1,-3) arc (180:120:1);
\draw[dashed] (1,-3) arc (0:120:1);
\draw (0,-3)++(45:1) -- +(1,-1);
\draw (0,1)++(-135:1) -- +(1,-1);
\end{scope}
\node at (10.9,-2) {$-$};
\begin{scope}[xshift=14cm, yshift= 0cm, decoration={markings, mark=at
     position 0.5 with {\arrow{>}}},postaction={decorate}]
\fill[gray,opacity=0.5] (-1,1) ..controls +(0,-1.5) and +(-0.1,0).. (-0.5,-0.8) -- ++(2,0).. controls +(-0.1,0) and +(0,-1.5) .. (1,1);
\fill[gray,opacity=0.5] (0,0) ..controls +(0,-0.8) and +(0.1,0).. (-0.5,-0.8) -- ++(2,0).. controls +(0.1,0) and +(0,-0.8) .. (2,0);
\fill[gray,opacity=0.5] (-1,-3) ..controls +(0,0.8) and +(-0.1,0).. ++(0.5,0.8) -- ++(2,0).. controls +(-0.1,0) and +(0,0.8) .. (1,-3);
\fill[gray,opacity=0.5] (0,-4) ..controls +(0,1.5) and +(0.1,0).. (-0.5,-2.2) -- ++(2,0).. controls +(0.1,0) and +(0,1.5) .. (2,-4);
\draw (0,0) -- (2,0);
\draw (2,0) -- (1,1);
\draw (1,1) -- (-1,1); 
\draw (-1,1) -- (0,0);
\draw (0,0) -- +(-0.5,-0.5);
\draw (2,0) -- +(0.7,-0.3);
\draw (1,1) -- +(0.5,0.5); 
\draw (-1,1) -- +(-0.7,0.3);
\draw (0,-4) -- (2,-4);
\draw[dashed] (2,-4) -- (1,-3);
\draw[dashed] (1,-3) -- (-1,-3); 
\draw (-1,-3) -- (-0.5,-3.5);
\draw[dashed] (-0.5,-3.5) -- (0,-4);
\draw (0,-4) -- ++(-0.5,-0.5) -- +(0,4);
\draw (2,-4) -- ++(0.7,-0.3)--+(0,4);
\draw[dashed] (1,-3) -- ++(0.5,0.5)--+(0,0.3);
\draw (1.5,-2.2) -- (1.5,-0.8);
\draw[dashed] (1.5,-0.8) -- (1.5,0.5);
\draw (1.5,0.5)-- (1.5,1.5 );
\draw (1.5,0.5) -- (1.5,1.5); 
\draw (-1,-3) -- ++(-0.7,0.3)--+(0,4);
\draw[dashed] (1,-3) ..controls +(0,0.8) and +(-0.1,0) .. ++(0.5,0.8).. controls +(0.1,0) and +(0,1.5) .. (2,-4);
\draw (-1,-3) ..controls +(0,0.8) and +(-0.1,0) .. ++(0.5,0.8).. controls +(0.1,0) and +(0,1.5) .. (0,-4);
\draw (1,1) ..controls +(0,-1.5)  and +(-0.1,0) .. (1.5,-0.8).. controls +(0.1,0) and +(0,-0.8) .. (2,0);
\draw (-1,1) ..controls +(0,-1.5) and +(-0.1,0) .. (-0.5,-.8).. controls +(0.1,0) and +(0,-0.8) .. (0,0);
\draw (-0.5,-0.8)-- +(2,0);
\draw (-0.5,-2.2)-- +(2,0);
\end{scope} 
\end{tikzpicture}

\vspace{0.7cm}
\begin{tikzpicture}[scale=0.5]
\input{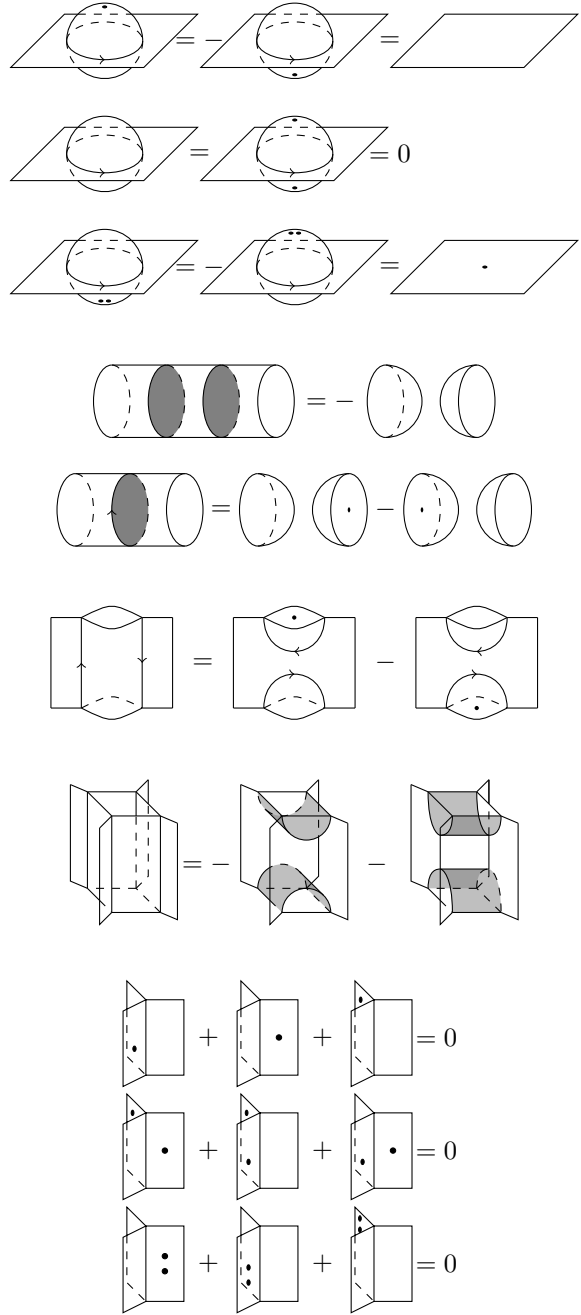} 
\end{tikzpicture}
  \caption{Local relations for 2-morphism in $\mathbb{WT}$. The first 3 lines are called bubbles relations, the 2 next are called bamboo relations, the one after digon relation, then we have the square relation and the 3 last ones are the dots migration relations.}
  \label{fig:localrel}
\end{figure}

This method allows us to define a new graded 2-category $\mathbb{WT}$. Its objects and its 1-morphisms are the ones of the 2-category $\mathcal{WT}$ while its 2-morphisms-spaces are the ones of $\mathcal{WT}$ mod out like in the last paragraph. One should notice that a $(w_b,w_t)$-foam can always be deformed into a $(\mathrm{tr}(\bar{w_b}w_t),\emptyset)$-foam and vice-versa. Khovanov's results restated in this language give that if $w_b$ and $w_t$ are $(\epsilon_0,\epsilon_1)$-web-tangles, the graded dimension of $\hom_{\mathbb{WT}}(w_t,w_b)$ is given by $\kup{\mathrm{tr}(\bar{w_b}w_t)}\cdot q^{l(\epsilon_0)+l(\epsilon_1)}$. Note that when $\epsilon_1=\emptyset$, there is no need to take the closure, because $w_b\bar{w_t}$ is already a closed web. The shift by $l(\epsilon_0)+l(\epsilon_1)$ comes from the fact that $\chi(\mathrm{tr}(\bar{w_b}w_t)) = \chi(w_t)+\chi(w_b) - (l(\epsilon_0)+l(\epsilon_1))$.
\subsection{The Kuperberg-Khovanov algebra $K^\epsilon$}
\label{sec:algebra-k_s}
We want to extend the Khovanov TQFT to the 0-dimensional objects \ie to build a 2-functor from the 2-category $\mathcal{WT}$ to the 2-category of algebras. We follow the methodology of \cite{MR1928174} and we start by defining the image of the $0$-objects~: they are the algebras $K^\epsilon$. This can be compared with \cite{2012arXiv1206.2118M}.
\begin{dfn}
  Let $\epsilon$ be an admissible finite sequence of signs. We define $\tilde{K}^\epsilon$ to be the full sub-category of $\hom_{\mathbb{WT}}(\emptyset,\epsilon)$ whose objects are non-elliptic $\epsilon$-webs. This is a graded $\QQ$-algebroid. We recall that a $k$-algebroid is a $k$-linear category. This can be seen as an algebra by setting~:
\[K^\epsilon = \bigoplus_{(w_b,w_t)\in (\mathrm{ob}(\tilde{K}^\epsilon))^2} \hom_{\mathbb{WT}}(w_b,w_t)\]
and the multiplication on $K^\epsilon$ is given by the composition of morphisms in $\tilde{K}_\epsilon$ whenever it's possible and by zero when it's not. We will denote $\tensor[_{w_1}]{K}{^{\epsilon}_{w_2}}\eqdef \hom_{\mathbb{WT}}(w_2,w_1)$. This is a unitary algebra because of proposition \ref{prop:NEFinite}. The unite element is $\sum_{w\in \mathrm{ob}(\tilde{K}_\epsilon)} 1_w$. Suppose $\epsilon$ is fixed, for $w$ a non-elliptic $\epsilon$-web, we define $P_w$ to be the left $K^\epsilon$-module~:
\[
P_w=\bigoplus_{w'\in\mathrm{ob}(\tilde{K}_\epsilon)}\hom_{\mathbb{WT}}(w,w') = \bigoplus_{w'\in\mathrm{ob}(\tilde{K}_\epsilon)} \tensor[_{w'}]{K}{^\epsilon_{w}}.
\]The structure of module is given by composition on the left.
\end{dfn}
For a given $\epsilon$, the modules $P_w$ are all projective and we have the following decomposition in the category of left $K^\epsilon$-modules~:
\[
K^\epsilon=\bigoplus_{w\in \mathrm{ob}(\tilde{K}_\epsilon)} P_w.
\]
\begin{prop}
Let $\epsilon$ be an admissible sequence of signs, and $w_1$ and $w_2$ two non-elliptic $\epsilon$-webs, then the graded dimension of $\hom_{K^\epsilon}(P_{w_1}, P_{w_2})$ is given by $\kup{(\bar{w_1}w_2)}\cdot q^{l(\epsilon)}$.
\end{prop}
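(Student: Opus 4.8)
The statement is essentially a formal consequence of the fact that $P_w$ is the projective module cut out by an idempotent of $K^\epsilon$, combined with Khovanov's computation of the graded dimensions of the $\hom$-spaces of $\mathbb{WT}$ recalled at the end of Section~\ref{sec:khovanov-tqft-web}. The plan is as follows.

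First I would record that, for a non-elliptic $\epsilon$-web $w$, the identity foam $1_w\in \tensor[_w]{K}{^\epsilon_w}$ is a homogeneous idempotent of degree $0$ (its degree is $\chi(w)+\chi(w)-2\chi(w)=0$), and that $P_w = K^\epsilon 1_w$ as graded left $K^\epsilon$-modules: indeed, by the very definition of the multiplication of $K^\epsilon$, left-multiplying $1_w$ by all of $K^\epsilon$ picks out exactly $\bigoplus_{w'}\tensor[_{w'}]{K}{^\epsilon_w} = P_w$. Here, and throughout, one uses Proposition~\ref{prop:NEFinite} to know that $K^\epsilon$ is a genuine unital algebra.

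Next I would invoke the standard description of morphisms between projectives attached to idempotents: for idempotents $e_1,e_2$ of a ring $R$, the map $f\mapsto f(e_1)$ is an isomorphism $\hom_R(Re_1,Re_2)\xrightarrow{\ \sim\ } e_1Re_2$, with inverse $x\mapsto (ae_1\mapsto ax)$, well-definedness being checked via $f(e_1)=f(e_1^2)=e_1f(e_1)$. Applied to $R=K^\epsilon$ and $e_i=1_{w_i}$, and using that $1_{w_i}$ is homogeneous of degree $0$ so that the isomorphism is degree-preserving, this gives an isomorphism of graded vector spaces $\hom_{K^\epsilon}(P_{w_1},P_{w_2})\cong 1_{w_1}K^\epsilon 1_{w_2}$. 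Again by the definition of the multiplication of $K^\epsilon$, the corner $1_{w_1}K^\epsilon 1_{w_2}$ is precisely $\tensor[_{w_2}]{K}{^\epsilon_{w_1}} = \hom_{\mathbb{WT}}(w_1,w_2)$.

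Finally I would apply Khovanov's dimension formula with $\epsilon_0=\epsilon$ and $\epsilon_1=\emptyset$ (so $l(\epsilon_1)=0$ and no closure is needed, $\bar{w_2}w_1$ being already a closed web), obtaining $\dim_q\hom_{\mathbb{WT}}(w_1,w_2) = \kup{\bar{w_2}w_1}\cdot q^{l(\epsilon)}$. Since the Kuperberg bracket is invariant under isotopy and under reflection with reversal of orientations, $\kup{\bar{w_2}w_1} = \kup{\overline{\bar{w_1}w_2}} = \kup{\bar{w_1}w_2}$, which is the claim. There is no substantive obstacle: the only points to watch are that the idempotent $1_w$ is homogeneous of degree $0$, so that the $\hom$/corner identification is graded, and the left/right--top/bottom bookkeeping in $\mathbb{WT}$, which the reflection symmetry of $\kup{\cdot}$ renders immaterial for the final formula.
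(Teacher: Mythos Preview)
Your proposal is correct and follows essentially the same route as the paper: identify $\hom_{K^\epsilon}(P_{w_1},P_{w_2})$ with the corner $1_{w_1}K^\epsilon 1_{w_2}$ via the image of $1_{w_1}$, then apply Khovanov's graded-dimension formula. One small bookkeeping slip: with the paper's conventions one actually gets $1_{w_1}K^\epsilon 1_{w_2}=\tensor[_{w_1}]{K}{^\epsilon_{w_2}}=\hom_{\mathbb{WT}}(w_2,w_1)$ rather than $\tensor[_{w_2}]{K}{^\epsilon_{w_1}}$, so no appeal to reflection symmetry is needed; but as you note, the symmetry of $\kup{\cdot}$ makes this immaterial for the final formula.
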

\begin{proof}
  An element of  $\hom_{K^\epsilon}(P_{w_1}, P_{w_2})$ is completely determined by the image of $1_{w_1}$ and this element can be sent on any element of $\hom_{\mathbb{WT}}(P_{w_2}, P_{w_1})$, and $\dim_q(\hom_{\mathbb{WT}}(P_{w_1}, P_{w_2}))=\kup{(\bar{w_1}w_2)}\cdot q^{l(\epsilon)}$.
\end{proof}
In what follows we will prove that some modules are indecomposable, they all have finite dimension over $\QQ$ hence it's enough to show that their rings of endomorphisms contain no non-trivial idempotents. It appears that an idempotent must have degree zero, so we have the following lemma~:
\begin{lem}\label{lem:monic2indec}
  If $w$ is a non-elliptic $\epsilon$-web such that $\kup{\bar{w}w}$ is monic of degree $l(\epsilon)$, then the graded $K^\epsilon$-module $P_w$ is indecomposable.
\end{lem}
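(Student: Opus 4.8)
The plan is to reduce indecomposability of $P_w$ to a statement about the degree-zero part of its endomorphism ring, and then to use the hypothesis on $\kup{\bar w w}$ to pin down that part completely. First I would recall that $P_w$ is finite-dimensional over $\QQ$, so by the standard idempotent-lifting argument it suffices to show that $\End_{K^\epsilon}(P_w)$ contains no idempotent other than $0$ and $1_w$. By the previous proposition, $\dim_q \End_{K^\epsilon}(P_w) = \kup{\bar w w}\cdot q^{l(\epsilon)}$, which by hypothesis is monic of degree $l(\epsilon)$; since $\kup{\bar w w}$ is a symmetric Laurent polynomial with non-negative coefficients whose top coefficient (in $q^{l(\epsilon)}$, i.e. the leading term of $\kup{\bar w w}\cdot q^{l(\epsilon)}$) equals $1$, the lowest term is $q^{-l(\epsilon)}\cdot q^{l(\epsilon)} = q^0$ with coefficient $1$ as well. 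Hence the degree-zero graded piece $\End_{K^\epsilon}(P_w)_0$ is one-dimensional over $\QQ$.

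Next I would argue that any idempotent $e \in \End_{K^\epsilon}(P_w)$ is homogeneous of degree $0$. Since $\End_{K^\epsilon}(P_w)$ is a graded algebra and $e = e^2$, writing $e = \sum_i e_i$ in homogeneous components and comparing the components of lowest and highest degree forces those extreme components to be idempotent; but a graded algebra concentrated in degrees $\ge 0$ with one-dimensional degree-zero part has its only homogeneous idempotents in degree $0$ (an idempotent in strictly positive degree would be nilpotent, and in strictly negative degree there is nothing). A short induction then shows $e = e_0 \in \End_{K^\epsilon}(P_w)_0$. Concretely: the identity $1_w$ lies in degree $0$, so $\End_{K^\epsilon}(P_w)$ is a non-negatively graded $\QQ$-algebra, its degree-zero part is a local ring (being one-dimensional, it is just $\QQ$), and the graded Nakayama/idempotent argument gives that the only idempotents of the whole ring are $0$ and $1_w$.

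Finally, combining the two points: $\End_{K^\epsilon}(P_w)_0 = \QQ\cdot 1_w$, and every idempotent lies in this one-dimensional space, so the only idempotents are $0 = 0\cdot 1_w$ and $1_w = 1\cdot 1_w$. Therefore $P_w$ has no non-trivial direct-sum decomposition, i.e. $P_w$ is indecomposable. The main obstacle I anticipate is the clean justification that idempotents must be homogeneous of degree $0$ and that a non-negatively graded finite-dimensional algebra with $1$-dimensional degree-zero component has no non-trivial idempotents — this is where one must be careful that the grading on $\End_{K^\epsilon}(P_w)$ is genuinely bounded below by $0$ (which follows from $\kup{\bar w w}$ being symmetric of degree $l(\epsilon)$, so that $\kup{\bar w w}\cdot q^{l(\epsilon)}$ has lowest term $q^0$) and that the degree-zero subalgebra really is just $\QQ 1_w$ rather than merely one-dimensional as a vector space in some other way; but the dimension count from the proposition settles exactly this.
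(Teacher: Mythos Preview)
Your argument is correct and follows essentially the same route as the paper: both reduce to showing there are no non-trivial idempotents in $\End_{K^\epsilon}(P_w)$, then use the graded dimension formula together with the symmetry and monicity of $\kup{\bar w w}$ to conclude that the degree-zero piece is one-dimensional, spanned by $1_w$. You supply more detail than the paper does on why an idempotent must live in degree $0$ (via the non-negative grading and the local degree-zero part), whereas the paper simply asserts this; otherwise the two proofs coincide.
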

\begin{proof}
  This follows from previous discussion~: if $\hom_{K^\epsilon}(P_w,P_w)$ contained a-non trivial idempotent, there would be at least two linearly independent elements of degree 0, but $\dim((\hom_{\mathbb{WT}}(P_{w}, P_{w})_0) = a_{-l(\epsilon)}$ if we write $\kup{\bar{w}w}=\sum_{i\in \ZZ}a_iq^i$ but as $\kup{\bar{w}w}$ is symmetric (in $q$ and $q^{-1}$) of degree $l(\epsilon)$ and monic, $a_{-l(\epsilon)}$ is equal to 1 and this is a contradiction.
\end{proof}
We have a similar lemma to prove that two modules are not isomorphic.
\begin{lem}\label{lem:0monic2noiso}
  If $w_1$ and $w_2$ are two non-elliptic $\epsilon$-webs such that $\kup{\bar{w_1}w_2}$ has degree strictly smaller than $l(\epsilon)$, then the graded $K^\epsilon$-modules $P_{w_1}$  and $P_{w_2}$ are not isomorphic.
\end{lem}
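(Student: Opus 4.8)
The plan is to follow the same strategy as in Lemma~\ref{lem:monic2indec}, replacing ``no non-trivial idempotent in the endomorphism ring'' by ``no non-zero degree-$0$ homomorphism $P_{w_1}\to P_{w_2}$'', and to deduce the latter from a graded-dimension count. The point is that an isomorphism of graded $K^\epsilon$-modules is in particular a homogeneous module map of degree $0$, so it suffices to show that $\hom_{K^\epsilon}(P_{w_1},P_{w_2})$ has trivial degree-$0$ part.

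Concretely, I would argue as follows. First, since $w_1$ and $w_2$ are $\epsilon$-webs, the mirror $\bar{w_1}$ is an $(\emptyset,\epsilon)$-web-tangle and $\bar{w_1}w_2$ is a \emph{closed} web, so by Proposition~\ref{prop:Kup} the bracket $\kup{\bar{w_1}w_2}$ is a symmetric Laurent polynomial; write $\kup{\bar{w_1}w_2}=\sum_{i\in\ZZ}a_iq^i$. Second, by the proposition computing the graded dimension of $\hom$-spaces between the $P_w$'s, we have $\dim_q\hom_{K^\epsilon}(P_{w_1},P_{w_2})=\kup{\bar{w_1}w_2}\cdot q^{l(\epsilon)}$, hence (exactly as in the proof of Lemma~\ref{lem:monic2indec}) the degree-$0$ component has dimension $a_{-l(\epsilon)}$. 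Third, the hypothesis is that $\kup{\bar{w_1}w_2}$ has degree strictly smaller than $l(\epsilon)$; by the $q\leftrightarrow q^{-1}$ symmetry this forces $a_{l(\epsilon)}=a_{-l(\epsilon)}=0$. Therefore $\hom_{K^\epsilon}(P_{w_1},P_{w_2})_0=0$, so there is no non-zero degree-$0$ map $P_{w_1}\to P_{w_2}$, and in particular no isomorphism; thus $P_{w_1}$ and $P_{w_2}$ are not isomorphic as graded $K^\epsilon$-modules.

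I do not expect any real obstacle here: once the graded-dimension formula for $\hom_{K^\epsilon}(P_{w_1},P_{w_2})$ is available, the lemma is a one-line consequence, just as Lemma~\ref{lem:monic2indec} was. The only points that require a (minor) word of care are the use of the symmetry of the Kuperberg bracket --- it is what turns a bound on the \emph{top} degree of $\kup{\bar{w_1}w_2}$ into the vanishing of its coefficient at $q^{-l(\epsilon)}$ --- and the elementary observation that an isomorphism in the category of graded modules must be homogeneous of degree $0$. If one later needs the modules to be non-isomorphic even after an arbitrary grading shift, one can refine the count by also considering $\hom_{K^\epsilon}(P_{w_2},P_{w_1})$ and locating the degrees in which an isomorphism and its inverse could sit, but the degree-$0$ statement above is what is used in the sequel.
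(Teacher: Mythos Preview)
Your proof is correct and is essentially the paper's argument: both use the graded-dimension formula $\dim_q\hom_{K^\epsilon}(P_{w_1},P_{w_2})=\kup{\bar{w_1}w_2}\cdot q^{l(\epsilon)}$ together with the symmetry of the Kuperberg bracket to see that this $\hom$-space is concentrated in strictly positive degrees. The only cosmetic difference is that the paper phrases it via the pair $(f,g)$ with $f\circ g=1_{P_{w_1}}$, observing that both $f$ and $g$ (using $\kup{\bar{w_1}w_2}=\kup{\bar{w_2}w_1}$) are forced to have strictly positive degree, so their composition cannot have degree~$0$; this is exactly the ``refinement'' you allude to in your last paragraph, and it already rules out isomorphisms up to an arbitrary grading shift.
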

\begin{proof}
  If they were isomorphic, there would exist two morphisms $f$ and $g$ such that $f\circ g=1_{P_{w_1}}$ and therefore $f\circ g$ would have degree zero. The hypothesis made implies that $f$ and $g$ (because $\kup{\bar{w_1}w_2} = \kup{\bar{w_2}w_1}$) have positive degree so that the degree of their composition is as well positive.
\end{proof}
\begin{req}
  The way we constructed the algebra $K^\epsilon$ is very similar to the construction of $H^n$ in \cite{MR1928174}. Using the same method we can finish the construction of a $0+1+1$ TQFT~:
  \begin{itemize}
  \item If $\epsilon$ is an admissible sequence of signs, then $\F(\epsilon) = K^\epsilon$.
  \item If $w$ is a $(\epsilon_1,\epsilon_2)$-web-tangle with $\epsilon_1$ and $\epsilon_2$ admissible, then 
\[\F(w) = \bigoplus_{\substack{u\in \mathrm{ob}(\tilde{K}^{\epsilon_1}) \\ v\in \mathrm{ob}(\tilde{K}^{\epsilon_2})}} \F(\bar{u}wv),
\] and it has a structure of graded $K^{\epsilon_1}$-module-$K^{\epsilon_2}$. Note that if $w$ is a non-elliptic $\epsilon$-web, then $\F(w)=P_w$.
\item If $w$ and $w'$ are two $(\epsilon_1,\epsilon_2)$-web-tangles, and $f$ is a $(w,w')$-foam, then we set 
\[
\F(f) = \sum_{\substack{u\in \mathrm{ob}(\tilde{K}^{\epsilon_1}) \\ v\in \mathrm{ob}(\tilde{K}^{\epsilon_2})}} \F(\tensor[_{\bar{u}}]{f}{_v}),
\] where $\tensor[_{\bar{u}}]{f}{_v}$ is the foam $f$ with glued on its sides $\bar{u}\times[0,1]$ and $v\times [0,1]$. This is a map of  graded $K^{\epsilon_1}$-modules-$K^{\epsilon_2}$.
  \end{itemize}
We encourage the reader to have a look at this beautiful construction for the $sl_2$ case in \cite{MR1928174}. 
\end{req}
\subsection{A decomposable module}
\label{sec:decomposable-module}
We mention that modules over $K^\epsilon$ are studied independently in \cite{2012arXiv1206.2118M} where they compute the split Grothendieck group of $K^{\epsilon}$. However the problem to give an explicit basis of indecomposable projective module remains open. 
As we discussed before, to prove that a module is decomposable, it's enough to show that its ring of endomorphisms contains a non-trivial idempotent. In this subsection we show that a certain module $P_w$ is decomposable. This is actually already known (see for example \cite{MR2457839}), but we give here details of the calculus.

In what follows we set $\epsilon$ to be the sequence $(+,-,-,+,+,-,-,+,+,-,-,+)$ (so that $l(\epsilon)=12$) and $w$ and $w_0$ the $\epsilon$-webs given on figure~\ref{fig:thewebw}.
\begin{figure}[h]
  \centering
\begin{tikzpicture}[scale= 0.5]
  \input{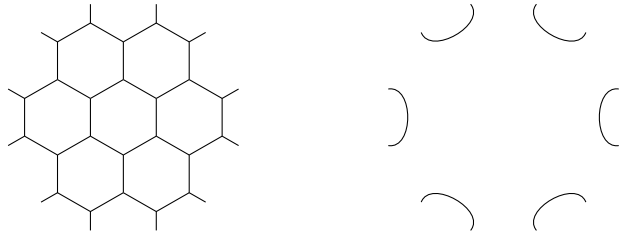}
\end{tikzpicture}
  \caption{The $\epsilon$-webs $w$ (on the left) and $w_0$ (on the right), to fit in formal context of the 2-category one should stretch the outside edges to horizontal line below the whole picture, we draw it this way to enjoy more symmetry. To simplify we didn't draw the arrows.}
  \label{fig:thewebw}
\end{figure}
We will as well need some specific foams we describe them via movies on figure~\ref{fig:fgfoams} (the elementary movies are birth, death and saddle as for classical surfaces and we add zip and unzip, see \cite{MR2100691} for details).
\begin{figure}[h]
  \centering
  \begin{tikzpicture}[scale =0.8]
\begin{scope}[xshift=-8cm, yshift= -2cm]
\draw[draw=  gray, line width = 2mm] (-2,0) --  (10,0); 
\draw [draw = gray, line width = 2mm] (10,4) -- (-2,4); 
\draw[draw=  gray, very thick] (-2,0) -- (-2,4);
\draw[draw=  gray, very thick] (2,0) -- (2,4);
\draw[draw=  gray, very thick] (6,0) -- (6,4);
\draw[draw=  gray, very thick] (10,0) -- (10,4);
\draw[draw= white, dotted, line width =1.2mm] (-2,0) -- (10,0);
\draw[draw= white, dotted, line width = 1.2mm] (-2,4) -- (10,4);
\end{scope}
 \begin{scope}[scale= 0.5]
   \foreach \n/\a in {a/30,b/90,c/150,d/210,e/270,f/330}
{
\draw (\a:1) -- (\a:2);
\draw (\a:1) -- (\a+60:1);
\draw (\a:2) -- ++(\a-60:1) -- ++(\a-120:1) -- ++(\a-180:1);
\draw (\a:2) ++(\a-60:1) -- ++(\a:0.5);
\draw (\a:2) ++(\a-60:1) ++(\a-120:1) -- ++(\a-60:0.5);
}
\end{scope}

\begin{scope}[xshift =-4cm, scale=0.5]
   \foreach \n/\a in {a/30,b/90,c/150,d/210,e/270,f/330}
{
\draw (\a+30:1.5) circle (0.5);
\draw (\a:2.5)++(\a+60:1).. controls (\a+10:2.5) and (\a+50:2.5) .. +(\a+120:1.5);
}
\end{scope}

\begin{scope}[xshift =-8cm, scale =0.5]
   \foreach \n/\a in {a/30,b/90,c/150,d/210,e/270,f/330}
{
\draw (\a:2.5)++(\a+60:1).. controls (\a+10:2.5) and (\a+50:2.5) .. +(\a+120:1.5);
}
\end{scope}
  \end{tikzpicture}
  \caption{The $(w_0,w)$-foam $f$ (it means that $f$ belongs to $\hom_{\mathbb{WT}}(w,w_0)$) is described by the movie from left to right and the $(w,w_0)$-foam $g$ is described by the movie from right to left (we have $g=\bar{f}$). We specify for $f$~: at the first step we perform 6 births and then, at the second step we zip 12 times, this leads to morphisms of degree 0.}
  \label{fig:fgfoams}
\end{figure}
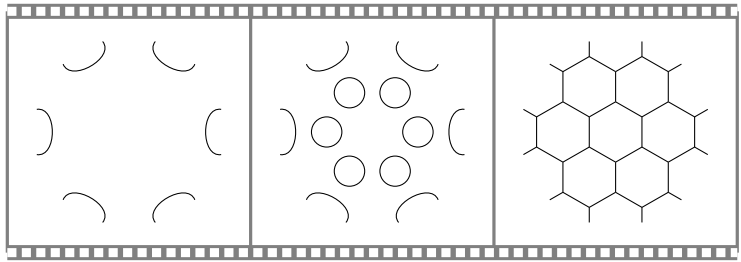
\begin{prop}
  The $K^\epsilon$-module $P_w$ is decomposable.
\end{prop}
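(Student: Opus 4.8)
The plan is to follow the discussion preceding Lemma~\ref{lem:monic2indec}: since $P_w$ is finite-dimensional over $\QQ$, it is enough to exhibit a non-trivial idempotent in $\operatorname{End}_{K^\epsilon}(P_w)=\hom_{\mathbb{WT}}(w,w)$, and such an idempotent must have degree $0$. The idempotent will be built from the two foams of Figure~\ref{fig:fgfoams}: $f\in\hom_{\mathbb{WT}}(w,w_0)$ and $g=\bar f\in\hom_{\mathbb{WT}}(w_0,w)$, both of degree $0$, so that the composite foam $g\circ f\in\hom_{\mathbb{WT}}(w,w)$ also has degree $0$.

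First I would compute the ``sandwiched'' endomorphism $f\circ g\in\hom_{\mathbb{WT}}(w_0,w_0)$. Applying Proposition~\ref{prop:Kup} to $\bar{w_0}w_0$ shows that $\kup{\bar{w_0}w_0}$ is monic of degree $l(\epsilon)=12$, hence, by the graded-dimension formula recalled before Lemma~\ref{lem:monic2indec}, the degree-$0$ part of $\hom_{\mathbb{WT}}(w_0,w_0)$ is one-dimensional, spanned by $1_{w_0}$; therefore automatically $f\circ g=c\cdot 1_{w_0}$ for some $c\in\QQ$, and the whole point becomes to check that $c\neq0$. This is the foam calculus step: in the composite $f\circ g$ the six deaths of $g$ meet the six births of $f$ and the twelve unzips of $g$ meet the twelve zips of $f$, so that, after an isotopy, $f\circ g$ is the identity foam on $w_0$ with twelve local bubble/theta configurations inserted; these are removed one at a time using the bubble relations of Figure~\ref{fig:localrel}, the surgery formula of Figure~\ref{fig:surg} and the theta-foam evaluations of Figures~\ref{fig:thetapre}--\ref{fig:thetaeval}. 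Since dots raise the degree by $2$ and $f\circ g$ has degree $0$, no dotted terms survive, and one is left with $c\,1_{w_0}$; a careful bookkeeping of orientations, of the cyclic orders on the singular circles, and of the sign $\tau(X^2)=-1$ yields an explicit nonzero value of $c$.

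Granting $f\circ g=c\,1_{w_0}$ with $c\neq0$, set $e:=c^{-1}(g\circ f)$. It has degree $0$ and $e\circ e=c^{-2}\,g\circ(f\circ g)\circ f=c^{-1}(g\circ f)=e$, so it is idempotent. It is non-zero, since $g\circ f=0$ would give $(f\circ g)^2=f\circ(g\circ f)\circ g=0$, contradicting $f\circ g=c\,1_{w_0}\neq0$. It is not $1_w$: the identity $f\circ g=c\,1_{w_0}$ exhibits $P_{w_0}$ as a direct summand of $P_w$ via the pair $(c^{-1}f,g)$, so $e=1_w$ would force $g$ and $f$ to be mutually inverse and hence $P_w\cong P_{w_0}$; but $\kup{\bar w w}\neq\kup{\bar{w_0}w_0}$ — as one checks by reducing both with Proposition~\ref{prop:Kup}, e.g.\ their values at $q=1$ differ — so, by the graded-dimension formula again, $P_w\not\cong P_{w_0}$. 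Thus $e$ is a non-trivial idempotent, $P_w=eP_w\oplus(1-e)P_w\cong P_{w_0}\oplus(1-e)P_w$ with $(1-e)P_w\neq0$, and $P_w$ is decomposable.

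The main obstacle is the foam computation of the second paragraph: reducing $f\circ g$ to $c\,1_{w_0}$ is a long but elementary manipulation with the local relations of Figure~\ref{fig:localrel}, and the real danger is an accidental cancellation forcing $c=0$; the whole statement hinges on verifying that the sign contributions do not conspire to kill $c$. By comparison, the bracket computations (that $\kup{\bar{w_0}w_0}$ is monic of degree $12$ and that $\kup{\bar w w}\neq\kup{\bar{w_0}w_0}$) are routine finite applications of Proposition~\ref{prop:Kup}.
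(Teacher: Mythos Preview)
Your strategy coincides with the paper's: use $f$ and $g=\bar f$, deduce $f\circ g=c\,1_{w_0}$ from the one-dimensionality of the degree-$0$ part of $\hom_{\mathbb{WT}}(w_0,w_0)$, set $e=c^{-1}g\circ f$, and exclude $e\in\{0,1_w\}$ via $\kup{\bar w w}\neq\kup{\bar{w_0}w_0}$ exactly as you outline. The only real content is the value of $c$, and here your sketch does not go through as written.

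In the composite $f\circ g$ the foam $f$ sits at the bottom and $g=\bar f$ at the top, so reading bottom to top one sees $w_0$, then the six births of $f$, then the twelve zips of $f$, then $w$, then the twelve unzips of $g$, then the six deaths of $g$, then $w_0$ again. Thus the births and deaths lie at the two extremities and never meet; the cross-section $w$ genuinely appears in the middle, and $f\circ g$ is \emph{not} ``the identity on $w_0$ with local bubble/theta configurations inserted''. The paper does not attempt a direct simplification of this foam. Instead it regards $f\circ g$ and $1_{w_0}$ as $(\emptyset,\bar{w_0}w_0)$-foams (the latter being six cups $h$) and caps both off with the same test foam $j$ consisting of six caps, each carrying two dots. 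Then $\mathcal F(h\circ j)=(-1)^6=1$, while bubble relations near the caps of $j$ reduce $\mathcal F(f\circ g\circ j)$ to $(-1)^3\mathcal F(t)$, with $t$ the closed foam of Figure~\ref{fig:foamt} (a torus with six internal disks and one dot per sector). Surgering the six sectors produces $3^6$ products of theta-foams, only two of which survive, each equal to $(-1)^3(+1)^3=-1$; hence $\mathcal F(t)=-2$ and $c=2$. This closed-foam pairing is the step your proposal defers, and your proposed route to it via local simplification of $f\circ g$ would not reach it.
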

First we should quote that this web was the counter-example pointed out by Kuperberg and Khovanov \cite{MR1684195} to prove that the web basis is not dual canonical. To show this proposition we just have to find a non-trivial idempotent. 
\begin{proof}
We claim that $e\eqdef\frac12 g\circ f$ is a $(w,w)$-foam whose associated endomorphism is an idempotent different from $0$ and from $1_{P_w}$. To prove this we first show that $f\circ g =2\cdot 1_{w_0}$. First notice that $f\circ g$ belongs to $\hom_{\mathbb{WT}}(w_0,w_0)$ and has degree 0. This space has a graded dimension given by $q^{12}\cdot\kup{\bar{w_0}w_0}= q^{12}\cdot [3]^6$, and this shows that the space of degree 0 endomorphism of $P_{w_0}$ is 1-dimensional and hence $f\circ g$ is a multiple of $1_{w_0}$.  Seen as a foam and forgetting the decomposition of its boundary we can consider $g\circ f$ as $(\emptyset,\bar{w_0}w_0)$-foam and in this context $f\circ g$ is a multiple of $h$, the $(\emptyset,\bar{w_0}w_0)$-foam given by 6 cups. To evaluate the scalar multiple between them, we complete these $(\emptyset,\bar{w_0}w_0)$-foams to obtain closed foams by gluing $j$, the $(\bar{w_0}w_0,\emptyset)$-foam which consists of 6 caps with two dots on each cap. The closed foam $h \circ j$ is a closed foam which consists of 6 spheres with 2 dots on each, hence it's evaluation $\mathcal{F}(h\circ j)$ is equal to $(-1)^6=1$. Now let us evaluate $\mathcal{F}(f\circ g\circ j)$ by using the bubble relations next to the cups of $j$, we have $\mathcal{F}(f\circ g \circ j)=(-1)^3\mathcal{F}(t)$, where $t$ is a torus with 6 disks inside and one dot per section of the torus (see figure~\ref{fig:foamt}). To evaluate $\mathcal{F}(t)$ one can perform surgeries on all portions, this gives us a priori $3^6$ terms with plus signs, all are disjoint union of 6 dotted theta foams just 2 of this terms are non-zero~: and for the two of them 3 theta foams evaluate on $-1$ and $3$ theta foams evaluate on $+1$, so that $\mathcal{F}(f\circ g\circ j) = -\mathcal{F}(t) = 2$. 
\begin{figure}[h!]
  \centering
  \begin{tikzpicture}[scale=0.5]
     \begin{scope}
   [yscale = {1}, xscale={1},decoration={markings, mark=at
     position 0.5 with {\arrow{>}}},postaction={decorate}]
\fill[color = gray] (0,1) ellipse (0.5 and 1);
\fill[color = gray] (2,1) ellipse (0.5 and 1);
\fill[color = gray] (4,1) ellipse (0.5 and 1);
\fill[color = gray] (6,1) ellipse (0.5 and 1);
\fill[color = gray] (8,1) ellipse (0.5 and 1);
\fill[color = gray] (10,1) ellipse (0.5 and 1);
   \draw (0,0) -- (10,0);
   \draw (0,2) -- (10,2);
   \draw[postaction= {decorate}] (0,0) arc (270:90:0.5 and 1);
   \draw[postaction= {decorate}] (4,0) arc (270:90:0.5 and 1);
   \draw[postaction= {decorate}] (8,0) arc (270:90:0.5 and 1);
   \draw[postaction= {decorate}] (2,2) arc (90:270:0.5 and 1);
   \draw[postaction= {decorate}] (6,2) arc (90:270:0.5 and 1);
   \draw[postaction= {decorate}] (10,2) arc (90:270:0.5 and 1);
   \draw[dotted] (0,0) arc (-90:90:0.5 and 1);
\draw[dotted] (2,0) arc (-90:90:0.5 and 1);
\draw[dotted] (4,0) arc (-90:90:0.5 and 1);
\draw[dotted] (6,0) arc (-90:90:0.5 and 1);
\draw[dotted] (8,0) arc (-90:90:0.5 and 1);
\draw[dotted] (10,0) arc (-90:90:0.5 and 1);
\filldraw (1,1) circle (2 pt);
\filldraw (3,1) circle (2 pt);
\filldraw (5,1) circle (2 pt);
\filldraw (7,1) circle (2 pt);
\filldraw (9,1) circle (2 pt);
\filldraw (-1,1) circle (2 pt);
\draw (0,0) .. controls +(-3, 0) and +(0,-1) .. (-3,2).. controls +(0,1)  and +(-5,0).. (5, 6) .. controls +(5,0) and +(0,1) .. (13,2) .. controls +(0,-1) and +(3,0) .. (10,0);
\draw (0,2) .. controls +(-2, 0)  and +(-5,0).. (5, 4) .. controls +(5,0) and +(2,0).. (10,2);
\end{scope}
  \end{tikzpicture}
  \caption{The closed foam $t$.}
  \label{fig:foamt}
\end{figure}
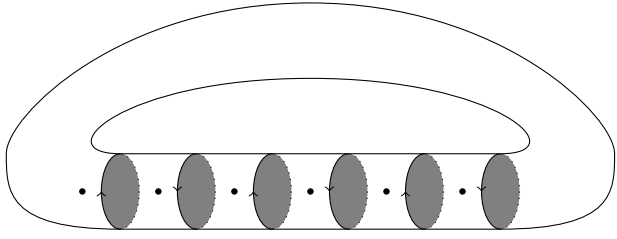
  This gives us that $g\circ f =2\cdot 1_{w_0}$. It is then very easy to check that $e$ is an idempotent~: $e\circ e = \frac14 f\circ g \circ f \circ g = \frac12 f\circ 1_{w_0}\circ g =e$, and it is as well straightforward to check that $e$ is not equal to zero~: $g\circ e\circ f = 2\cdot 1_{w_0}\neq 0$. We now need to show that $e$ is not equal to $1_w$. If it were $f$ and $\frac12g$ would give isomorphism between $P_w$ and $P_{w_0}$. But their spaces of endomorphism don't have the same graded dimensions\footnote{In fact we have $\kup{w\bar{w}}=2q^{-12} + 80q^{-10} + 902q^{-8} + 4604q^{-6} + 13158q^{-4} + 23684q^{-2} + 28612 + 23684q^2 + 13158q^4 + 4604q^6 + 902q^8 + 80q^{10} + 2q^{12}$ and $\kup{w_0\bar{w_0}} =q^{-12} + 6q^{-10} + 21q^{-8} + 50q^{-6} + 90q^{-4} + 126q^{-2} + 141 + 126q^2 + 90q^4 + 50q^6 + 21q^8 + 6q^{10} + q^{12} $. We used Lukas Lewark's program \cite{WD2011Lewark} to compute $\kup{w\bar{w}}$.} so they cannot be isomorphic. This shows that $P_w$ is decomposable and that $P_{w_0}$ is a $K^\epsilon$-sub-module of $P_w$. 
\end{proof}

\section{An indecomposability result}
The aim of this section is to give a rather large family of $\epsilon$-webs whose associated $K^\epsilon$-modules are indecomposable, and pairwise non-isomorphic.
\subsection{Superficial webs, semi-non elliptic webs}
\begin{dfn}
 If $\epsilon$ is an admissible sequence of signs, we denote by $S^\epsilon$ the $\QQ[q,q^{-1}]$-module generated by isotopy classes of $\epsilon$-webs and subjected to the Kuperberg relations (see proposition \ref{prop:Kup}).
\end{dfn}
\begin{prop}[Kuperberg]
  The $\QQ[q,q^{-1}]$-module $S^\epsilon$ is free and freely generated by the non-elliplitic $\epsilon$-webs.
\end{prop}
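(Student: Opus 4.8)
The plan is to establish the two assertions of the statement separately: first that the non-elliptic $\epsilon$-webs generate $S^\epsilon$ over $\QQ[q,q^{-1}]$, and then that they are linearly independent. Combined with finiteness of the set of non-elliptic $\epsilon$-webs (Proposition~\ref{prop:NEFinite}) this yields freeness with the announced basis.

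For the spanning statement, I would induct on the number of trivalent vertices of an $\epsilon$-web $w$. If $w$ is non-elliptic there is nothing to prove. Otherwise $w$ contains a vertexless circle, a digon, or a square; by a suitable Euler-characteristic count, extending the one of Remark~\ref{req:basic-on-web} to web-tangles, such a face always exists, and choosing a minimal one ensures that the local picture around it is precisely one of those appearing in Proposition~\ref{prop:Kup} (four distinct legs for a square, two distinct legs for a digon). Applying the corresponding Kuperberg relation rewrites $w$ as a $\QQ[q,q^{-1}]$-linear combination of $\epsilon$-webs with strictly fewer vertices — the square relation removes four vertices, the digon relation two (contributing a factor $[2]$), the circle relation a loop (contributing a factor $[3]$) — so the induction hypothesis applies to every term; the base case of a vertexless web, which after deleting its circles consists only of boundary arcs and is therefore non-elliptic, is immediate. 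The one delicate point is checking that a minimal elliptic face is ``simple'' enough for the relation to apply verbatim; this is classical and goes back to Kuperberg.

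For linear independence I would pass to the representation-theoretic model of $U_q(sl_3)$, which is also the source of the existence half of Proposition~\ref{prop:Kup}. Write $V_+$ for the standard representation, $V_-=V_+^*$, and $V_\epsilon=V_{\epsilon^1}\otimes\cdots\otimes V_{\epsilon^n}$. The Reshetikhin--Turaev functor sends an $\epsilon$-web to an element of the invariant space $\mathrm{Inv}(V_\epsilon)$, and the Kuperberg relations hold among these invariants, so one gets a well-defined $\QQ[q,q^{-1}]$-linear map $\Phi\colon S^\epsilon\to\mathrm{Inv}(V_\epsilon)$; extending scalars gives a $\QQ(q)$-linear map $\Phi_{\QQ(q)}$. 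By the spanning part $\Phi_{\QQ(q)}$ is onto, so $\dim_{\QQ(q)}\mathrm{Inv}(V_\epsilon)\le N_\epsilon$, where $N_\epsilon$ denotes the number of non-elliptic $\epsilon$-webs. The key input is the reverse inequality: computing $\dim_{\QQ(q)}\mathrm{Inv}(V_\epsilon)$ by decomposing $V_\epsilon$ into irreducibles (equivalently, a Littlewood--Richardson or crystal count), one checks that this dimension equals $N_\epsilon$ by exhibiting a bijection between non-elliptic webs and a suitable family of lattice paths or tableaux — this is Kuperberg's growth algorithm. Granting the equality, $\Phi_{\QQ(q)}$ is a surjection between finite-dimensional spaces of the same dimension out of a set of $N_\epsilon$ generators, hence an isomorphism carrying the non-elliptic webs onto a basis; in particular they are $\QQ(q)$-linearly independent, hence a fortiori $\QQ[q,q^{-1}]$-linearly independent. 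Combined with spanning, this exhibits $S^\epsilon$ as free on the non-elliptic $\epsilon$-webs.

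I expect the crux to be exactly the count $\dim_{\QQ(q)}\mathrm{Inv}(V_\epsilon)=N_\epsilon$: the reduction proving spanning is an elementary, if slightly fiddly, induction, whereas matching the combinatorial enumeration of non-elliptic webs with a representation-theoretic multiplicity requires Kuperberg's careful growth analysis. One could instead try to avoid $U_q(sl_3)$ altogether by working with the bilinear form $\langle w_1,w_2\rangle\eqdef\kup{\overline{w_1}w_2}$ on $S^\epsilon$ — well defined because the Kuperberg bracket respects the relations of Proposition~\ref{prop:Kup} — and showing that its Gram matrix on the non-elliptic webs is nondegenerate over $\QQ(q)$; but establishing that nondegeneracy is essentially the same difficulty and again reduces to Kuperberg's combinatorics.
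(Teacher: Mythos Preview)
The paper does not actually prove this proposition: it is stated with attribution to Kuperberg \cite{MR1403861} and immediately used, without any argument given. So there is nothing in the paper to compare your proof against beyond the implicit pointer to Kuperberg's original work.

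That said, your sketch is a faithful outline of Kuperberg's argument (spanning by confluent reduction on elliptic faces, linear independence via the $U_q(sl_3)$ invariant spaces and the growth-algorithm dimension count), and you correctly locate the nontrivial content in the equality $\dim_{\QQ(q)}\mathrm{Inv}(V_\epsilon)=N_\epsilon$. One small logical slip: the sentence ``By the spanning part $\Phi_{\QQ(q)}$ is onto'' does not follow --- spanning only tells you that $S^\epsilon\otimes\QQ(q)$ has dimension at most $N_\epsilon$, not that the map to $\mathrm{Inv}(V_\epsilon)$ is surjective. You need a separate input, either that web invariants span $\mathrm{Inv}(V_\epsilon)$ (which Kuperberg proves), or that the images of non-elliptic webs are already linearly independent (which the growth algorithm's triangularity gives directly). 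Either route closes the argument, and both are in \cite{MR1403861}; just be careful not to conflate them.
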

Let us consider a $\epsilon$-web, there are finitely many connected components of $\RR^2\setminus w$ (we call them \emph{faces} even if some may not be homeomorphic to a disk). As $w$ is compact, there is just one of these faces which is unbounded. We call it \emph{the unbounded face}. Note that because of the geometric requirements on $\epsilon$-webs, all the points of $\epsilon$ are in the adherence of the unbounded face. We say that two faces are \emph{adjacent} if an edge of $w$ is included in the intersection of their adherences.
\begin{dfn}\label{dfn:nested}
  A face of an $\epsilon$-web is said to be \emph{nested} if it is not adjacent to the unbounded face. An $\epsilon$-web with no nested face is called \emph{superficial}.
\end{dfn}
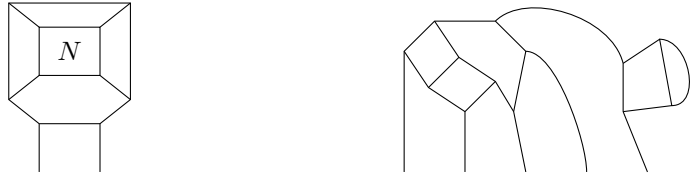
\begin{figure}[h!]
  \centering
  \begin{tikzpicture}[scale=0.8]
    \begin{scope}
   [yscale = {0.8}, xscale={1},decoration={markings, mark=at
     position 0.5 with {\arrow{>}}},postaction={decorate}]
\draw (0,0) -- (0,1) -- (-0.5,1.5) -- (-0.5,3.5) -- (1.5,3.5) -- (1.5,1.5) -- (1,1) -- (1,0);
\draw (0,2) -- (0, 3) -- (1,3) -- (1,2) --cycle ;
\draw (0,1) -- (1,1);
\draw (-0.5,1.5) -- (0,2);
\draw (-0.5,3.5) -- (0,3);
\draw (1.5,3.5) -- (1,3);
\draw (1.5,1.5) -- (1,2);
\node at (0.5, 2.5) {$N$};
\end{scope}

\begin{scope}[xshift = 6cm, yscale = {1}, xscale={1},decoration={markings, mark=at
     position 0.5 with {\arrow{>}}},postaction={decorate}]
\draw (0,0) -- (0,2) -- (0.5,2.5) -- (1.5, 2.5) -- (2,2) -- (1.8,1) -- (2,0);
\draw (1,0) --(1,1)-- (0.4,1.4) -- (0,2);
\draw (0.5,2.5) -- (0.9,1.9) -- (1.5, 1.5) -- (1.8,1);
\draw (0.4,1.4) -- (0.9,1.9);
\draw (1,1) -- (1.5, 1.5);
\draw (2,2) .. controls +(0.5,0) and +(0,0.5) .. (3,0);
\draw (1.5, 2.5) .. controls +(0.5,0.5) and +(-0.2,0.8) .. (3.6,1.8) -- (3.6,1) -- (4,0);
\draw (3.6,1.8) -- (4.2,2.2) -- (4.4, 1.1) -- (3.6,1);
\draw (4.2,2.2) .. controls +(0.5,0) and + (0.5,0) .. (4.4,1.1);  
\end{scope}
  \end{tikzpicture}
  \caption{On the left an $\epsilon$-web with a nested face (marked by a $N$), on the right a superficial (and elliptic) $\epsilon$-web with two blocks.}
  \label{fig:ex_nested}
\end{figure}
The aim of this section is to prove the next theorem~:
\begin{thm}\label{thm:superficial2monic}
  Let $\epsilon$ be an admissible sequence of signs, let $w$ be a superficial non-elliptic $\epsilon$-web, then the $K^{\epsilon}$-module $P_w$, is indecomposable. Furthermore, if $w'$ is another superficial non-elliptic $\epsilon$-web different from $w$, then $P_w$ and $P_{w'}$ are not isomorphic as $K^\epsilon$-modules.
\end{thm}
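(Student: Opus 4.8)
The plan is to reduce the theorem to a combinatorial statement about the Kuperberg bracket and then to exploit the geometry of superficial webs. By Lemma~\ref{lem:monic2indec}, indecomposability of $P_w$ follows as soon as $\kup{\bar{w}w}$ is monic of degree $l(\epsilon)$; by Lemma~\ref{lem:0monic2noiso}, non-isomorphism of $P_w$ and $P_{w'}$ follows as soon as $\deg\kup{\bar{w}w'}<l(\epsilon)$ for $w\neq w'$. Since $\kup{\bar{w_1}w_2}$ is symmetric in $q\leftrightarrow q^{-1}$, the whole theorem is equivalent to the following assertion: for superficial non-elliptic $\epsilon$-webs $w_1,w_2$, the coefficient of $q^{l(\epsilon)}$ in $\kup{\bar{w_1}w_2}$ is $1$ if $w_1=w_2$ and $0$ otherwise. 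So from now on I would only try to compute that top coefficient.

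To get at it I would first prove a general statement valid for all non-elliptic $\epsilon$-webs $w_1,w_2$: namely that $\deg\kup{\bar{w_1}w_2}\le l(\epsilon)$, together with a combinatorial description of the coefficient of $q^{l(\epsilon)}$. The tool is the reduction procedure underlying Proposition~\ref{prop:Kup}: repeatedly resolve squares via $\kup{\websquare[0.4]}=\kup{\webtwovert[0.4]}+\kup{\webtwohori[0.4]}$ and then absorb digons and vertexless circles via the other two relations, while tracking the Euler characteristic (equivalently the vertex count) of the diagram at each step. Each elementary reduction changes $\chi$ in a controlled way, and one checks that the degree of the resulting Laurent polynomial never exceeds $l(\epsilon)$, the coefficient of $q^{l(\epsilon)}$ being a count of the ``maximal resolutions'' of the closed web $\bar{w_1}w_2$, i.e.\ those sequences of reductions in which every step contributes maximally to the degree. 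It is convenient to run this induction (on, say, the number of squares of $w_2$) inside the slightly larger, stable class of semi-non-elliptic webs, so that the intermediate diagrams remain under control.

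The core of the proof is then to identify these maximal resolutions when $w_1$ and $w_2$ are superficial. Here I would use the block decomposition of a superficial web along the boundary line: $\bar{w_1}w_2$ becomes a ``connected sum'' of the blocks of $w_1$ and $w_2$, and its top coefficient is multiplicative over blocks up to lower-order terms, which reduces the problem to a single block. For a single block one argues directly that a maximal resolution of $\bar{w_1}w_2$, starting from the boundary $\epsilon$ and propagating inward, is forced to resolve each square of $w_1$ and of $w_2$ in the way that rebuilds the \emph{same} web on both sides of the gluing circle; absence of nested faces is exactly what prevents a maximal resolution from ``turning inward'' and decoupling the two halves. Hence there is a unique maximal resolution when $w_1=w_2$, contributing coefficient $1$, and none when $w_1\neq w_2$, which is the assertion of the previous paragraph.

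The step I expect to be the main obstacle is the control of the top-degree behaviour under the square relation, which replaces one diagram by a sum of two: one must simultaneously rule out cancellation among the maximal-degree contributions of the various resolutions (so that the leading coefficient really is a nonnegative count) and show that, for superficial webs, precisely one resolution pattern survives at the top degree. That superficiality is genuinely needed is visible from the web $w$ of Section~\ref{sec:decomposable-module}, where the leading coefficient of $\kup{w\bar{w}}$ is $2$; this strongly suggests that the right bookkeeping is a local, face-by-face analysis of how a maximal resolution is allowed to cross the gluing circle, with the nested faces being exactly the places where extra maximal resolutions can appear.
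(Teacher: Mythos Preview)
Your reduction of the theorem to the combinatorial Key Lemma via Lemmas~\ref{lem:monic2indec} and~\ref{lem:0monic2noiso} is exactly right and matches the paper, as does the idea of working inside the enlarged class of semi-non-elliptic webs so that intermediate diagrams stay under control.

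The gap is in the second half. Your ``block decomposition'' of $\bar{w_1}w_2$ does not work as stated: blocks are face-adjacency classes of a web with boundary, not connected components of the closed web, and gluing $\bar{w_1}$ to $w_2$ along $\epsilon$ merges faces across the gluing line in ways that do not respect either web's block structure, so there is no ``connected sum over blocks'' available. More seriously, the assertion that a maximal resolution ``is forced to resolve each square \dots in the way that rebuilds the same web on both sides'' is precisely the content of the Key Lemma, and you have only asserted it. Resolving a square of $\bar{w_1}w_2$ can create new squares or even nested faces on one side of the gluing line, so it is not clear how to propagate a global ``maximal resolution'' through the reduction process, and your proposal does not supply a mechanism for doing so.

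The paper takes a more local route. It orders pairs $(w_1,w_2)$ of superficial non-elliptic $\epsilon$-webs first by boundary length and then by total vertex count, and proves the Key Lemma by induction on this order. The device replacing your global picture is to track, through every reduction of $\bar{w_1}w_2$, the \emph{border} --- the curve along which $\bar{w_1}$ and $w_2$ were glued --- and to classify each circle, digon, or square by how it meets this border. Each configuration either drops the boundary length (so induction applies to a strictly smaller pair) or, in the one symmetry-preserving square case, keeps the pair symmetric with a controlled degree shift. The delicate situations are those in which a square meets the border in two points on the same side or on adjacent sides: pushing the border off the square leaves a half-web that is no longer non-elliptic, and Lemmas~\ref{lem:sneni}, \ref{lem-1-elliptic} and~\ref{lem-semi-superficial} are needed to show that such a half-web decomposes in $S^{\epsilon'}$ as a positive combination of superficial non-elliptic webs with coefficients of degree at most~$1$, which is just enough slack for the induction to close. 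This case analysis is where your anticipated ``main obstacle'' actually lives, and the paper resolves it by enumeration rather than by a single geometric principle.
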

We begin by a few technical definitions. In an $\epsilon$-web, let's consider all the faces but the unbounded one. They come in adjacency classes. We call such an adjacency a \emph{block}. In other words, blocks are connected components of the graph obtained from the dual graph by removing the vertex corresponding to the unbounded component and all the edges involving this vertex.  
\begin{dfn}  An $\epsilon$-web is semi-non-elliptic if it contains no verticeless loop, no
  digon, and at most one square per block.
\end{dfn}
\begin{lem}\label{lem:sneni}
  If an $\epsilon$-web $w$ superficial and semi-non-elliptic then in the skein module $S^\epsilon$ it is equal to a sum of superficial non-elliptic $\epsilon$-webs with less vertices.
\end{lem}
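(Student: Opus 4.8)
The plan is to induct on the number of vertices of $w$ and exploit the combinatorics of blocks. If $w$ has no bounded faces at all, it is already non-elliptic (being semi-non-elliptic it has no loop, no digon; having no bounded face it has no square), so there is nothing to prove. Otherwise, pick a block $B$ of $w$. Since $w$ is semi-non-elliptic, $B$ contains no digon and at most one square; since $w$ is superficial, every face of $B$ is adjacent to the unbounded face. I claim that $B$ must contain a face which is a square. Indeed, by Remark \ref{req:basic-on-web} the closed web underlying $w$ (or rather any web obtained by looking locally at $B$ together with its surrounding edges) must contain a square, a digon, or a vertexless circle by the Euler characteristic argument; the latter two are excluded by the semi-non-elliptic hypothesis, so $B$ has a square face $S$. (One should be slightly careful here: the Euler characteristic count in Remark \ref{req:basic-on-web} is for closed webs, so the right statement to extract is that a bounded region of a web whose internal faces are all $\ge 4$-gons, and which has the local structure of a block, must actually contain a $4$-gon; this is where I would be most careful, and it is the main obstacle — see below.)

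Having located the unique square $S$ of the block $B$, I apply the square relation of Proposition \ref{prop:Kup}:
\[
\kup{\websquare[0.4]} = \kup{\webtwovert[0.4]} + \kup{\webtwohori[0.4]},
\]
which in the skein module $S^\epsilon$ rewrites $w$ as a sum $w = w_1 + w_2$ of two $\epsilon$-webs, each obtained from $w$ by resolving $S$ in one of the two ways. Each $w_i$ has strictly fewer vertices than $w$ (a square resolution removes the four vertices of $S$, possibly creating digons or lower faces elsewhere). The key point to check is that each $w_i$ is again superficial and semi-non-elliptic, so that the induction hypothesis applies to it. Superficiality is essentially preserved because resolving a face adjacent to the unbounded region cannot create a nested face — the two new faces produced (or merged) are still adjacent to the unbounded face, and no other face loses its adjacency to the unbounded face. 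For the semi-non-elliptic condition: resolving $S$ destroys the unique square of block $B$, so within (the image of) $B$ there is now no square; it may however merge $B$ with neighbouring blocks or create a digon. If a digon or a second square appears, it appears inside a single (possibly enlarged) block, and one reduces it further by the digon relation $\kup{\webbigon[0.4]} = [2]\kup{\webvert[0.4]}$ or again by the square relation, each time strictly decreasing the vertex count; since the process strictly decreases the number of vertices it terminates, and one may assemble this bookkeeping into the single induction on vertices.

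I would organise the write-up so that the induction is on the number of vertices of $w$, and inside the inductive step: (1) if $w$ is already non-elliptic, done; (2) otherwise $w$ has a bounded face, hence a block $B$ with a square (the point needing care), hence apply the square relation to get $w = w_1 + w_2$; (3) observe each $w_i$ has fewer vertices and is still superficial, but possibly only semi-non-elliptic after further local reductions (digons removed by the bigon relation, extra squares by the square relation), all strictly decreasing vertices; (4) conclude by the induction hypothesis. The main obstacle, as noted, is step (2): verifying that a block with no digon and at most one square actually contains a square requires a careful planar Euler-characteristic argument adapted from Remark \ref{req:basic-on-web} to the setting of a single block rather than a full closed web, and in particular ruling out the degenerate cases (a block reduced to a single edge or a "path" with no internal face) — but such degenerate blocks are impossible since a block by definition contains at least one bounded face, and a bounded face whose boundary uses only vertices of $w$, with all faces $\ge 4$-gons and no vertexless loops, forces enough structure for the Euler count to produce a $4$-gon.
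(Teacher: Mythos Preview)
Your overall strategy --- induct on the number of vertices and resolve a square --- matches the paper's, but there are two genuine gaps.

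First, your ``main obstacle'' is both unnecessary and false. You do not need every block to contain a square: since $w$ is semi-non-elliptic but not non-elliptic, it has no circle and no digon, hence by definition it has a square somewhere; just work with that one. Your stronger claim that an \emph{arbitrary} block must contain a square already fails for a single hexagonal face with six legs running to the boundary (a superficial non-elliptic web with one block and no square). The Euler count of Remark~\ref{req:basic-on-web} is for closed webs and does not transfer to an individual block, precisely because the vertices on the outer rim of a block have edges leaving the block.

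Second, and more seriously, your treatment of the two resolutions $w_1, w_2$ is incomplete. You allow a digon to appear and propose to remove it with the digon relation, but that relation carries a factor $[2] = q + q^{-1}$, whereas the lemma asserts a sum with \emph{positive integer} coefficients. So digons must be excluded outright, and in fact you need $w_1$ and $w_2$ to be superficial and semi-non-elliptic on the nose, so that the induction hypothesis applies to them directly. This is exactly where superficiality does the real work, and it is the heart of the paper's argument. Because $w$ is superficial, one of the four faces adjacent to $S$ is the unbounded face $U$; the other three neighbours $A, B, C$ lie in the block of $S$ and hence have at least six sides. In the resolution $w_1$ (in which the regions of $S$ and $C$ are absorbed into $U$), the faces $A'$ and $B'$ each lose two sides, so they may become squares but never digons; the key point is that $A'$ and $B'$ lie in \emph{different} blocks of $w_1$, for a face-path joining them in $w_1$ would yield a face-path from $A$ to $B$ in $w$ avoiding $S$ and $C$, forcing $C$ to be nested. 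Thus $w_1$ is semi-non-elliptic. In $w_2$ only $C'$ can become a new square, and it is alone in its block. This direct verification is what is missing from your argument; with it, no cascading reductions and no $[2]$ factors are needed.
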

Here, by  ``sum'' here we mean linear combination with only positive integer coefficients.
\begin{proof}
  We prove this by induction on the number of trivalent vertices. If $w$ is
  already non-elliptic then there is nothing to prove. Else there is
  at least a square somewhere. Then if 
\[
w = \foamsquarecircle[0.4],
\]
we have \[ 
w = \vcenter{\hbox{
    \begin{tikzpicture}[scale=0.3,decoration={markings, mark=at
        position 0.5 with {\arrow{>}}},postaction={decorate}]
      \draw[dotted] (1.5,1.5) circle (1.414cm);
      \draw[postaction= {decorate}] (0.5,0.5) .. controls (1,1) and (1,2) .. (0.5, 2.5);
      \draw[postaction= {decorate}] (2.5,2.5) .. controls (2,2) and (2,1) .. (2.5, 0.5);
    \end{tikzpicture}}}+
\vcenter{\hbox{
 \begin{tikzpicture}[scale=0.3,decoration={markings, mark=at
        position 0.5 with {\arrow{>}}},postaction={decorate}]
      \draw[dotted] (1.5,1.5) circle (1.414cm);
      \draw[postaction= {decorate}] (0.5,0.5) .. controls (1,1) and (2,1) .. (2.5, 0.5);
      \draw[postaction= {decorate}] (2.5,2.5) .. controls (2,2) and (1,2) .. (0.5, 2.5);
    \end{tikzpicture}}}.
\]
Let $w_1$ and $w_2$ be these two webs. As $w$ is superficial, one of
the 4 faces around the square should be the unbounded face $U$, we can
suppose it's the one on the top. We'll now inspect the faces
of $w_1$ and $w_2$, see figure \ref{fig:w1w2} for names of faces.

\begin{figure}[h!]
  \centering
 \begin{tikzpicture}[scale=1]
\input{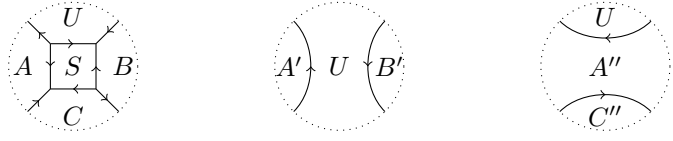}
\end{tikzpicture}
  \caption{The webs $w$, $w_1$ and $w_2$.}
  \label{fig:w1w2}
\end{figure}
The faces $A'$ and $B'$ of $w_1$ may be two squares but as $w$ is
superficial, $A'$ and $B'$ are on different blocks, else there
would exist a path of faces in $w$ from $A$ to $B$ disjoint of $C$ and
$S$ and $C$ would be nested. As the square $S$ is the only square of
its block in $w$, the block of $A'$ and the block of $B'$ have at most
one square, hence $w_1$ is superficial and semi-non-elliptic. For $w_2$
now, the only possibly new square is the face $C$ and if it is a
square, it's the only one in its block, hence $w_2$ is superficial and 
semi-non-elliptic. To conclude, just notice that $w_1$ and $w_2$ have
less vertices than $w$. 
\end{proof}
\begin{dfn}
An $\epsilon$-web $w$ is called $1$-elliptic if it contains no circle, no digon and if there are at most one square in each block except in one where it can contains at most two.
\end{dfn}
\begin{lem}\label{lem-1-elliptic}
If $w$ is a superficial $1$-elliptic $\epsilon$-web then there exist some non-elliptic $\epsilon$-webs $w_i$ and some symmetric polynomials $P_i$ in $\mathbb{N}[q,q\m]$ with degree at most 1 such that in $S^\epsilon$ we have $w = \sum P_i w_i$.
\end{lem}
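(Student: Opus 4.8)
The plan is to argue by induction on the number $v(w)$ of trivalent vertices of $w$, reducing everything to Lemma~\ref{lem:sneni}. The only two Kuperberg relations we shall ever need are the square relation $\kup{\websquare[0.4]}=\kup{\webtwovert[0.4]}+\kup{\webtwohori[0.4]}$, which introduces no power of $q$, and the digon relation $\kup{\webbigon[0.4]\,}=[2]\cdot\kup{\webvert[0.4]\,}$, which introduces the symmetric factor $[2]=q+q^{-1}\in\mathbb{N}[q,q^{-1}]$ of degree $1$; the circle relation (which would introduce $[3]$, of degree $2$) is never needed, since no vertexless loop is ever created --- a loop appearing when one smooths a square would force a digon in $w$, which is excluded. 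Thus the whole game is to organise the reduction so that along each branch the digon relation is applied \emph{at most once}.

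If $w$ is non-elliptic there is nothing to prove (take $P=1$); if $w$ is semi-non-elliptic, Lemma~\ref{lem:sneni} already writes $w$ as a sum of superficial non-elliptic webs with fewer vertices, all coefficients being $1$. So assume $w$ is $1$-elliptic but not semi-non-elliptic: there is a unique block $B_0$ containing exactly two squares $S$ and $S'$, every other block contains at most one square, and $w$ has no circle and no digon. As in the proof of Lemma~\ref{lem:sneni}, superficiality forces one of the four faces around $S$ to be the unbounded face $U$, and applying the square relation at $S$ gives $w=w_1+w_2$ in $S^\epsilon$, where $w_1$ is the smoothing $\webtwovert[0.3]$ and $w_2$ the smoothing $\webtwohori[0.3]$; both have $v(w)-4$ vertices and both are superficial (the superficiality argument of Lemma~\ref{lem:sneni} only uses that one face around the square is unbounded).

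Now we examine $w_1$ and $w_2$ with the notation of figure~\ref{fig:w1w2}. The only faces that can be squares in $w_1$ without being squares in $w$ are $A'$ and $B'$, which moreover lie in different blocks of $w_1$ (otherwise $C$ would have been nested in $w$), and the only such face in $w_2$ is $C$. A \emph{bounded} face adjacent to $S$ in $w$ that is a square must belong to $B_0$, hence must be $S'$; so, since a square neighbour of $S$ collapses to a digon in the smoothing that does not separate it from $S$, a digon is created precisely when $S'$ is one of $A$, $B$, $C$ --- and in that case it is the only digon created, removing it via the digon relation costs a single factor $[2]$, and the block $B_0$ is left with no square at all, so the resulting web is semi-non-elliptic. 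If instead $S'$ is not adjacent to $S$, then $A$, $B$, $C$ are hexagons or larger, no digon and no loop appear, and each of $w_1,w_2$ is a superficial web without circle or digon in which at most one block carries two squares (namely the one containing $S'$, if it also contains $A'$, $B'$ or $C$) --- that is, each $w_i$ is again $1$-elliptic, or already semi-non-elliptic. In every case each $w_i$, after possibly extracting one factor $[2]$, is a superficial $1$-elliptic or semi-non-elliptic web with strictly fewer vertices, to which the induction hypothesis, respectively Lemma~\ref{lem:sneni}, applies. Because the creation of a digon coincides with the destruction of the bad block, no branch of the recursion spends the digon relation twice; hence every coefficient produced is a product of $1$'s (from square resolutions and from Lemma~\ref{lem:sneni}) with at most one $[2]$, i.e. it lies in $\{1,[2]\}$, is symmetric, belongs to $\mathbb{N}[q,q^{-1}]$ and has degree at most $1$. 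Collecting the terms gives the desired expression $w=\sum_i P_iw_i$.

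The delicate point --- which I expect to be the main obstacle --- is precisely the claim that producing a digon and destroying the bad block $B_0$ happen simultaneously, so that the single available factor $[2]$ is never needed twice along a branch. Making this rigorous requires a careful case analysis of the mutual positions of the two squares $S$, $S'$ and of the unbounded face $U$ around them, in the spirit of (and slightly strengthening) the superficiality argument already used for Lemma~\ref{lem:sneni}; in particular one has to check that when $S$ and $S'$ are not adjacent, removing $S$ genuinely separates $S'$, inside what used to be $B_0$, from any newly created square among $A'$, $B'$, $C$.
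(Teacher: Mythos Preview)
Your overall strategy coincides with the paper's: induct on the number of vertices, use Lemma~\ref{lem:sneni} for the semi-non-elliptic case, and in the genuinely $1$-elliptic case split according to whether the two squares $S,S'$ of the bad block share an edge. The non-adjacent case is handled correctly and matches the paper. Your worry in the final paragraph about this case is misplaced: you do \emph{not} need $S'$ to be separated from the newly created squares $A',B',C'$; you only need each $w_i$ to be $1$-elliptic, and this follows because $A'$ and $B'$ lie in different blocks of $w_1$ (by the nestedness argument from Lemma~\ref{lem:sneni}) so $S'$ can join at most one of them, while in $w_2$ only $C'$ is new. The induction hypothesis then gives coefficients of degree $\le 1$ directly, with no $[2]$ spent on this step.

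The genuine gap is in the \emph{adjacent} case. You assert that after removing the digon ``the block $B_0$ is left with no square at all, so the resulting web is semi-non-elliptic'', but removing the digon makes the face on its far side lose two sides, so a \emph{new} square may appear there; together with the surviving possible square $B'$ from the smoothing of $S$ this could a priori give two squares in one block, and then a further $[2]$ would be needed. You also do not treat the other smoothing (the one in which $S'$ merges away and no digon is created). The paper closes both gaps by writing $w=[2]w_1+w_2$ for the double-square configuration and then doing a short case analysis on which of the faces around the two adjacent squares is the unbounded one: in each case the two potentially new squares in $w_1$ are shown to lie in different blocks (otherwise a face of $w$ would be nested), and in $w_2$ the merged face has at least six sides so only one new square can appear. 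This is exactly the ``careful case analysis of the mutual positions of $S$, $S'$ and $U$'' you anticipate, but it lives in the adjacent case, not the non-adjacent one.
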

\begin{proof}
  We prove this result by recursion on the number of vertices. If there are no vertices the web is non-elliptic and this is done.  If $w$ is semi-non-elliptic then the result comes from lemma \ref{lem:sneni}. So it remains to understand the case where there is one block with  two squares. If the two squares are far from each other (we mean that they don't share any edge) then we can proceed as in the proof of lemma \ref{lem:sneni} and prove that $w$ is a sum (with positive integer coefficient) of 1-elliptic webs and then we conclude by recursion. Now we study the case where the two squares touch each other. As $w$ is superficial the two squares $S_1$ and $S_2$ (see figure \ref{fig:2-square} for the notations) should touch the unbounded face. Furthermore $w$ is equal to $[2]w_1+w_2$ (see figure \ref{fig:w1w2-2}).
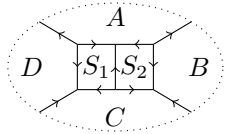
\begin{figure}[h!]
  \centering 
   \begin{tikzpicture}[scale=1]
\begin{scope}[yscale ={0.6}, xscale={1},decoration={markings, mark=at
     position 0.5 with {\arrow{>}}},postaction={decorate}]
      \draw[dotted] (1.5,1.5) circle (1.414 cm);
      \draw[postaction = {decorate}] (0.5,0.5)--(1,1);
      \draw[postaction = {decorate}] (2,2) -- (2.5,2.5);
      \draw[postaction = {decorate}] (1.5,1)--(1.5,2);
      \draw[postaction = {decorate}] (1.5,1)--(1,1);
      \draw[postaction = {decorate}] (1.5,1)--(2,1);
      \draw[postaction = {decorate}] (2,2)--(2,1);
      \draw[postaction = {decorate}] (2.5,0.5) -- (2,1);
      \draw[postaction = {decorate}] (2,2) -- (1.5,2);
      \draw[postaction = {decorate}] (1,2) -- (1.5,2);
      \draw[postaction = {decorate}] (1,2)--(1,1);
      \draw[postaction = {decorate}] (1,2)--(0.5,2.5);
      \node at (1.5,0.4) {$C$};
      \node at (1.25,1.5) {$S_1$};
      \node at (1.75,1.5) {$S_2$};
      \node at (1.5,2.6) {$A$};
      \node at (0.4,1.5) {$D$};
      \node at (2.6,1.5) {$B$};
 \end{scope}
 
 \end{tikzpicture}
  \caption{Situation where two squares touch each other.}
  \label{fig:2-square}
\end{figure}
Now let us consider the different cases. There are two different situations, either $A$ or $C$ is unbounded (the two situations are symmetric) or both $B$ and $D$ are unbounded. If $A$ is unbounded, then $D'$ and $B'$ may be squares but on different blocks, otherwise, $C$ would be nested, and hence $w_1$ is superficial semi-non-elliptic. On the other hand $B''$ and $D''$ have at least 6 vertices, and $C''$ may be a square but in every case the $\epsilon$-web $w_2$ is superficial semi-non-elliptic. It remains the case where $B$ and $D$ are unbounded. The face $A'$ has at least 6 vertices so that $w_1$ is semi-non-elliptic. On the other hand $A''$ and $C''$ may be squares but there are clearly on different blocks so that $w_2$ is semi-non-elliptic. Hence using lemma \ref{lem:sneni} we conclude.
\begin{figure}[h!]
\centering
\begin{tikzpicture}[scale=1]
  \input{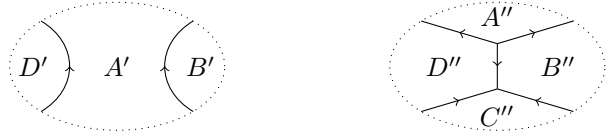}
\end{tikzpicture}
  \caption{The webs $w_1$ and $w_2$.}
  \label{fig:w1w2-2}
\end{figure}
\end{proof}
\begin{dfn}\label{dfn:semi-super}
  An $\epsilon$-web is said to be \emph{semi-superficial} if it contains no circle, no digon, only one square, only one nested face and if this nested face is an hexagon which share a side with the square.
\end{dfn}
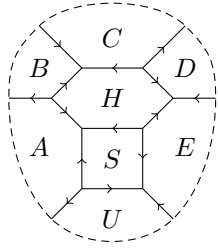
\begin{figure}[h!]
  \centering
  \begin{tikzpicture}[scale=0.8]
     \begin{scope}[scale=1,decoration={markings, mark=at
        position 0.5 with {\arrow{>}}},postaction={decorate}]
\coordinate (A) at (-0.5, -1);
\coordinate (B) at (-0.5,  0);
\coordinate (C) at (-1,  0.5);
\coordinate (D) at (-0.5,  1);
\coordinate (A1) at (0.5, -1);
\coordinate (B1) at (0.5,  0);
\coordinate (C1) at (1,  0.5);
\coordinate (D1) at (0.5,  1);
\coordinate (e1) at (-1,  -1.5);
\coordinate (e2) at (-1.7,  0.5);
\coordinate (e3) at (-1.2, 1.7);
\coordinate (e6) at (1,  -1.5);
\coordinate (e5) at (1.7,  0.5);
\coordinate (e4) at (1.2, 1.7);
\draw[postaction= {decorate}] (A) -- (B);
\draw[postaction= {decorate}] (A) -- (A1);
\draw[postaction= {decorate}] (A) -- (e1);
\draw[postaction= {decorate}] (e6) -- (A1);
\draw[postaction= {decorate}] (B1)--(A1);
\draw[postaction= {decorate}] (B1)--(B);
\draw[postaction= {decorate}] (B1)--(C1) ;
\draw[postaction= {decorate}] (C)--(B);
\draw[postaction= {decorate}] (C) --(D);
\draw[postaction= {decorate}] (C)--(e2);
\draw[postaction= {decorate}] (e5)-- (C1);
\draw[postaction= {decorate}] (D1)-- (D);
\draw[postaction= {decorate}] (D1) --(C1);
\draw[postaction= {decorate}] (D1) -- (e4);
\draw[postaction= {decorate}] (e3)  --(D);
\draw[densely dashed] (e1) .. controls +(-0.5,+0.5) and +(0,-0.7) .. (e2) .. controls +(0,0.6) and +(-0.4,-0.4) .. (e3).. controls +(0.5,0.5) and +(-0.5,+0.5) .. (e4) .. controls +(0.4,-0.4) and +(0,0.6) ..(e5).. controls +(0,-0.7) and +(0.5,0.5) ..(e6).. controls +(-0.5,-0.5) and +(0.5,-0.5)..(e1);
\node at (0,-1.5) {$U$};
\node at (0,-0.5) {$S$};
\node at (0,0.5) {$H$};
\node at (0,1.5) {$C$};
\node at (-1.2, -0.3) {$A$};
\node at (1.2, -0.3) {$E$};
\node at (-1.2, 1) {$B$};
\node at (1.2, 1) {$D$};
\end{scope}
  \end{tikzpicture}
  \caption{Semi-superficial web. The faces $A$, $B$, $C$, $D$, $E$, $H$ and $S$ are bounded. The label $U$ shows the undounded face.}
  \label{fig:semi_superficial}
\end{figure}
\begin{lem}\label{lem-semi-superficial}
If $w$ is a semi-superficial $\epsilon$-web then there exist some non-elliptic $\epsilon$-webs $w_i$ and some symmetric polynomial $P_i$ in $\mathbb{N}[q,q\m]$ with degree at most 1 such that in $S^\epsilon$ we have $w = \sum P_i w_i$.
\end{lem}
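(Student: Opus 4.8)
The plan is to run the same square-resolution induction as in Lemmas~\ref{lem:sneni} and~\ref{lem-1-elliptic}, applied to the unique square $S$ of $w$, but with a more careful treatment of the nested hexagon $H$. Since the only nested face is $H$ and $H\neq S$, the square $S$ is adjacent to the unbounded face $U$; hence among the four faces meeting $S$ — one of which is $H$, by the definition of semi-superficiality — at least one is $U$. Applying the square relation of Proposition~\ref{prop:Kup} to $S$ gives $w=w_1+w_2$ with $w_1$, $w_2$ having strictly fewer vertices than $w$. As every face of $w$ other than $S$ has at least six sides (no digon, no second square) and a square-resolution makes each neighbouring face lose at most two sides while the two merged faces lose none, no digon is produced — so no factor $[2]$ appears here — and at most two new squares are created, a merged face never being a square.

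Now I would split according to the position of $U$ around $S$ relative to $H$. If $U$ shares a corner of $S$ with $H$, then in the resolution that merges $H$ with the face opposite to it across $S$, that opposite face is adjacent to $U$ (being consecutive to $U$ around $S$), and in the other resolution $H$ becomes adjacent to the large merged region containing $U$; in either case $H$ is no longer nested, so $w_1$ and $w_2$ are superficial, hence superficial $1$-elliptic by the first paragraph. Applying Lemma~\ref{lem:sneni} or Lemma~\ref{lem-1-elliptic} to each, one gets $w_i=\sum_j P_{ij}u_{ij}$ with the $u_{ij}$ non-elliptic and the $P_{ij}$ symmetric in $\mathbb{N}[q,q\m]$ of degree at most $1$, which proves the lemma in this case.

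If instead $U$ is opposite to $H$ across $S$, then one of the two resolutions merges $H$ into $U$, giving a superficial $1$-elliptic web handled as above; the other resolution leaves $H$ separate, where it loses two sides and becomes a square that is still nested, while the rest of the web turns superficial (the two faces of $S$ neighbouring $H$ are consecutive to $U$ around $S$, so they merge with $U$). I then resolve that square $H$ in turn: one of its four surrounding faces is the big region adjacent to $U$, so both resulting webs are superficial; since the resolution of $S$ created no new square other than $H$ itself, resolving $H$ adds at most two new squares, so these webs are $1$-elliptic and Lemmas~\ref{lem:sneni}/\ref{lem-1-elliptic} finish the argument with degree-$\leq 1$ coefficients. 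Collecting everything, $w=\sum_i P_iw_i$ in $S^\epsilon$ with the $w_i$ non-elliptic and the $P_i$ symmetric in $\mathbb{N}[q,q\m]$ of degree at most $1$.

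The main obstacle is the geometry of these case distinctions: one must pin down exactly which faces are merged by each smoothing of $S$ (and, in the second case, of $H$), verify the nesting status of $H$ afterwards, and keep the number of newly created squares within each block under control — all using that $w$ is superficial away from $H$, so that the faces surrounding $S$ and their blocks are suitably constrained. Degenerate configurations (a face meeting $S$ along several edges, or faces that are not disks) have to be handled separately, and this is where one has to check that one never leaves the class of superficial $1$-elliptic webs; the bound on the degrees of the $P_i$ is then automatic from the cited lemmas.
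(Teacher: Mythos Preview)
Your approach is essentially the paper's: resolve the unique square $S$, and in the branch where the hexagon $H$ survives as a square resolve it once more, landing on superficial $1$-elliptic (or semi-non-elliptic) webs to which Lemmas~\ref{lem:sneni} and~\ref{lem-1-elliptic} apply. The paper names the three resulting webs $w_1,w_2,w_3$ and checks exactly the face-counts you outline. Two inaccuracies in your write-up should be fixed, though.

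First, your Case~1 is vacuous. Two faces lying on \emph{adjacent} sides of a square always share the external edge emanating from their common corner of $S$, so they are edge-adjacent; since $H$ is nested it cannot be edge-adjacent to $U$, and hence $U$ cannot occupy a side of $S$ adjacent to $H$'s side. Thus $U$ is necessarily opposite $H$ across $S$ --- this is why the paper works directly from the single configuration of its figure without any case split.

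Second, in your Case~2 the parenthetical ``the two faces of $S$ neighbouring $H$ \dots\ merge with $U$'' is wrong. In the resolution of $S$ that keeps $H$ and $U$ separate, the two side faces (call them $A$ and $E$) merge \emph{with each other}, not with $U$; the unbounded face merely loses two vertices. The merged face $A\cup E$ is bounded with at least eight sides, and the new square $H$ is bordered by $A\cup E$, $B$, $C$, $D$ --- so $H$ is indeed still nested, as you say, but for a different reason. Your subsequent resolution of $H$ then gives precisely the paper's $w_2$ (possible squares $B,D$) and $w_3$ (possible square $C$), and the earlier resolution of $S$ gives the paper's $w_1$ (possible squares $A,E$); from there Lemmas~\ref{lem:sneni} and~\ref{lem-1-elliptic} conclude as you indicate.
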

\begin{proof}
We take the notation of figure~\ref{fig:semi_superficial}. We perform the square reduction on $S$ and then on $H$ in the configuration where it's possible so that in the skein module $S^\epsilon$ we have : $w=w_1+w_2+w_3$. See figure \ref{fig:semisuper-reductions} for a description of the $w_i$.
\begin{figure}[h!]
  \centering
  \begin{tikzpicture}[scale=0.8]
    \input{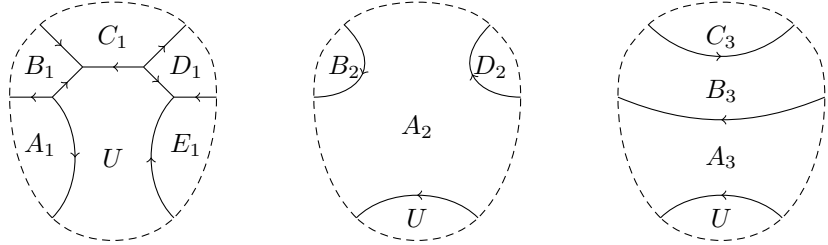}
  \end{tikzpicture}
  \caption{From left to right $w_1$, $w_2$ and $w_3$.}
  \label{fig:semisuper-reductions}
\end{figure}
\begin{enumerate}
\item \label{enum:fact1}
The web $w_1$ is superficial and $1$-elliptic. The superficiality is clear, the only two faces which may be squares are $A_1$ and $E_1$.
\item\label{enum:fact2} The web $w_2$ is superficial and $1$-elliptic. The face $A_2$ has at least 6 sides because it has the sides of $A$ minus three of them and the sides of $E$ minus three of them then it leads to at least 6 sides. The superficiality is clear, the only two faces which may be squares are $B_2$ and $D_2$.
\item\label{enum:fact3} The web $w_3$ is superficial and semi-non-elliptic. The superficiality is clear. The only face which can be a square is $C_3$.
\end{enumerate}
From (\ref{enum:fact1}), (\ref{enum:fact2}) and (\ref{enum:fact3}), and the lemmas \ref{lem-1-elliptic} and \ref{lem:sneni} we conclude easily.
\end{proof}
Thanks to lemma \ref{lem:monic2indec} and \ref{lem:0monic2noiso}, to prove the theorem \ref{thm:superficial2monic}, it's enough to prove the following proposition~:
\begin{klem}
  Let $\epsilon$ be a sequence of signs of length $n$. Let $w_1$ and
  $w_2$ be $\epsilon$-webs, suppose $w_1$ and $w_2$
  are superficial and non-elliptic. If $w_1 = w_2$ then $\langle \bar{w_1}w_2\rangle$ is monic and has degree $n$, else $\deg \langle \bar{w_1}w_2\rangle < n$.
\end{klem}

 \subsection{Proof of the key lemma}
\newcommand{\w}{\ensuremath{\mathbf{w}}}
\label{sec:proof-theo-monic}
\begin{dfn}
  \label{dfn:order}
  We consider the set $W$ of pairs of (isotopy class) of superficial non-elliptic webs with the same boundary. On this set we define a partial order~: $(w_1,w_2)<(w'_1,w'_2)$ if and only if either  $l(\partial w_1) < l(\partial w'_1)$ or  $l(\partial w_1) = (\partial w'_1)$ and $\# V(w_1) + \#V(w_2) < \#V(w'_1) + \#V(w'_2)$. An element $\w =(w_1, w_2)$ of $W$ is \emph{symmetric} if $w_1=w_2$.
\end{dfn}
This order is meant to encode the complexity of a web. We could have sharpened it but this won't be necessary for our purposes. For $n\in\NN$, we denote $W_n$ the subset of $W$ in which the webs in the pairs have a boundary of length $n$. For example $W_0 =\{(\emptyset,\emptyset)\}$ and $W_1=\emptyset$. As every $W_n$ is finite, $W$ is a well quasi-ordered set with one minimal element : $(\emptyset,\emptyset)$.
\begin{dfn}
  Let $\w =(w_1,w_2)$ be an element of $W_n$, we say that $\w$ is \emph{nice} if
  \begin{itemize}
  \item the element $\w$ is symmetric, $\langle \bar{w_1}w_2\rangle$ is monic and has degree $n$,
  \item or if $\w$ is not symmetric and the degree of $\langle \bar{w_1}w_2\rangle$ is strictly smaller than $n$.
  \end{itemize}
\end{dfn}
The key lemma is rephrased in this vocabulary by the following proposition~:
\begin{prop}\label{prop:superfical2monic}
  Every element of $W$ is nice.
\end{prop}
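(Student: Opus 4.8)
The plan is to prove Proposition~\ref{prop:superfical2monic} by induction on the well quasi-ordered set $W$, using the well-founded partial order of Definition~\ref{dfn:order}. The base case is immediate: $W_0 = \{(\emptyset,\emptyset)\}$ is symmetric with $\langle\emptyset\rangle = 1$, which is monic of degree $0$, and $W_1 = \emptyset$. For the inductive step, fix $\w = (w_1,w_2)\in W_n$ with $n\geq 2$ and assume every strictly smaller element of $W$ is nice. The key observation is that $\bar{w_1}w_2$ is a closed web, and by Remark~\ref{req:basic-on-web} it must contain a square, a digon, or a vertexless circle somewhere. Since $w_1$ and $w_2$ are each non-elliptic, any such elliptic face of $\bar{w_1}w_2$ must involve edges from both $w_1$ and $w_2$ crossing the gluing boundary $\epsilon$; I expect to analyze what happens near the boundary circle when one glues $\bar{w_1}$ to $w_2$.

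\textbf{The reduction near the boundary.} The heart of the argument will be a careful case analysis of how $w_1$ and $w_2$ look in a neighborhood of their common boundary $\epsilon$. First I would deal with the trivial case $n=2$: here $\epsilon$ is forced (up to symmetry) and there is essentially one superficial non-elliptic web, making $\bar{w_1}w_2$ a disjoint union of circles, handled directly. For general $n$, I would look at the leftmost (or an extremal) boundary point of $\epsilon$ and follow the edges of $w_1$ and $w_2$ emanating from it. Superficiality of both webs forces the faces adjacent to the boundary to be ``simple'', and gluing produces near the boundary either a digon or a square (or, if we're lucky, directly a smaller configuration). Applying the Kuperberg relations of Proposition~\ref{prop:Kup} to this elliptic face, one writes $\langle\bar{w_1}w_2\rangle$ as a $\ZZ[q,q^{-1}]$-combination (with the digon relation contributing a factor $[2]$, the circle relation a factor $[3]$) of brackets $\langle\bar{w_1'}w_2'\rangle$ where $w_1', w_2'$ have fewer vertices or a shorter boundary. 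The crucial point is that the resulting webs $w_1', w_2'$ are \emph{not} automatically non-elliptic, so one cannot directly apply the induction hypothesis to them; instead they will be superficial and $1$-elliptic, or superficial semi-non-elliptic, or semi-superficial. This is exactly why Lemmas~\ref{lem:sneni}, \ref{lem-1-elliptic} and \ref{lem-semi-superficial} were proved: each rewrites such a web in $S^\epsilon$ as $\sum P_i w_i$ with $w_i$ superficial non-elliptic and $P_i\in\NN[q,q^{-1}]$ symmetric of degree at most $1$ (and for semi-non-elliptic, with \emph{strictly fewer} vertices).

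\textbf{Tracking monicity and degree.} Once everything is expressed in terms of pairs $(w_1^{(i)}, w_2^{(j)})$ of superficial non-elliptic webs — all strictly smaller than $\w$ in the order, since the reductions either shorten the boundary or decrease the vertex count — I would invoke the induction hypothesis that each is nice, and then bookkeep degrees. If $\w$ is not symmetric, every term $\langle\overline{w_1^{(i)}}w_2^{(j)}\rangle$ appearing either is itself asymmetric (degree $< n$ by induction, possibly $< n'$ for a shorter boundary $n'$, even better) or is symmetric but then multiplied by a polynomial $P$ of degree $\leq 1$ coming from the reductions — and one must check the total degree stays below $n$; this works because the single circle/digon relation used costs at most $[3]$ resp.\ $[2]$, i.e.\ degree $\leq 2$ resp.\ $\leq 1$, against a boundary-length drop or vertex drop that I will have arranged to compensate. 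If $\w$ is symmetric, i.e.\ $w_1 = w_2 =: w$, then $\bar{w}w = \mathrm{tr}$ of a web glued to its mirror, and I expect the reduction to be symmetric as well: one leading term is $[3]^?\cdot\langle\overline{w'}w'\rangle$ for a symmetric smaller pair, whose bracket is monic of the appropriate degree by induction, and all cross terms $\langle\overline{w_1'}w_2'\rangle$ with $w_1'\neq w_2'$ contribute strictly lower degree, so the top coefficient of $\langle\bar{w}w\rangle$ is preserved as $1$ and the degree is exactly $n$. \textbf{Main obstacle.} The main difficulty is the case analysis in the boundary reduction: one must verify, for every local picture of a superficial non-elliptic web near $\epsilon$, that gluing $\bar{w_1}$ to $w_2$ produces an elliptic face whose reduction lands in one of the three controlled classes (semi-non-elliptic, $1$-elliptic, semi-superficial), and that the bookkeeping of boundary length versus vertex count keeps the degree from ever reaching $n$ in the asymmetric case or exceeding $n$ in the symmetric case. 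In particular the appearance of a \emph{nested} hexagonal face after reduction — the semi-superficial case — is the subtle point the definitions were engineered to handle, and checking that it genuinely arises and is genuinely controlled by Lemma~\ref{lem-semi-superficial} is where I would spend the most care.
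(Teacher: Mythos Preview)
Your high-level plan --- induction on the well-quasi-order of Definition~\ref{dfn:order}, base case $(\emptyset,\emptyset)$, and for the inductive step use that the closed web $\bar{w_1}w_2$ must contain a circle, digon, or square which (since $w_1,w_2$ are non-elliptic) must straddle the gluing locus --- is exactly the paper's strategy. You also correctly identify that after a Kuperberg reduction the resulting half-webs need not be non-elliptic, and that Lemmas~\ref{lem:sneni}, \ref{lem-1-elliptic}, \ref{lem-semi-superficial} are there precisely to push them back into $W$.

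Where your plan diverges from the paper, and where there is a genuine gap, is the second paragraph. You propose to localise at the \emph{leftmost} boundary point and claim that ``gluing produces near the boundary either a digon or a square''. This is not justified: the elliptic face guaranteed by Remark~\ref{req:basic-on-web} exists somewhere in $\bar{w_1}w_2$, but there is no reason it sits at a prescribed extremal boundary point; the faces adjacent to the leftmost glued edge can be hexagons or larger. The paper does \emph{not} pick a special boundary point. Instead it introduces the \emph{border} (Notation~\ref{dfn:border}): one keeps track of the curve along which $\bar{w_1}$ and $w_2$ were glued, finds an arbitrary elliptic face in the closed web, and then performs a case analysis on how that face meets the border. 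This case analysis is the actual content of the proof and is rather delicate --- a circle meeting the border in $2$ versus $\geq 4$ points (Lemma~\ref{lem:circle2nice}); the disconnected case (Lemma~\ref{lem:notconnected2nice}), which you omit entirely; a digon meeting the border once per side, twice on one side, or $\geq 3$ times (Lemmas~\ref{lem:digon2nice1}, \ref{lem:digon2nice2}, Proposition~\ref{prop:digon2nice}); a square meeting the border on opposite sides, adjacent sides, or twice on the same side, the last of these splitting further according to the ``opposed face'' (Lemmas~\ref{lem:square:Opp-side}--\ref{lem:square_same_side2}, Proposition~\ref{prop:square2nice}). In several sub-cases the reduction is not a Kuperberg move at all but a \emph{move of the border} (pushing an arc of the border off the face), which changes $(w_1,w_2)$ and $\epsilon$ without changing $\bar{w_1}w_2$.

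Your bookkeeping of degrees and monicity in the third paragraph is morally right but too coarse as stated. The paper does not argue ``one leading term is $[3]^?\langle\bar{w'}w'\rangle$''; rather, in each sub-case it checks explicitly whether $\w$ symmetric forces the reduced $\w'$ to be symmetric (this holds for the circle-in-two-points, digon-one-per-side, and square-opposite-sides cases, and fails in all others, where one then notes $\w$ itself cannot have been symmetric). That is how monicity of degree $n$ is propagated down the induction in the symmetric case, and how the strict bound $<n$ is obtained in the asymmetric case.
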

\begin{notation}\label{dfn:border}
  If $\w =(w_1,w_2)$ is an element of $W$, then $\bar{w_1}w_2$ is a closed web, in what follows it will be practical to consider the isotopy class of $\bar{w_1}w_2$ but to keep in mind that this closed web has two different parts : the $w_1$ part and the $w_2$ part. In order to remember this, while performing an isotopy of $\bar{w_1}w_2$ we keep track of the real line where $\epsilon$ was lying (this is where $\bar{w_1}$ and $w_2$ are glued together). This curve is the \emph{border} between $w_1$ and $w_2$. This will be depicted by a dashed line. 
\end{notation}
When performing a reduction of $\bar{w_1}w_2$ (removing a circle or a reduction of a square or of a digon), we should keep the border in mind so that the reduction leads to a new closed web $w'$ which can be understood as $\bar{w'_1}w'_2$, where $w'_i$ is the same as $w_i$ except in the place where we perform the reduction. Note that when the reduction takes place next to the border we have to specify how the border behaves with respect to the reduction so that $w'_1$ and $w'_2$ are well defined. We may as well perform moves of the border, this is to be understood as that we change the pair $(w_1, w_2)$ in the way given by the changed border (this may change the boundary).
\begin{lem}\label{lem:circle2nice}
  If  $\w =(w_1,w_2)$ is an element of $W$ such that $\bar{w_1}w_2$ contains a circle $C$ then there exists $\w' =(w'_1,w'_2)$ with $\w'<\w$ such that $\w'$ nice implies $\w$ nice.
\end{lem}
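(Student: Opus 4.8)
The plan is to strip the circle off with the Kuperberg relation $\langle\webcircle[0.4]\rangle = [3]$ from Proposition \ref{prop:Kup} and then to push niceness through the resulting factor $[3] = q^{-2} + 1 + q^{2}$, which is itself symmetric, monic and of degree $2$.

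First I would record the shape of $C$. Since $w_1$ and $w_2$ are non-elliptic, neither of them contains a vertexless loop, so $C$ must cross the border; and since $C$ is vertexless, each arc of $C$ contained in the $\bar{w_1}$-part (resp. in the $w_2$-part) is a single edge of $w_1$ (resp. of $w_2$) joining two of the boundary points. Hence $C$ meets the border in an even number $2k \ge 2$ of points. I would also observe that if $\w = (w_1, w_2)$ is symmetric then necessarily $k = 1$: at any crossing point $x$ the two edges of $\bar{w_1}w_2$ issuing from $x$ are the two copies $\bar{e}$ and $e$ of a single edge $e$ of $w_1 = w_2$, and following $C$ — which cannot meet a trivalent vertex — shows that $C = \bar{e} \cup e$.

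Next I would form $\w' = (w'_1, w'_2)$, where $w'_i$ is obtained from $w_i$ by deleting the $k$ edges of $C$ lying in the $w_i$-part together with the $2k$ boundary points on $C$, keeping the rest of the border. By multiplicativity of the bracket and the circle relation, $\langle\bar{w_1}w_2\rangle = [3]\cdot\langle\bar{w'_1}w'_2\rangle$, and $l(\partial w'_1) = n - 2k < n$, so $\w' < \w$ in the sense of Definition \ref{dfn:order}. To transfer niceness I would first check that $\w$ is symmetric if and only if $\w'$ is: the forward implication follows from the description of $C$ above, and for the converse, if $w'_1 = w'_2$ then $w_1$ and $w_2$ are both obtained from this common web by re-inserting in the unbounded face an arc joining the same two boundary points, and such an arc is unique up to isotopy, so $w_1 = w_2$. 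Granting this, if $\w$ is symmetric then $k = 1$ and niceness of $\w'$ gives that $\langle\bar{w'_1}w'_2\rangle$ is monic of degree $n - 2$; multiplying by $[3]$ then shows $\langle\bar{w_1}w_2\rangle$ is monic of degree $n$, so $\w$ is nice. If $\w$ is not symmetric then neither is $\w'$, so niceness of $\w'$ gives $\deg\langle\bar{w'_1}w'_2\rangle < n - 2k$, whence $\deg\langle\bar{w_1}w_2\rangle = 2 + \deg\langle\bar{w'_1}w'_2\rangle < n - 2k + 2 \le n$, and $\w$ is again nice.

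The step I expect to be the main obstacle — and where I would concentrate the work — is verifying that $\w'$ actually lies in $W$, i.e. that $w'_1$ and $w'_2$ remain superficial and non-elliptic. Superficiality is immediate because deleting edges only merges faces. Non-ellipticity is the delicate point: an edge of $C$ that we remove may bound, together with a segment of the border, a region of $w_i$ containing further edges (a ``bush'' hanging from the boundary points trapped inside), and merging the two faces along such an edge could a priori create a digon or a square. I would deal with this by first performing a border move that slides the border past such a bush — legitimate because superficiality of $w_i$ forces the face just inside the removed edge to be adjacent to the unbounded face, leaving room for the move — so that every removed edge borders the unbounded face; then deleting an edge adjacent to the unbounded face cannot create a new bounded face, hence produces no new digon or square. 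One last point to check in this move is that it does not introduce a nested face in the other half of the picture.
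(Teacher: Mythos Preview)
Your approach is the paper's: delete $C$, pick up the factor $[3]$, and push niceness through. Two points need attention.

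The equivalence ``$\w$ symmetric $\Leftrightarrow$ $\w'$ symmetric'' is false for $k\ge 2$. With $\epsilon=(+,-,+,-)$, let $w_1$ be the two arcs joining $1$ to $2$ and $3$ to $4$, and $w_2$ the two arcs joining $1$ to $4$ and $2$ to $3$; then $\bar{w_1}w_2$ is a single circle with $k=2$, $\w$ is not symmetric, yet $\w'=(\emptyset,\emptyset)$ is. Your converse argument (``re-inserting \emph{an} arc joining the same two boundary points'') only treats $k=1$. This is not fatal, because when $k\ge 2$ one has $\deg\kup{\bar{w'_1}w'_2}\le n-2k$ whether or not $\w'$ is symmetric, hence $\deg\kup{\bar{w_1}w_2}\le n-2k+2\le n-2<n$. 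The paper sidesteps the issue by splitting into the two cases $k=1$ and $k\ge 2$ and only using the equivalence in the first.

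Your ``main obstacle'' is not one. Each arc of $C$ lying in $w_i$ has both endpoints of valence~$1$, so it is a bridge and therefore has the \emph{same} face on both sides --- necessarily the unbounded one, since that is the face visible at the endpoints. Deleting such an edge merges nothing; every bounded face of $w'_i$ is already a bounded face of $w_i$ with the same boundary vertices. Non-ellipticity and superficiality are inherited for free, and no border move is needed. This is why the paper simply writes that $\w'\in W$ is ``clear''.
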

\begin{proof}
  As $w_1$ and $w_2$ are non-elliptic, the circle must intersect the border. We shall consider two cases : the border cuts $C$ in two points or in at least four points\footnote{A circle and the border must intersect in an even number of points for orientation reasons.}.

First consider the case where the border cuts $C$ in two points. Then it separates the $C$ into two half-circles $C_i\subset w_i$ for $i\in\{1,2\}$. We denote $w'_i$ the $\epsilon'$-web $w_i\setminus C_i$. The sequence $\epsilon'$  is equal to $\epsilon$ with a '$+$' and a '$-$' removed hence $l(\epsilon')=l(\epsilon)-2$. We have $\kup{\bar{w_1}w_2} = [3]\kup{\bar{w_1'}w'_2}$ and $\w'$ is symmetric if and only if $\w$ is. It's clear that $\w'=(w'_1,w'_2)$ belongs to $W$ and $\w'<\w$ and that if $\w'$ is nice then $\w$ is nice.

The second case : $C$ meets the border in a least four points. Then $w_1\neq w_2$. Consider once more the pair of $\epsilon'$-webs $\w'=(w'_1,w'_2)$ obtained from $\w$ by removing the circle $C$. The length of $\epsilon'$ is at most $l(\epsilon) -4$. Then if $\w'$ is nice then $\deg \kup{\bar{w'_1}w'_2})\leq l(\epsilon)-4$, and then~:
\[
  \deg \kup{\bar{w_1}w_2}= \deg([3]\kup{\bar{w'_1}w_2'}) = 2 + \deg \kup{\bar{w'_1}w'_2} \leq l(\epsilon) -2.
\]
This is clear that $\w'$ is in $W$ and that $\w'<\w$. And so we are done.
\end{proof}
\begin{lem}\label{lem:notconnected2nice}
  If  $\w =(w_1,w_2)$ is an element of $W$ such that $\bar{w_1}w_2$ is not connected then there exist $\w'$  and $\w''$ with $\w'<\w$ and $\w''<\w$ such that if $\w'$ and $\w''$ are nice then $\w$ is nice.
\end{lem}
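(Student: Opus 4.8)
The plan is to split the disconnected closed web $\bar{w_1}w_2$ along its connected components into two halves, to recognise each half as $\bar{u}v$ for a pair $(u,v)$ in $W$ of strictly smaller complexity, and then to transport niceness back across the multiplicativity of the Kuperberg bracket. First I would record that, because $w_1$ and $w_2$ are non-elliptic, no connected component of $w_1$ or of $w_2$ can be a closed web, since by Remark~\ref{req:basic-on-web} a closed web always contains a digon, a square or a vertexless circle; hence every component of $w_1$ and of $w_2$ meets $\epsilon$, so every connected component of $\bar{w_1}w_2$ meets the border.

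Write $G_1,\dots,G_k$ (with $k\ge2$) for the connected components of $\bar{w_1}w_2$ and fix a partition of $\{G_1,\dots,G_k\}$ into two non-empty families, with unions $G$ and $G'$. A component of $w_i$ cannot have one boundary point lying on $G$ and another on $G'$, since they are joined by a path, so each component of $w_i$ lies entirely on $G$ or entirely on $G'$; let $w'_i$ (resp.\ $w''_i$) be the union of the components of $w_i$ lying on $G$ (resp.\ $G'$), and let $\epsilon'$ (resp.\ $\epsilon''$) be the subsequence of $\epsilon$ made of the boundary points lying on $G$ (resp.\ $G'$). Then $\partial w'_1=\partial w'_2=\epsilon'$ and $\partial w''_1=\partial w''_2=\epsilon''$, once the boundary points are slid to their standard positions using the freedom to move the border (Notation~\ref{dfn:border}), and by construction $\bar{w_1}w_2=\bar{w'_1}w'_2\sqcup\bar{w''_1}w''_2$ as closed webs. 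One must then check that $w'_1,w'_2,w''_1,w''_2$ are again non-elliptic — a digon, square or vertexless circle in one of them would force one in the ambient $w_i$, using that every component of $w_i$ meets $\epsilon$ and that $w_i$ is superficial — and superficial, since deleting components of $w_i$ only enlarges its unbounded face, so that a bounded face stays adjacent to it. Granting this, $\w':=(w'_1,w'_2)$ and $\w'':=(w''_1,w''_2)$ lie in $W$, and since $G$ and $G'$ are non-empty we have $l(\epsilon')<n$ and $l(\epsilon'')<n$, hence $\w'<\w$ and $\w''<\w$ by Definition~\ref{dfn:order}. Multiplicativity of $\kup{\cdot}$ under disjoint union (Proposition~\ref{prop:Kup}) gives
\[
\kup{\bar{w_1}w_2}=\kup{\bar{w'_1}w'_2}\cdot\kup{\bar{w''_1}w''_2}.
\]

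Now assume $\w'$ and $\w''$ are nice and let me deduce that $\w$ is nice. If $\w$ is symmetric, i.e.\ $w_1=w_2$, then $\bar{w_1}w_2$ is invariant under the involution $\theta$ given by reflection across the border followed by reversal of all orientations; $\theta$ fixes the border pointwise, hence fixes each $G_j$ (it permutes the components, each of which meets the border). Since $\theta$ exchanges the two sides of the border and restricts on each side to the mirror-image operation, $w'_2=\theta(\bar{w'_1})=\overline{\bar{w'_1}}=w'_1$, and likewise $w''_1=w''_2$, so $\w'$ and $\w''$ are symmetric and nice; thus $\kup{\bar{w'_1}w'_2}$ and $\kup{\bar{w''_1}w''_2}$ are monic of degrees $l(\epsilon')$ and $l(\epsilon'')$, and since these are symmetric Laurent polynomials their product is monic of degree $l(\epsilon')+l(\epsilon'')=n$, so $\w$ is nice. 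If $\w$ is not symmetric, then $\w'$ and $\w''$ are not both symmetric — otherwise $w_1=w'_1\sqcup w''_1=w'_2\sqcup w''_2=w_2$ — so say $\w'$ is not; then niceness gives $\deg\kup{\bar{w'_1}w'_2}<l(\epsilon')$ while $\deg\kup{\bar{w''_1}w''_2}\le l(\epsilon'')$, so the product has degree $<l(\epsilon')+l(\epsilon'')=n$ and $\w$ is nice.

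The step I expect to be the real obstacle is the topological bookkeeping in the second paragraph: checking that each of $w'_1,w'_2,w''_1,w''_2$, after its boundary points are moved to standard position and the border is adjusted, is an honest superficial non-elliptic $\epsilon'$- or $\epsilon''$-web, and that $\bar{w_1}w_2$ is \emph{literally}, not merely abstractly, the disjoint union $\bar{w'_1}w'_2\sqcup\bar{w''_1}w''_2$. In particular this rests on the fact that superficiality prevents a component of $w_i$ from being enclosed, even partially, by another component, which is what makes the cut along the border well behaved; the remaining part — the $\theta$-symmetry argument and the degree-and-monicity bookkeeping for the Kuperberg bracket — is then routine.
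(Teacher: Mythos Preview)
Your proof is correct and follows essentially the same strategy as the paper's: split $\bar{w_1}w_2$ into a nonempty union of components and its complement, recognise each piece as $\bar{w'_i}w'_i$ for a smaller pair in $W$, and use multiplicativity of the Kuperberg bracket together with the equivalence ``$\w$ symmetric $\Leftrightarrow$ $\w'$ and $\w''$ symmetric'' to transfer niceness. In fact you are considerably more careful than the paper, which simply declares these verifications ``clear'': you justify explicitly why every component meets the border (so $l(\epsilon'),l(\epsilon'')>0$ and $\w',\w''<\w$), why the pieces remain superficial and non-elliptic, and why symmetry is inherited via the reflection $\theta$.
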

\begin{proof}
  Consider one connected component of $\bar{w_1}w_2$ and denote it by $u$ and denote $v$ the complement of $u$ in $\bar{w_1}w_2$. 
Denote $w'_1$ and (resp. $w'_2$) the sub-web of $w_1$ (resp. of $w_2$) such that $\bar{w'_1}w'_2= u$ and let $w''_1$ (resp. $w''_2$) be the complementary web of $w'_1$ in $w_1$. (resp. of $w'_2$ in $w_2$). Denote $\epsilon'$ the boundary of $w'_1$ and $\epsilon''$ the boundary of $w_1''$. Let $\w'$ be $(w'_1,w'_2)$ and $\w''$ be $(w''_1,w''_2)$, it's clear that $\w$ and $\w''$ belong to $W$. We have $l(\epsilon') + l(\epsilon) = l(\epsilon)$, so that $\w'$ and $\w''$ are smaller than $\w$. It's clear that $\w$ is symmetric if and only if $\w'$ and $\w''$ are. We have
\[\kup{\bar{w_1}w_2}=\kup{\bar{w'_1}w'_2}\cdot\kup{\bar{w''_1}w''_2},
\] so that if $\w'$ and $\w''$ are nice, then $\w$ is nice.
\end{proof}

\begin{lem}\label{lem:digon2nice1}
  Let  $\w =(w_1,w_2)$ be an element of $W$ such that $\bar{w_1}w_2$ is connected and contains a digon $B$ which intersects the border of $\w$ in exactly one point per side of the digon. Then there exists $\w' =(w'_1,w'_2)$ with $\w'<\w$ such that $\w'$ nice implies $\w$ nice.
\end{lem}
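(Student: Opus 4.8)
The plan is to reduce the digon $B$ by the Kuperberg bigon relation of Proposition~\ref{prop:Kup}, carrying the border along, and to let $\w' = (w'_1,w'_2)$ be the pair of webs so produced. Write $e_1,e_2$ for the two edges of $B$ and $v,v'$ for its two vertices. Since the border meets $e_1$ in one point, $e_2$ in one point, and no other edge of $\bar{w_1}w_2$ near $B$, the part of the border lying inside the face of $B$ is a single arc whose endpoints are \emph{consecutive} points $p_1,p_2$ of $\epsilon$, and this arc separates $v$ from $v'$; say $v$ sits in the $\bar{w_1}$-part and $v'$ in the $w_2$-part. The bigon reduction deletes $v$, $v'$, $e_1$, $e_2$ and rejoins the two surviving half-edges of $B$ by one new edge, which I isotope so that it crosses the border exactly once, at a new point $p$. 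On the $w_1$-side this erases the vertex corresponding to $v$ and its two edges to $p_1,p_2$, the surviving half-edge now terminating at $p$; the result $w'_1$ is an $\epsilon'$-web with $l(\epsilon') = l(\epsilon)-1$ and one vertex fewer, and symmetrically for $w'_2$. By construction $\kup{\bar{w_1}w_2} = [2]\cdot\kup{\bar{w'_1}w'_2}$.

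First I would check that $\w'\in W$ and $\w'<\w$. Since $l(\partial w'_1) = l(\epsilon)-1 < l(\epsilon) = l(\partial w_1)$, we get $\w'<\w$ for the order of Definition~\ref{dfn:order}; and $\epsilon'$ is admissible because the crossings of the border by $e_1,e_2$ carry the same sign whereas the single new crossing carries the opposite one, so $\sum(\epsilon')^i$ and $\sum\epsilon^i$ differ by $\pm 3$. For $w'_1$ to remain superficial and non-elliptic: among the three faces of $w_1$ at the erased vertex, the one wedged between the edges to $p_1$ and $p_2$ is the unbounded face (as $p_1,p_2$ are consecutive, one escapes to infinity through the boundary arc $[p_1,p_2]$), and deleting a vertex together with two of its edges only merges faces, so it creates no new bounded face and no nested face; hence $w'_1$ acquires no new circle, digon, square or nested face. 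The same holds for $w'_2$, so $\w'\in W$.

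Then I would transfer niceness, using that $[2]=q+q^{-1}$ is symmetric and monic, so multiplication by it preserves symmetry and monicity and raises the degree by $1$. If $\w$ is symmetric, the reflection of $\bar{w_1}w_1$ across the border must interchange $v$ and $v'$ (neither lies on the border), so the reductions on the two halves agree and $w'_1 = w'_2$; then $\w'$ nice makes $\kup{\bar{w'_1}w'_2}$ monic of degree $l(\partial w'_1)$, hence $\kup{\bar{w_1}w_2}$ monic of degree $l(\partial w_1)$, so $\w$ is nice. If $\w$ is not symmetric, then $\w'$ is not symmetric either: passing from $w'_i$ back to $w_i$ amounts to the canonical insertion of a trivalent vertex on the edge through $p$, splitting $p$ into $p_1,p_2$, an operation forced by orientation and planarity, so $w'_1 = w'_2$ would give $w_1 = w_2$; hence $\w'$ nice yields $\deg\kup{\bar{w'_1}w'_2} < l(\partial w'_1) = l(\partial w_1)-1$, whence $\deg\kup{\bar{w_1}w_2} < l(\partial w_1)$, and again $\w$ is nice.

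I expect the delicate part to be the middle step: turning ``erasing that vertex merely enlarges the unbounded face'' into a fully rigorous proof that $w'_1$ and $w'_2$ stay superficial and non-elliptic, together with its mirror-image companion, that non-symmetry of $\w$ is inherited by $\w'$ through the uniqueness of the vertex insertion. The degree arithmetic, once the reduction is set up correctly, is immediate.
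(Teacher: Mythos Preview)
Your proof is correct and follows essentially the same route as the paper: reduce the digon via the Kuperberg relation, keep track of the border so that each $w_i$ loses exactly one trivalent vertex, and then use $\kup{\bar{w_1}w_2}=[2]\kup{\bar{w'_1}w'_2}$ together with $l(\epsilon')=l(\epsilon)-1$. Your treatment is in fact more explicit than the paper's in two places --- the admissibility of $\epsilon'$ and the bi-implication between symmetry of $\w$ and of $\w'$ --- while the paper compresses the non-ellipticity check into the single remark that the removed vertex of $w_i$ is adjacent only to the unbounded face (note that \emph{all three} local faces at $v$ touch the boundary, hence are unbounded, so no bounded face changes at all; this slightly sharpens your ``only merges faces'' formulation).
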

  \begin{figure}[h!]
    \centering
    \begin{tikzpicture}[scale= 0.8]
    \input{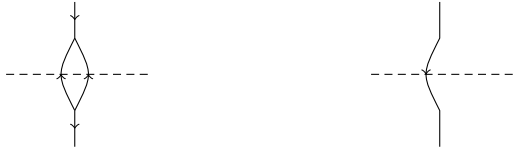}
    \end{tikzpicture}
    \caption{Case of the digon with one vertex by side~: on the left $\bar{w_1}w_2$, on the right $\bar{w'_1}w'_2$.}
    \label{fig:big-diff-side}
  \end{figure}
\begin{proof}
The situation is illustrated on figure \ref{fig:big-diff-side}.  We perform the digon reduction by deleting one edge of the digon, reversing the orientation of the other one and forgetting the two 2-valent vertices. We obtain a new pair of $\epsilon'$-webs $\w'=(w_1',w_2')$. This is an element of $W$, in fact this is clear that the two webs are superficial ; to see that they are non-elliptic, one can notice that from $w_i$ to $w'_i$ we removed just one vertex and this vertex is not adjacent to any face but the unbounded one, and consequently the non-ellipticity is preserved. It's clear that $\w$ is symmetric if and only if $\w'$ is, the length of $\epsilon'$ is equal to $l(\epsilon)-1$ so $\w'<\w$ and we have $\kup{\bar{w_1}w_2}=[2]\kup{\bar{w'_1}w'_2}$, so if $\w'$ is nice then $\w$ is nice.
\end{proof}
\begin{lem}\label{lem:digon2nice2}
  Let  $\w =(w_1,w_2)$ be an element of $W$ such that $\bar{w_1}w_2$ contains a digon $B$ which intersects the border of $\w$ in exactly two points which are on the same side. Then there exists a finite collection $\left(\w^{(i)}\right)$ with $\w^{(i)}<\w$ for all $i$, such that $\w^{(i)}$ nice for all $i$ implies $\w$ nice.
\end{lem}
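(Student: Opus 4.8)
We first dispose of the symmetric case: in the situation of the lemma $\w=(w_1,w_2)$ cannot have $w_1=w_2$. Indeed, if it did, the border would be an (orientation-reversing) reflection axis of the closed web $\bar{w_1}w_2$, and the two points $p,p'$ at which the border meets the distinguished edge $e_1$ of $B$ (the edge it meets twice) would lie on that axis. Since $e_1$ is the unique edge through each of $p$ and $p'$, the reflection would fix $e_1$, hence fix the digon face $D$ lying on one side of it. But the border must dip into $D$ --- otherwise the little region it cuts off along $e_1$ would lie on the $D$-side together with $D$ itself, making $D$ a $2$-gon face of $w_1$ or of $w_2$, contrary to non-ellipticity --- so the reflection would fix the lens $L\subset D$ cut off by an arc of $e_1$ and an arc of the border, which is absurd since $L$ lies strictly on one side of the axis. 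Hence $\w$ is not symmetric, and ``$\w$ nice'' reduces to the single inequality $\deg\kup{\bar{w_1}w_2}<n$, so we need not track monicity.

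The plan is then to reduce the digon after first moving the border out of its way. Write $v,v'$ for the vertices of $B$, $e_1$ for the edge met by the border at $p,p'$, $e_2$ for the edge it does not meet, and $f,f'$ for the third edges at $v,v'$. As observed, the border dips into $D$, cutting off a lens $L\subset D$ on, say, the $w_1$-side; then $v,v',e_2$ and the two stubs $[v,p],[p',v']$ of $e_1$ all lie on the $w_2$-side, while only the middle arc $e_1^0:=[p,p']$ of $e_1$, together with $L$, lies on the $w_1$-side. We isotope the border across $D$ so that all of $D$ lies on the $w_2$-side. This moves only the auxiliary border curve, so $\bar{w_1}w_2$ is unchanged; it produces a pair $\widetilde\w=(\widetilde w_1,\widetilde w_2)$ with $\kup{\bar{\widetilde w_1}\widetilde w_2}=\kup{\bar{w_1}w_2}$ and $l(\partial\widetilde w_1)=n-2$, where $\widetilde w_1$ is $w_1$ with the boundary-to-boundary arc $e_1^0$ removed --- still superficial and non-elliptic, because removing such an arc only merges faces adjacent to the unbounded one --- and $\widetilde w_2$ is $w_2$ with the two stubs of $e_1$ rejoined through $e_1^0$, so that $D$ becomes a genuine digon face. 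Contracting $D$ by the Kuperberg digon move --- $\widetilde w_2=[2]\,w_2^{\star}$ in $S^\epsilon$, with $w_2^{\star}$ obtained by replacing the bigon by a single strand and merging $f$ with $f'$ --- gives
\[
\kup{\bar{w_1}w_2}=[2]\,\kup{\bar{\widetilde w_1}\,w_2^{\star}}.
\]

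It remains to describe $w_2^{\star}$ and conclude. Contracting $D$ disturbs only the two faces $F_1,F_2$ of $\bar{w_1}w_2$ flanking $B$; running through the finitely many ways these faces --- and the border --- can sit relative to the rest of the web should show that $w_2^{\star}$ is superficial and semi-non-elliptic, except in finitely many configurations where it is $1$-elliptic or semi-superficial. In every case Lemma~\ref{lem:sneni} (resp.\ Lemma~\ref{lem-1-elliptic}, Lemma~\ref{lem-semi-superficial}) lets us write $w_2^{\star}=\sum_i P_i v_i$ in $S^\epsilon$ with the $v_i$ superficial non-elliptic webs of boundary length $n-2$ and $P_i\in\NN[q,q\m]$ symmetric of degree at most $1$ --- and of degree $0$ in the semi-non-elliptic case. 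Put $\w^{(i)}=(\widetilde w_1,v_i)$; since $l(\partial\widetilde w_1)=n-2<n=l(\partial w_1)$ each $\w^{(i)}<\w$. If all $\w^{(i)}$ are nice then, $\w$ being non-symmetric, $\deg\kup{\bar{\widetilde w_1}v_i}\le n-2$ for every $i$, so
\[
\deg\kup{\bar{w_1}w_2}=\deg\Big([2]\sum_iP_i\,\kup{\bar{\widetilde w_1}v_i}\Big)\le 1+0+(n-2)=n-1<n
\]
in the semi-non-elliptic case; in the remaining configurations one checks in addition that $\widetilde w_1$ cannot occur in $\sum_iP_iv_i$ with a coefficient $P_i$ of degree $1$, so no degree-$n$ term appears and $\deg\kup{\bar{w_1}w_2}\le n-1$ still holds. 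Either way $\w$ is nice.

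The hard part is the combinatorics hidden in ``running through the finitely many configurations of $F_1,F_2$'': one must pin down exactly when contracting $D$ creates a new digon, a new square (and in which block), or a nested face in $w_2^{\star}$ --- this is precisely what spawns the finite family $(\w^{(i)})$ --- and then secure the degree estimate in the $1$-elliptic and semi-superficial sub-cases. It is also worth recording why the natural-looking shortcut of sliding the border past one vertex of $B$ into the one-point-per-side situation of Lemma~\ref{lem:digon2nice1} does not help: it \emph{increases} $l(\partial w_1)$ by one, whereas the retraction used above decreases it by two, which is exactly what neutralises the degree-$1$ factor $[2]$ coming from the digon relation.
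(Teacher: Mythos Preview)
Your argument is essentially the paper's proof, just rephrased with an explicit ``move the border first'' step; after that move, your $\widetilde w_1$ and $w_2^{\star}$ are exactly the paper's $w'_2$ and $w'_1$ (with the labels of the two halves interchanged). The paper reduces the digon directly in $\bar{w_1}w_2$ and observes that the half containing the two vertices becomes superficial and \emph{semi-non-elliptic}, while the other half (your $\widetilde w_1$) stays superficial non-elliptic; then Lemma~\ref{lem:sneni} alone finishes, with integer coefficients and the clean bound $\deg\kup{\bar{w_1}w_2}=1+\max_i\deg\kup{\bar{w_1^{(i)}}w'_2}\le 1+(n-2)=n-1$.

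The place where you diverge from the paper is the hedge allowing $w_2^{\star}$ to be $1$-elliptic or semi-superficial. This case split is unnecessary, and it is also the one place where your write-up has an actual gap (the unproved ``one checks that $\widetilde w_1$ cannot occur with a degree-$1$ coefficient''). The observation you are missing is simple: when you contract the digon $D$, the \emph{only} bounded face whose side count drops is the face $F_T$ on the far side of $e_2$, and it drops by exactly two (the three consecutive edges $f,e_2,f'$ on $\partial F_T$ merge into one). Since $w_2$ is non-elliptic, $F_T$ had at least six sides, so afterwards it has at least four; no digon is created, at most one square appears, and no face becomes nested. Hence $w_2^{\star}$ is always superficial and semi-non-elliptic, only Lemma~\ref{lem:sneni} is needed, the $P_i$ are positive integers, and your degree estimate goes through without any further casework. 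Your non-symmetry discussion is also more elaborate than needed: once both vertices of $B$ lie on one side, the edge of $w_2$ at the boundary point $p$ runs straight to the boundary point $p'$, while the edge of $w_1$ at $p$ terminates at the trivalent vertex $v$, so $w_1\neq w_2$.
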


\begin{figure}[h!]
    \centering
    \begin{tikzpicture}[scale= 0.8]
     \begin{scope}[scale=0.6,decoration={markings, mark=at
         position 0.5 with {\arrow{>}}},postaction={decorate}]
          \draw[densely dashed] (0.7,2.5) .. controls (0.5,2.5) and (0,2.5) .. (0,2) 
          .. controls (0,1.5) and (0.5,1.5) .. (0.7,1.5) ;
         \draw[postaction= {decorate}] (0,4) -- (0,3);
         \draw[postaction= {decorate}] (0,1) -- (0,0);
         \draw[postaction= {decorate}] (0,1) .. controls (0.5,2) and (0.5,2) ..(0,3);
         \draw[postaction= {decorate}] (0,1) .. controls (-0.5,2) and (-0.5,2) ..(0,3);    \end{scope}
    \begin{scope}[xshift = 6cm, scale=0.6,decoration={markings, mark=at
        position 0.5 with {\arrow{>}}},postaction={decorate}]
       \draw[densely dashed] (0.7,2.5) .. controls (0.5,2.5) and (0,2.5) .. (0,2) 
          .. controls (0,1.5) and (0.5,1.5) .. (0.7,1.5) ;
         \draw[postaction=decorate] (0,4) .. controls (0,3.5) and (0,3.5) ..(0,3).. controls (-0.5,2) and (-0.5,2) .. (0,1) .. controls (0,0.5) and (0,0.5).. (0,0);
    \end{scope}
    \end{tikzpicture}
    \caption{Case of the digon with the two vertices on the same side~: on the left $\bar{w_1}w_2$, on the right $\bar{w'_1}{w'_2}$.}
    \label{fig:big-same-side}
  \end{figure}
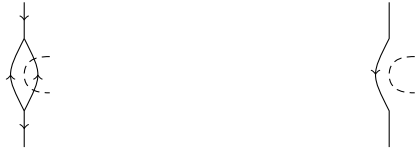

\begin{proof}
  The situation is illustrated by figure \ref{fig:big-same-side}. In this case $\w$ cannot be symmetric. Without loss of generality\footnote{The two vertices are on  the same side since one of the edges joining them does not meet the border.} we can suppose that the two vertices of $B$ are in $w_1$.  We reduce the digon $B$ as follows : delete the side meeting the border, reverse the orientation of the other edge, and forget the two 2-valent vertices denote $\w'$ the new pair of $\epsilon'$-webs corresponding to the situation. The length of $\epsilon'$ is equal to $l(\epsilon)-2$. The $\epsilon'$-web $w'_1$ is clearly superficial and semi-non-elliptic so we can apply lemma \ref{lem:sneni} and we have a finite collection $w_1^{(i)}$ of superfical non-elliptic $\epsilon'$-webs such that in the skein module  $S^{\epsilon'}$ we have $w'_1 = \sum_i \lambda w_i^{(i)}$ for some positive integer $\lambda_i$. On the other hand $w'_2$ is clearly superfical and non-elliptic. Denote $\w^{(i)}=(w_i^{(i)}, w'_2)$. Suppose that all the $\w^{(i)}$ are nice. We have :
\[
\deg\kup{\bar{w_1}w_2} = \deg\left([2]\kup{\bar{w'_1}w'_2}\right) = 1+ \max_{i} \deg \kup{\bar{w_1^{(i)}}w'_2} \leq 1 + l(\epsilon') = l(\epsilon) -1
\]
And this shows that $\w$ is nice.
\end{proof}

\begin{prop}\label{prop:digon2nice}
  Let  $\w =(w_1,w_2)$ be an element of $W$ such that $\bar{w_1}w_2$ is connected and contains a digon $B$. Then there exists a finite collection $\left(\w^{(i)}\right)$ with $\w^{(i)}<\w$ for all $i$, such that $\w^{(i)}$ nice for all $i$ implies $\w$ nice.
\end{prop}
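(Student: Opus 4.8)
The plan is to run a case analysis on the number of intersection points of $\partial B$ with the border $\gamma$ of $\w$, exactly as in the circle case of Lemma \ref{lem:circle2nice}, and to fall back on Lemmas \ref{lem:digon2nice1} and \ref{lem:digon2nice2}. Since $w_1$ and $w_2$ are non-elliptic they contain no digon, so $B$ cannot lie entirely in the $w_1$-part or in the $w_2$-part of $\bar{w_1}w_2$; hence the closed curve $\partial B$ crosses $\gamma$ transversally in a nonzero, necessarily even, number $2k$ of points (viewing $\bar{w_1}w_2$ on $S^2$ if needed, so that $B$ is a disk). If $2k=2$ the two points lie either on a single edge of $B$ or one on each of its two edges, and these are precisely the hypotheses of Lemma \ref{lem:digon2nice2} and of Lemma \ref{lem:digon2nice1}; the collection furnished by the relevant lemma is then the one we want.

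So the work is in the case $2k\ge 4$. Here I would not reduce the digon directly, but first simplify the position of the border by an isotopy, observing that moving $\gamma$ does not change the closed web $\bar{w_1}w_2$ — only the way it is cut into a $w_1$-part and a $w_2$-part — and in particular leaves $\kup{\bar{w_1}w_2}$ unchanged. The arcs of $\gamma\cap\overline{B}$ are disjointly and properly embedded in the disk $B$, hence form a non-crossing system; choosing an outermost one $\alpha$, it cuts off a sub-disk $D\subseteq\overline{B}$ with $\partial D=\alpha\cup\beta$, $\beta\subseteq\partial B$, and $\operatorname{int} D$ meeting neither $\gamma$ nor $w$ (as it lies in the face $B$). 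If $\beta$ contains no trivalent vertex, push $\gamma$ across $D$ and slightly past $\beta$ into the adjacent face: this yields a new pair $\w'=(w'_1,w'_2)$, still in $W$ (superficiality and non-ellipticity are unaffected, since only the boundary structure near $B$ changes), with $\kup{\bar{w'_1}w'_2}=\kup{\bar{w_1}w_2}$ and $l(\partial w'_1)=l(\partial w_1)-2$, so $\w'<\w$ for the order of Definition \ref{dfn:order}. One then checks that $\w'$ nice forces $\w$ nice: nice-ness gives $\deg\kup{\bar{w'_1}w'_2}\le l(\partial w'_1)$ in both the symmetric and the non-symmetric case, whence $\deg\kup{\bar{w_1}w_2}<l(\partial w_1)$; but a symmetric pair always satisfies $\deg\kup{\bar{w_1}w_1}\ge l(\partial w_1)$ (the identity of $P_{w_1}$ is a nonzero degree-$0$ endomorphism and the bracket is symmetric, as in the proof of Lemma \ref{lem:monic2indec}), so $\w$ must be non-symmetric, and then the displayed bound is exactly its nice-ness condition. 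Thus the singleton $\{\w'\}$ is the desired collection.

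I expect the main difficulty to be the remaining possibility that \emph{every} outermost arc has its boundary arc $\beta$ straddling one of the two vertices of the digon, so that pushing $\gamma$ across $D$ merely re-routes it over a vertex with no gain. A short argument on non-crossing matchings shows that there are then exactly two such outermost arcs, each ``hooking'' one vertex of $B$ from inside the face $B$. In that case I would re-route a single hook, so that $\gamma$ passes the corresponding vertex through the external edge leaving that vertex instead of dipping into $B$; this is again a border isotopy, it strictly decreases $l(\partial w_1)$, and it keeps $w'_1,w'_2$ non-elliptic (the face of $w'_i$ coming from $B$ now has a piece of its boundary on the web-tangle boundary $\epsilon'$, so it is not a genuine digon) and superficial, so one again gets $\w'<\w$ with $\kup{\bar{w'_1}w'_2}=\kup{\bar{w_1}w_2}$ and the same conclusion. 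Carrying out this case distinction, together with the routine verifications that each intermediate pair lies in $W$ and is strictly smaller, is the bulk of the proof; granting it, Proposition \ref{prop:digon2nice} holds, and, combined with Lemmas \ref{lem:digon2nice1} and \ref{lem:digon2nice2} and the earlier lemmas, this completes the well-founded induction proving Proposition \ref{prop:superfical2monic}.
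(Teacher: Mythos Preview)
Your proof is correct and follows essentially the same route as the paper: handle the case of two intersection points via Lemmas \ref{lem:digon2nice1} and \ref{lem:digon2nice2}, and for $\ge 4$ intersection points push an outermost arc of the border out of the digon disk, dropping $l(\epsilon)$ by $2$ (same-side endpoints) or by $1$ (endpoints on different sides, i.e.\ your ``hooking'' move over a vertex and through the external edge). Two minor remarks: your detour through ``\emph{every} outermost arc straddles a vertex'' is unnecessary, since a single outermost arc of either type already suffices for the move (the paper simply picks one and pushes it out, as in Figure~\ref{fig:digon2nice-moves}); and your indirect argument for non-symmetry via $\deg\kup{\bar{w_1}w_1}\ge l(\epsilon)$ is a pleasant addition, whereas the paper just asserts that $\w$ cannot be symmetric when the border meets $B$ at least three times.
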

\begin{proof}
  If we are in the situation of \ref{lem:digon2nice1} and \ref{lem:digon2nice2} this is already done, so suppose that the border meets the digon $B$ at least 3 times. The situation is illustrated on figure \ref{fig:digon2nice-moves}. In this case $\w$ cannot be symmetric. We denote $D$ the disk delimited by the digon. As the web $\bar{w_1}w_2$ is connected we can suppose that the interior of $D$ is disjoint from the web. Consider now the restriction of the border to $D$. It's the reunion of different arcs. Push one outer arc $a$ outside $D$. This leads to a new pair of $\epsilon'$-webs $\w'=(w'_1,w'_2)$. The length of $\epsilon'$ is equal to $l(\epsilon)-1$ (when the two extremities of $a$ lie on two different sides of $B$) or to $l(\epsilon)-2$ (when the two extremities of $B$ lie on the same side of $B$). The web $\w'$ is in $W$ : The operation that we described does not disturb the non-ellipticity condition neither the superficiality condition. Furthermore $\w'<\w$, and if $\w'$ is nice $\deg \kup{\bar{w_1}w_2} \leq l(\epsilon) -1$ so that $\w$ is nice.
\end{proof}
\begin{figure}[h!]
    \centering
    \begin{tikzpicture}[scale= 0.8]
    \begin{scope}[scale=0.6]
\draw (2.5,2) node {$\rightarrow$};
\draw (16.5,2) node {$\rightarrow$};
 \begin{scope}[ scale=1,decoration={markings, mark=at
        position 0.5 with {\arrow{>}}},postaction={decorate}]
         \draw[densely dashed] (1.5,2.5) -- (-1.5,2.5);
        \draw[postaction= {decorate}] (0,4) -- (0,3);
        \draw[postaction= {decorate}] (0,1) -- (0,0);
        \draw[postaction= {decorate}] (0,1) .. controls (0.5,2) and (0.5,2) ..(0,3);
        \draw[postaction= {decorate}] (0,1) .. controls (-0.5,2) and (-0.5,2) ..(0,3); 
    \end{scope}
    \begin{scope}[ xshift = 5cm, scale=1,decoration={markings, mark=at
        position 0.5 with {\arrow{>}}},postaction={decorate}]
         \draw[densely dashed] (1.5,2.5) .. controls (0.7,2.5) and (0.2, 3.2) .. (0,3.2)..  controls (-0.2,3.2) and (-0.7, 2.5) ..  (-1.5,2.5);
        \draw[postaction= {decorate}] (0,4) -- (0,3);
        \draw[postaction= {decorate}] (0,1) -- (0,0);
        \draw[postaction= {decorate}] (0,1) .. controls (0.5,2) and (0.5,2) ..(0,3);
        \draw[postaction= {decorate}] (0,1) .. controls (-0.5,2) and (-0.5,2) ..(0,3); 
    \end{scope}
 \begin{scope}[xshift= 14cm, scale=1,decoration={markings, mark=at
         position 0.5 with {\arrow{>}}},postaction={decorate}]
          \draw[densely dashed] (0.9,2.5) .. controls (0.5,2.5) and (0,2.5) .. (0,2) 
          .. controls (0,1.5) and (0.5,1.5) .. (0.9,1.5) ;
         \draw[postaction= {decorate}] (0,4) -- (0,3);
         \draw[postaction= {decorate}] (0,1) -- (0,0);
         \draw[postaction= {decorate}] (0,1) .. controls (0.5,2) and (0.5,2) ..(0,3);
         \draw[postaction= {decorate}] (0,1) .. controls (-0.5,2) and (-0.5,2) ..(0,3);    
       \end{scope}
    \begin{scope}[xshift = 19cm, scale=1,decoration={markings, mark=at
        position 0.5 with {\arrow{>}}},postaction={decorate}]
         \draw[densely dashed] (0.9,2.5) .. controls (0.7,2.5) and (0.6,2.5) .. (0.6,2) 
          .. controls (0.6,1.5) and (0.7,1.5) .. (0.9,1.5) ;
         \draw[postaction= {decorate}] (0,4) -- (0,3);
         \draw[postaction= {decorate}] (0,1) -- (0,0);
         \draw[postaction= {decorate}] (0,1) .. controls (0.5,2) and (0.5,2) ..(0,3);
         \draw[postaction= {decorate}] (0,1) .. controls (-0.5,2) and (-0.5,2) ..(0,3);     
    \end{scope}

\end{scope}[scale =0.6]
    \end{tikzpicture}
    \caption{Remaining cases for the digon : we move the boundary. On the left the outer arc meets the two edges, on the right it meets two times the same edge. In both cases only the outer arc of the boundary is drawn, but the boundary meets the digon elsewhere as well.}
    \label{fig:digon2nice-moves}
  \end{figure}
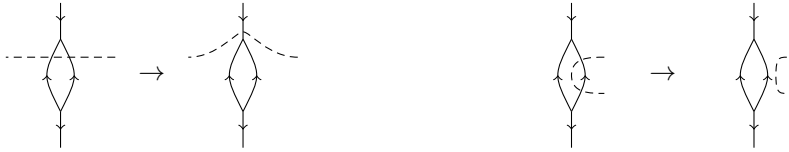
  \begin{lem}\label{lem:square:Opp-side}
Let  $\w =(w_1,w_2)$ be an element of $W$ such that $\bar{w_1}w_2$ is connected and contains a square $S$ such that $S$ intersects the border of $\w$ in two points on opposite sides. Then there exists a finite collection $\left(\w^{(k)}\right)$ with $\w^{(k)}<\w$ for all $k$, such that if all the $\w^{(k)}$ are nice then $\w$ is nice. 
  \end{lem}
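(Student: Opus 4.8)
The plan is to apply the Kuperberg square relation of Proposition~\ref{prop:Kup} to $S$ and to follow the border of $\w$ through the two resolutions, as in the preceding lemmas. The square $S$ has four legs, namely the external edges at its four vertices. Since the two edges of $S$ met by the border each join a vertex of $w_1$ to a vertex of $w_2$, while the two not met by the border stay inside $w_1$, resp.\ inside $w_2$, the four legs split as two legs of $w_1$ and two legs of $w_2$. Of the two admissible pairings of the legs, one --- call it resolution~$\mathrm{I}$ --- joins the two legs of $w_1$ to one another and the two legs of $w_2$ to one another; the two resulting arcs are then disjoint from the border, the two points of $\partial\w$ that sat on $S$ disappear, and we obtain a pair $\w' = (w'_1,w'_2)$ with $l(\partial w'_1) = n-2$. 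The other, resolution~$\mathrm{II}$, smooths $S$ "across" the border through the two edges it meets; each new arc is then met once by the border, so the boundary length stays $n$, but each of $w''_1,w''_2$ has two fewer vertices than the corresponding $w_i$. In both cases the surgery takes place along faces of $\bar{w_1}w_2$ whose parts in $w_1$ and $w_2$ abut the unbounded face (the border reaches $S$ through the boundary), so $w'_1,w'_2,w''_1,w''_2$ are all superficial.

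These four webs need not be non-elliptic: smoothing two vertices can merge faces into a digon, a square or a circle. A finite case check on the (at most three) faces of $w_i$ around each deleted vertex --- using that $w_i$ is itself superficial and non-elliptic and that one of the faces around $S$ is the unbounded face --- shows that each of the four webs is at worst semi-non-elliptic, and in the few degenerate configurations $1$-elliptic or semi-superficial. Lemmas~\ref{lem:sneni}, \ref{lem-1-elliptic} and \ref{lem-semi-superficial} then express each of $w'_1,w'_2,w''_1,w''_2$ in the appropriate skein module as a sum $\sum_i P_i a_i$ with $P_i\in\mathbb{N}[q,q\m]$ symmetric of degree at most $1$ (degree $0$ in the semi-non-elliptic case) and the $a_i$ superficial non-elliptic with no more vertices. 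Expanding $\kup{\bar{w'_1}w'_2}$ and $\kup{\bar{w''_1}w''_2}$ bilinearly yields the finite collection $\left(\w^{(k)}\right)$ of the pairs of superficial non-elliptic webs that occur; each has boundary length $n-2$, and is then $<\w$ for the order of Definition~\ref{dfn:order}, or boundary length $n$ with strictly fewer vertices, and is then again $<\w$.

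It remains to deduce the niceness of $\w$ from that of all the $\w^{(k)}$, using $\kup{\bar{w_1}w_2} = \kup{\bar{w'_1}w'_2} + \kup{\bar{w''_1}w''_2}$. The resolution~$\mathrm{I}$ term has boundary length $n-2$ and (one checks) only degree-$0$ coefficients, hence degree $\le n-2$. For resolution~$\mathrm{II}$ one must verify that $w''_1$ and $w''_2$ are honestly non-elliptic and that $w''_1 = w''_2$ exactly when $w_1 = w_2$. Granting this: if $\w$ is not symmetric, then $\kup{\bar{w''_1}w''_2}$ is the bracket of a non-symmetric pair of boundary length $n$, which has degree $<n$ by niceness, so $\deg\kup{\bar{w_1}w_2}<n$; if $\w$ is symmetric, the reflection exchanging $w_1$ and $w_2$ fixes $S$ and each resolution, so both $\w'$ and $\w''$ are symmetric, $\kup{\bar{w''_1}w''_1}$ is monic of degree $n$ by niceness, and since the other term contributes only in degree $\le n-2$, $\kup{\bar{w_1}w_2}$ is monic of degree $n$. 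In either case $\w$ is nice.

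The crux is the geometry of the second and third paragraphs: one must enumerate the possible shapes of the faces of $w_1$ and $w_2$ abutting $S$ and the border, decide in each configuration which of the three controlled ellipticity types the resolved webs have, and --- most importantly --- check in the symmetric situation that resolution~$\mathrm{II}$ lands on a genuinely non-elliptic web, since otherwise the degree-$1$ coefficients permitted by Lemma~\ref{lem-1-elliptic} would spoil the monicity of $\kup{\bar{w_1}w_2}$.
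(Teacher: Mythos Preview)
Your approach is the same as the paper's: resolve the square, track the border through both resolutions, and reduce to smaller pairs. The skeleton is right, but you explicitly leave the key geometric checks open (``one must verify'', ``granting this'', ``one must enumerate''), and your hedging about which of Lemmas~\ref{lem:sneni}, \ref{lem-1-elliptic}, \ref{lem-semi-superficial} applies creates a real tension: if resolution~I could be $1$-elliptic, its expansion would carry degree-$1$ coefficients on \emph{both} sides, giving a contribution of degree up to $(n-2)+2=n$ that could spoil monicity in the symmetric case --- so your parenthetical ``(one checks) only degree-$0$ coefficients'' is not innocent.

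The paper settles this cleanly and more sharply than you anticipate. In $w_i$ the square contributes a half-square: two vertices $a,b$ joined by the side $e\subset w_i$ not meeting the border, two legs $l_a,l_b$, and two half-edges $h_a,h_b$ running to the boundary. The only \emph{bounded} face of $w_i$ meeting this configuration is the face $F$ on the far side of $e$; the regions flanking $h_a$ and $h_b$ already touch the boundary and belong to the unbounded face. In your resolution~II (delete $e$, keep $h_a,h_b$) the face $F$ merges with the unbounded face and no other bounded face is touched, so $w''_i$ is \emph{always} superficial and non-elliptic --- not just in the symmetric situation. In your resolution~I (delete $h_a,h_b$, keep $e$) the three edges $l_a,e,l_b$ merge into one, so $F$ loses exactly two sides and may become a single square, but nothing worse; thus $w'_i$ is superficial and semi-non-elliptic. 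Hence only Lemma~\ref{lem:sneni} (positive \emph{integer} coefficients) is ever needed: the resolution~I term has degree $\le n-2$, the resolution~II term is a single element of $W$ strictly below $\w$ (same boundary, fewer vertices), and $\w$ is symmetric if and only if that element is, since the passage $w_i\mapsto w''_i$ just deletes $e$ at a location fixed by $\partial\w$ and is therefore reversible. Niceness of $\w$ follows at once.
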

\begin{figure}[h!]
    \centering
    \begin{tikzpicture}[scale = 0.8]
    \begin{scope}[scale={1},decoration={markings, mark=at
     position 0.5 with {\arrow{>}}},postaction={decorate}]
   \draw[densely dashed] (0,1.5) --(3,1.5);
      \draw[postaction = {decorate}] (0.5,0.5)--(1,1);
      \draw[postaction = {decorate}] (2.5,2.5)--(2,2);
      \draw[postaction = {decorate}] (2,1)--(1,1);
      \draw[postaction = {decorate}] (2,1)--(2,2);
      \draw[postaction = {decorate}] (2,1)--(2.5,0.5);
      \draw[postaction = {decorate}] (1,2)--(2,2);
      \draw[postaction = {decorate}] (1,2)--(1,1);
      \draw[postaction = {decorate}] (1,2)--(0.5,2.5);
 \end{scope}

\begin{scope}[xshift = 4cm, scale={1},decoration={markings, mark=at
     position 0.5 with {\arrow{>}}},postaction={decorate}]
      \draw[densely dashed] (0,1.5) --(3,1.5);
      \draw[postaction = {decorate}] (0.5,0.5) .. controls (1,1.5) and (1,1.5) .. (0.5, 2.5);
      \draw[postaction = {decorate}] (2.5,2.5).. controls (2,1.5) and (2, 1.5) ..(2.5,0.5);  
 \end{scope}

\begin{scope}[xshift = 8cm, scale={1},decoration={markings, mark=at
     position 0.5 with {\arrow{>}}},postaction={decorate}]
      \draw[densely dashed] (0,1.5) --(3,1.5);
      \draw[postaction = {decorate}] (0.5,0.5) .. controls (1.5,1) and (1.5,1) .. (2.5, 0.5);
      \draw[postaction = {decorate}] (2.5,2.5).. controls (1.5,2) and (1.5,2) ..(0.5,2.5);  
 \end{scope}
    \end{tikzpicture}
    \caption{The border meets two opposite sides of the square : from left to right~: $\bar{w_1}w_2$, $\bar{w'_1}w'_2$ and $\bar{w''_1}{w''_2}$. It's clear that $w'_1$ and $w'_2$ are superficial non-elliptic, and that $w''_1$ and $w''_2$ are superfical semi-non-ellipitic. Furthermore, $w_1=w_2$ if and only if $w'_1=w'_2$.}
    \label{fig:square-sym}
  \end{figure}
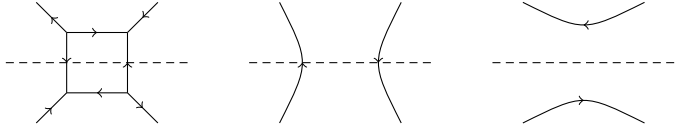
  \begin{proof} The situation is illustrated by figure \ref{fig:square-sym}.
The connectedness hypothesis tells us that we can suppose the interior of the disk $D$ delineated by $S$ to be disjoint from the web. In $D$, the border is just a simple arc joining two opposite sides. We perform the two reductions of the square by deleting two opposite sides, reversing the orientations on the two lasting sides and forgetting the four 2-valent vertices. We obtain one pair of $\epsilon$-webs  $\w'=(w'_1,w_2')$ (when keeping the sides which meet the border) and one pair of $\epsilon''$-webs $\w''=(w''_1,w''_2)$ with $l(\epsilon'')=l(\epsilon)-2$ (when deleting the sides which meet the border). The $\epsilon$-webs $w'_1$ and $w'_2$ are superficial and non-elliptic so that $\w'\in W$. Because of the number of vertices we have $\w' < \w$. The $\epsilon''$-webs $w''_1$ and $w''_2$ are superficial and semi-non-elliptic. Then there exists a finite collection $\left(w_1^{(i)}\right)$  (resp.~{}$\left(w_2^{(j)}\right)$) of superficial non-elliptic $\epsilon''$-webs and some positive integers $\lambda_i$ (resp.~{}$\mu_j$) such that in the skein module $S^{\epsilon''}$ we have $
w''_1 = \sum_i \lambda_i w_1^{(i)}$ and $ w''_2 = \sum_j \mu_j w_2^{(j)}$. We have then :
\[
\kup{\bar{w_1}w_2} = \kup{\bar{w'_1}w'_2} + \sum_{i,j}\lambda_i\mu_j\kup{\bar{w_1^{(i)}}w_2^{(j)}}.
\] We denote $\w^{(i,j)}=(w_1^{(i)},w_2^{(j)})$. Suppose that $\w'$ is nice and that all the $\w^{(i,j)}$ are nice. It's straightforward that $\w$ is symmetric if and only if $\w'$ is, so that it's clear that, $\w$ is nice.
  \end{proof}
  \begin{lem}\label{lem:square:adj-side}
Let  $\w =(w_1,w_2)$ be an element of $W$ such that $\bar{w_1}w_2$ is connected and contains a square $S$ such that $S$ intersects the border of $\w$ in two points on adjacent sides. Then there exists a finite collection $\left(\w^{(k)}\right)$ with $\w^{(k)}<\w$ for all $k$, such that if all the $\w^{(k)}$ are nice then $\w$ is nice.     
  \end{lem}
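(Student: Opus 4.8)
The plan is to mimic the proof of Lemma~\ref{lem:square:Opp-side} by performing the two square reductions at $S$; the point is that when $S$ meets the border on \emph{adjacent} sides both reductions shorten the boundary. Set $n=l(\partial w_1)=l(\partial w_2)$. Using that $\bar{w_1}w_2$ is connected, I would first isotope so that the open disk $D$ bounded by $S$ is disjoint from the web; then $\mathrm{border}\cap D$ is a simple arc joining two adjacent sides $e_1,e_2$ of $S$, crossing them at $p_1\in e_1$ and $p_2\in e_2$, and I write $v=e_1\cap e_2$ for the web-vertex shared by $e_1$ and $e_2$. I claim $\w$ cannot be symmetric: if it were, the mirror symmetry $\sigma$ of $\bar{w_1}w_1$ would fix every point at which the web crosses the border, in particular $p_1$ and $p_2$; it would then preserve the edges $e_1\ni p_1$ and $e_2\ni p_2$, hence fix $v$, contradicting the fact that $v$ is a vertex of the web while the border (which carries all fixed points of $\sigma$) avoids the vertices. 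Thus it is enough to produce a finite family $(\w^{(k)})$ with $\w^{(k)}<\w$ whose niceness forces $\deg\kup{\bar{w_1}w_2}<n$.

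Apply the square relation: $\kup{\bar{w_1}w_2}=\kup{u_{13}}+\kup{u_{24}}$, where $u_{13}$ (resp.~$u_{24}$) deletes the opposite pair of sides $\{e_1,e_3\}$ (resp.~$\{e_2,e_4\}$), reverses the orientations of the two surviving sides, and forgets the four $2$-valent vertices. Unlike the opposite-sides case, in $u_{13}$ the crossing $p_1$ lies on a deleted side and disappears while $p_2$ survives on the edge obtained by merging $e_2$ with the two legs at its endpoints (and symmetrically in $u_{24}$); so, viewed as pairs $\w^{(13)},\w^{(24)}\in W$, both have boundary of length $n-1$, whence $\w^{(13)},\w^{(24)}<\w$ by the first clause of Definition~\ref{dfn:order}. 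I would then check --- by the block-by-block inspection of the four faces $A,B,C,D$ around $S$ used in the proof of Lemma~\ref{lem:sneni}, using the emptiness of $D$, the superficiality of $w_1$ and $w_2$, and a case split on which of $A,B,C,D$ is the unbounded face --- that each of $w_1^{(13)},w_2^{(13)},w_1^{(24)},w_2^{(24)}$ is superficial and semi-non-elliptic; then Lemma~\ref{lem:sneni} rewrites each of them, in the relevant skein module, as a sum with coefficients in $\mathbb{N}$ of superficial non-elliptic webs with strictly fewer vertices.

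Distributing these expansions, $\kup{\bar{w_1}w_2}$ becomes a finite $\mathbb{N}[q,q^{-1}]$-combination $\sum_{i,j}\lambda_{i,j}\kup{\bar{a_i}b_j}$ in which every $\w^{(i,j)}=(a_i,b_j)$ is a pair of superficial non-elliptic webs with boundary of length $n-1$, so $\w^{(i,j)}<\w$. If all the $\w^{(i,j)}$ are nice then $\deg\kup{\bar{a_i}b_j}\le n-1$ for every $i,j$ (a symmetric $\w^{(i,j)}$ only contributes a bracket monic of degree $n-1<n$), hence $\deg\kup{\bar{w_1}w_2}\le n-1<n$; since $\w$ is not symmetric this says precisely that $\w$ is nice, and $(\w^{(i,j)})$ is the desired collection. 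The step I expect to be the real obstacle is the verification in the previous paragraph that the two reductions, performed next to the border, never trap a face (so that superficiality is preserved) and put at most one square in each block (so that Lemma~\ref{lem:sneni} applies): this is the delicate piece of planar bookkeeping, and it is exactly where the emptiness of $D$ and the superficiality of $w_1,w_2$ are used.
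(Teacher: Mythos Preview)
Your approach is correct, but it is genuinely different from the paper's and somewhat more laborious. The paper does \emph{not} perform the square reduction directly. Instead it moves the border first: the arc of the border inside $D$ is pushed out of $S$ through the common vertex $v=e_1\cap e_2$, yielding a pair $\w'=(w'_1,w'_2)$ with $l(\epsilon')=l(\epsilon)-1$. After this move the entire square sits on one side (say in $w'_1$), so $w'_2$ is obtained from $w_2$ by deleting a single boundary-adjacent trivalent vertex and is \emph{immediately} superficial non-elliptic with no checking needed; only $w'_1$ must be inspected, and it is superficial with the single square $S$, hence semi-non-elliptic. One application of Lemma~\ref{lem:sneni} (to $w'_1$ alone) then finishes. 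Your route, by contrast, reduces the square first and must then verify four half-webs; the checks do go through (on the $w_2$ side only the vertex $v$ is lost and no bounded face changes; on the $w_1$ side exactly one bounded face, the one across from the deleted pair, loses two vertices), so the outcome is the same, but the paper's border-move makes the bookkeeping asymmetric and lighter.

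Two small remarks on your write-up. First, the ``case split on which of $A,B,C,D$ is the unbounded face'' is not the right frame here: that split belongs to Lemma~\ref{lem:sneni}, where the square lies entirely inside one $\epsilon$-web. In your situation the square straddles the border, so the unbounded face of each $w_i$ is already pinned down (it is the one touching the border), and the verification is a direct computation rather than a case analysis. Second, after your reduction the two $w_2$-halves are in fact already non-elliptic, not merely semi-non-elliptic; you only need Lemma~\ref{lem:sneni} on the $w_1$-halves --- which is another hint that pushing the vertex across the border first is the natural simplification.
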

 \begin{figure}[h!]
   \centering
   \begin{tikzpicture}[scale= 0.8]
     \begin{scope}[scale={1},decoration={markings, mark=at
     position 0.5 with {\arrow{>}}},postaction={decorate}]
       \draw[densely dashed] (0.5,1.5) .. controls (1,1.5) and (1.5,1) .. (1.5, 0.5);
      \draw[postaction = {decorate}] (0.5,0.5)--(1,1);
      \draw[postaction = {decorate}] (2.5,2.5)--(2,2);
      \draw[postaction = {decorate}] (2,1)--(1,1);
      \draw[postaction = {decorate}] (2,1)--(2,2);
      \draw[postaction = {decorate}] (2,1)--(2.5,0.5);
      \draw[postaction = {decorate}] (1,2)--(2,2);
      \draw[postaction = {decorate}] (1,2)--(1,1);
      \draw[postaction = {decorate}] (1,2)--(0.5,2.5);
 \end{scope}
\begin{scope}[xshift = 5cm, scale={1},decoration={markings, mark=at
     position 0.5 with {\arrow{>}}},postaction={decorate}]
       \draw[densely dashed] (0.5,1.5) .. controls (0.8,1.5) and (0.7,1.1) .. (0.9, 0.9) .. controls (1.1,0.7) and (1.5,0.8).. (1.5,0.5);
      \draw[postaction = {decorate}] (0.5,0.5)--(1,1);
      \draw[postaction = {decorate}] (2.5,2.5)--(2,2);
      \draw[postaction = {decorate}] (2,1)--(1,1);
      \draw[postaction = {decorate}] (2,1)--(2,2);
      \draw[postaction = {decorate}] (2,1)--(2.5,0.5);
      \draw[postaction = {decorate}] (1,2)--(2,2);
      \draw[postaction = {decorate}] (1,2)--(1,1);
      \draw[postaction = {decorate}] (1,2)--(0.5,2.5);
 \end{scope}
   \end{tikzpicture}
   \caption{The boundary meets the square in two adjacent sides~: on the left $\bar{w_1}w_2$, on the right $\bar{w'_1}w'_2$. It's clear that $w'_2$ is superficial non-elliptic, and that $w'_2$ is superficial semi-non-ellipitic. }
   \label{fig:square-ad-sides}
 \end{figure}
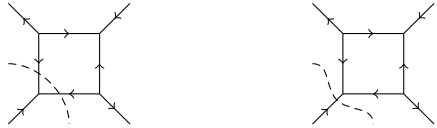
  \begin{proof} The situation is illustrated by figure \ref{fig:square-ad-sides}.
    First notice that in this situation $\w$ cannot be symmetric. As before, the connectedness hypothesis allows us to suppose that the interior of the disk $D$ delineated by $S$ is disjoint from the web. The restriction of the border to $D$ is just a simple arc connecting two adjacent sides. We move a little this arc : we push it outside the square through the common vertex of the two adjacent sides and we obtain a new pair of $\epsilon'$-webs $\w'=(w'_1,w'_2)$ with $l(\epsilon')=l(\epsilon)-1$. Now the square $S$ doesn't meet the border anymore. With no loss of generality we can assume that it lies in $w'_1$. The $\epsilon'$-web $w'_2$ is superficial and non-elliptic. The $\epsilon'$-web $w'_1$ is superficial (the only faces which could be nested in $w'_1$ are the one next to the square, but they are obviously not) and semi-non-elliptic. So it exists a finite collection $\left(w_1^{(k)}\right)$ of non-elliptic superficial $\epsilon'$-webs and some positive integers $\lambda_k$ such that in the skein module $S^{\epsilon'}$, $w_1 =\sum_k \lambda_k w_1^{(k)}$. Let $\w^{(k)} = (w_1^{(k)},w'_2)$. It's clear that $\w^{(k)}<\w$ for all $k$. Suppose  that all the $\w^{(k)}$ are nice. \[\deg \kup{\bar{w_1}w_2} = \deg\left( \sum_k \kup{\bar{w_1^{(k)}}w_2}\right) \leq l(\epsilon') = l(\epsilon) -1.\]
This proves that $\w$ is nice.
  \end{proof}
We will now inspect the case where the border meets just one side of the square. This is the most technical part, so we need to separate it in different sub-cases. For this we will need to consider the face adjacent to the square and to the side opposite to the one that meets the border. We call this face the \emph{opposed face}.

\begin{lem}\label{lem:square_same_side0}
Let  $\w =(w_1,w_2)$ be an element of $W$ such that $\bar{w_1}w_2$ is connected and contains a square $S$. Suppose $S$ intersects the border of $\w$ in exactly two points on the same side and the opposed face $F$ or one of its neighbors different from the square meets the boundary. Then there exists a finite collection $\left(\w^{(k)}\right)$ with $\w^{(k)}<\w$ for all $k$, such that if $\w^{(k)}$ is nice for all $k$ then  $\w$ is nice.     
 \end{lem}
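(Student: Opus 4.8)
The strategy is the one already used in Lemmas \ref{lem:square:Opp-side} and \ref{lem:square:adj-side}: work inside the closed web $\bar{w_1}w_2$, keep track of the border as in Notation \ref{dfn:border}, simplify the square $S$, and then recognise each web (or pair of webs) that appears as a positive combination — furnished by Lemmas \ref{lem:sneni}, \ref{lem-1-elliptic} and, where needed, \ref{lem-semi-superficial} — of pairs of superficial non-elliptic webs that are strictly smaller than $\w$ for the order of Definition \ref{dfn:order}. Concretely I would first fix the local picture: by connectedness I may assume that the open disk $D$ bounded by $S$ misses the web and that the portion of the border lying in $D$ is a single arc $a$ with both endpoints on the border-side $e$ of $S$; write $e'$ for the side opposite to $e$, $F$ for the opposed face (across $e'$), $G$ for the face across $e$, $e_1,e_2$ for the two remaining sides and $L,R$ for the faces across them. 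Applying the square relation of Proposition \ref{prop:Kup} to $S$ gives
\[
\kup{\bar{w_1}w_2}=\kup{\bar{u_1}u_2}+\kup{\bar{v_1}v_2},
\]
where $(u_1,u_2)$ is the smoothing that deletes $e$ and $e'$ — so the excursion of the border through $e$ is released and the boundary length drops to $l(\epsilon)-2$ — and $(v_1,v_2)$ is the smoothing that deletes $e_1$ and $e_2$, which keeps $e$, leaves the boundary length equal to $l(\epsilon)$, and leaves the border making the same excursion, now into the face obtained by merging $L$, $S$ and $R$.

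For the first term, the two deleted sides merge $F$, $S$ and $G$ into a single (large) face while $L$ and $R$ are only reshaped, so $(u_1,u_2)$ is superficial and the only faces that could still violate non-ellipticity are $L$ and $R$; Lemmas \ref{lem:sneni} and \ref{lem-1-elliptic} then express it as a positive combination of pairs of superficial non-elliptic webs of boundary length $l(\epsilon)-2$. For the second term I would retract the arc $a$ back across $e$ into $G$: this removes the two remaining crossings, so the boundary length again drops to $l(\epsilon)-2$, and it is exactly here that the hypothesis is used — because $F$, or one of its neighbours other than the old square, already meets the border, the face of $G$ that receives the retracted finger cannot be enclosed, so the resulting pair stays superficial and is semi-non-elliptic (or at worst semi-superficial, to be handled by Lemma \ref{lem-semi-superficial}); one more application of Lemma \ref{lem:sneni} (or \ref{lem-semi-superficial}) writes it as a positive combination of superficial non-elliptic pairs of boundary length $l(\epsilon)-2$. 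Every pair $\w^{(k)}$ produced in this way has boundary of length $l(\epsilon)-2<l(\epsilon)$, hence $\w^{(k)}<\w$.

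To conclude, note that $\w$ cannot be symmetric: if $w_1=w_2$, the reflection across the border fixes the canonical face $S$, and hence either fixes its border-side $e$ — which would force $e$ to meet the border in a single point — or carries $e$ to another border-side of $S$ — which would force at least four border–$S$ intersection points — contradicting the hypothesis either way. So it suffices to bound the degree; since in each of the two terms only one of the two webs of a pair needs to be reduced and the boundary length has dropped by two, carrying the (degree $\le 1$) polynomial factors of Lemmas \ref{lem-1-elliptic} and \ref{lem-semi-superficial} through the displayed identity and using niceness of the $\w^{(k)}$ gives $\deg\kup{\bar{w_1}w_2}<l(\epsilon)$, so $\w$ is nice. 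The main obstacle is the adjacency bookkeeping concealed in the two middle claims: showing that $L$ and $R$ land in different blocks after the first smoothing, that the retracted finger in $G$ lands on a face adjacent to the unbounded one (so that no nested face — other than the hexagon allowed by semi-superficiality — is created), and that no digon appears. This is precisely where the assumption on the opposed face $F$ is consumed, and the situations in which $F$ and all its neighbours except $S$ miss the border are exactly the remaining sub-cases, to be settled in the lemmas that follow.
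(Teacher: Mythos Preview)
Your route is genuinely different from the paper's and a good deal more laborious. The paper does not apply the square relation at all here: it simply pushes the arc $a$ out of $D$ through $e$ (exactly the move you perform only on your \emph{second} smoothing), yielding a single pair $\w'=(w'_1,w'_2)$ of $\epsilon'$-webs with $l(\epsilon')=l(\epsilon)-2$. Then $w'_2$ is $w_2$ with one boundary arc deleted, hence still superficial non-elliptic, while $w'_1$ is $w_1$ with the whole square $S$ now enclosed as a bounded face. The face $S$ is adjacent to the unbounded face via $e$, and $L,R$ remain adjacent to it through their common external edges with $G$; the only face in danger of becoming nested is $F$, and the hypothesis is precisely what rules that out. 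So $w'_1$ is superficial with a single square, i.e.\ semi-non-elliptic, and one call to Lemma~\ref{lem:sneni} (integer coefficients only) finishes the argument. Lemmas~\ref{lem-1-elliptic} and~\ref{lem-semi-superficial} are not needed at all.

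Your two-term argument can in principle be completed, but there is a misidentification in your treatment of the second smoothing: after deleting $e_1,e_2$ and retracting $a$ into $G$, the face at risk of being nested in $v'_1$ is not ``the face of $G$'' --- $G$ carries the border and is automatically part of the unbounded face --- but $F$, which after the smoothing is separated from the unbounded region by the bounded merged $L$--$S$--$R$ face unless some other neighbour of $F$ touches the border. So the hypothesis is consumed for the same reason as in the paper, only through a more complicated local picture you have created by first destroying the square. The extra bookkeeping you flag (whether $L'$ and $R'$ land in distinct blocks in the first smoothing, the possible degree-$1$ factors coming from Lemma~\ref{lem-1-elliptic}) is genuine additional work your route forces and the paper's avoids entirely.
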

 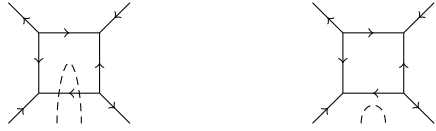
\begin{figure}[h!]
   \centering
   \begin{tikzpicture}[scale= 0.8]
     \begin{scope}[scale={1},decoration={markings, mark=at
     position 0.5 with {\arrow{>}}},postaction={decorate}]
       \draw[densely dashed] (1.3,0.5) .. controls (1.3,1) and (1.4,1.5) .. (1.5, 1.5).. controls (1.6,1.5) and (1.7,1) .. (1.7,0.5);
      \draw[postaction = {decorate}] (0.5,0.5)--(1,1);
      \draw[postaction = {decorate}] (2.5,2.5)--(2,2);
      \draw[postaction = {decorate}] (2,1)--(1,1);
      \draw[postaction = {decorate}] (2,1)--(2,2);
      \draw[postaction = {decorate}] (2,1)--(2.5,0.5);
      \draw[postaction = {decorate}] (1,2)--(2,2);
      \draw[postaction = {decorate}] (1,2)--(1,1);
      \draw[postaction = {decorate}] (1,2)--(0.5,2.5);
 \end{scope}
\begin{scope}[xshift = 5cm, scale={1},decoration={markings, mark=at
     position 0.5 with {\arrow{>}}},postaction={decorate}]
       \draw[densely dashed] (1.3,0.5) .. controls (1.3,0.7) and (1.4,0.8) .. (1.5, 0.8).. controls (1.6,0.8) and (1.7,0.7) .. (1.7,0.5);
      \draw[postaction = {decorate}] (0.5,0.5)--(1,1);
      \draw[postaction = {decorate}] (2.5,2.5)--(2,2);
      \draw[postaction = {decorate}] (2,1)--(1,1);
      \draw[postaction = {decorate}] (2,1)--(2,2);
      \draw[postaction = {decorate}] (2,1)--(2.5,0.5);
      \draw[postaction = {decorate}] (1,2)--(2,2);
      \draw[postaction = {decorate}] (1,2)--(1,1);
      \draw[postaction = {decorate}] (1,2)--(0.5,2.5);
 \end{scope}
   \end{tikzpicture}
   \caption{Illustration of lemma \ref{lem:square_same_side0} : on the left $\bar{w_1}w_2$, on the right $\bar{w'_1}w'_2$. It's clear that $w'_2$ is superficial non-elliptic, and that $w'_2$ is superfical semi-non-ellipitic. }
   \label{fig:square-same-side0}
 \end{figure}
 \begin{proof}
First notice that in this case $\w$ cannot be symmetric. The four vertices of $S$ lie on the same side of the border, we can suppose that this is on the $w_1$ side.
As usual, we can suppose that the interior of the disk $D$ delineated by the square is disjoint from the web. The restriction of the border is just a simple arc in $D$ joining one side to itself. We move the border locally by pushing it away from $S$ in $w_2$. We obtain a new pair of $\epsilon'$-webs $\w'=(w'_1,w'_2)$, with $l(\epsilon') = l(\epsilon) -2$. It's clear that $w'_1$ is superficial and  semi-non-elliptic. So it exists a finite collection $\left(w_1^{(k)}\right)$ of non-elliptic superficial $\epsilon'$-webs and some positive integers $\lambda_k$ such that in the skein module  $S^{\epsilon'}$, $w_1 =\sum_k \lambda_k w_1^{(k)}$. Set $\w^{(k)} = (w_1^{(k)},w'_2)$.  Suppose  that all the $\w^{(k)}$ are nice. Then $\deg \kup{\bar{w_1}w_2}\leq l(\epsilon')-2 < l(\epsilon)$ and hence $\w$ is nice.
 \end{proof}

 \begin{lem}\label{lem:square_same_side1}
   Let  $\w =(w_1,w_2)$ be an element of $W$ such that $\bar{w_1}w_2$ is connected, contains no digon and contains a square $S$. Suppose $S$ intersects the border of $\w$ in exactly two points on the same side and the opposed $F$ face has at least 8 sides. Then there exists a finite collection $\left(\w^{(k)}\right)$ with $\w^{(k)}<\w$ for all $k$, such that $\w^{(k)}$ nice for all $k$ implies $\w$ nice.  
 \end{lem}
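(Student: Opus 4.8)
The plan is to apply the Kuperberg square relation of Proposition \ref{prop:Kup} to the square $S$ and to recognize each of the two resulting pairs of webs, after one application of Lemma \ref{lem:sneni}, as a positive combination of pairs of superficial non-elliptic webs strictly smaller than $\w$; the degree bookkeeping then forces $\w$ to be nice. As preliminaries, in the style of Lemmas \ref{lem:square:adj-side} and \ref{lem:square_same_side0}: since exactly one side $s$ of $S$ meets the border, all four vertices of $S$ lie on one side of it, say the $w_1$ side, and the reflection exchanging $w_1$ and $w_2$ cannot preserve this picture, so $\w$ is \emph{not} symmetric and it will be enough to prove $\deg\kup{\bar{w_1}w_2}<l(\epsilon)$. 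Using connectedness of $\bar{w_1}w_2$ I may assume the open disk $D$ bounded by $S$ meets the web in nothing, so inside $D$ the border is a single arc with both endpoints on $s$, cutting off a lens of $D$ which belongs to the $w_2$ side.

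Next I would apply the square relation, writing $\kup{\bar{w_1}w_2}=\kup{\bar{w_1'}w'_2}+\kup{\bar{w_1''}w''_2}$ for the two resolutions, choosing in each case the routing of the border that keeps the lens inside the newly created face (possible since the resolution arcs may be drawn away from the small border bump). The crucial point is that \emph{both} resolutions erase all four sides of $S$, in particular the straddling side $s$, so in both the two border crossings lying on $s$ disappear without new ones being created, whence $l(\partial w'_1)=l(\partial w''_1)=l(\epsilon)-2$. In one resolution the opposed face $F$ merges with the face across $s$ and with the new strip, and the resulting face inherits at least $\#F\ge 8$ sides; in the other resolution $F$ is untouched and the two lateral neighbours of $S$ merge, producing a face with at least $6$ sides because $\bar{w_1}w_2$ has no digon. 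In either case the faces created by the move are neither digons nor squares (and one square per block suffices), so $w'_1,w'_2,w''_1,w''_2$ are superficial and semi-non-elliptic, and Lemma \ref{lem:sneni} rewrites each of them, in the relevant skein module, as a positive-integer combination of superficial non-elliptic webs with fewer vertices. Collecting these yields a finite family $\left(\w^{(k)}\right)$ of pairs of superficial non-elliptic webs, each of boundary length $l(\epsilon)-2$ and with strictly fewer vertices than $\w$, so $\w^{(k)}<\w$ for all $k$.

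For the degree count: if every $\w^{(k)}$ is nice then $\deg\kup{\bar{w_1^{(k)}}w_2^{(k)}}\le l(\epsilon)-2$ for each $k$ (with equality only when that particular pair happens to be symmetric), and therefore $\deg\kup{\bar{w_1}w_2}\le\max_k\deg\kup{\bar{w_1^{(k)}}w_2^{(k)}}\le l(\epsilon)-2<l(\epsilon)$, which is exactly the assertion that the non-symmetric element $\w$ is nice.

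The main obstacle I anticipate is the face-by-face verification in the second paragraph: for each of the two resolutions and for every face touched by the move one must check that superficiality and the ``at most one square per block'' property survive, keeping careful track of how the border is re-drawn so that $(w'_1,w'_2)$ and $(w''_1,w''_2)$ are genuinely well-defined pairs built from superficial semi-non-elliptic webs. This is precisely the point at which the quantitative hypothesis $\#F\ge 8$ is used: when $F$ has six or four sides the resolved webs need not be semi-non-elliptic and one must instead invoke Lemmas \ref{lem-1-elliptic} and \ref{lem-semi-superficial} (a nested hexagon adjacent to the square falling under the semi-superficial situation of Definition \ref{dfn:semi-super}), so those cases are treated in separate lemmas.
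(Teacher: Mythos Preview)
Your argument has a genuine gap at the point where you claim that \emph{both} resolutions of the square give boundary length $l(\epsilon)-2$. The Kuperberg square move keeps two opposite sides (with reversed orientation) and deletes the other two; in the resolution keeping $s$, the border still crosses the retained arc at two points, so that pair $\w''$ has boundary $\epsilon$ of the same length. Your workaround of routing the top arc below the border bump is equivalent to first pushing the border out of the square disk, and this move is \emph{not} innocuous: once the whole square lies in $w_1''$ and the interior of $S$ becomes a bounded face, the opposed face $F$ may only reach the unbounded face through $S$, making $F$ nested and destroying superficiality. This is precisely why Lemma~\ref{lem:square_same_side0} carries the extra hypothesis ``$F$ or one of its neighbours meets the boundary,'' and why the present lemma cannot simply re-use that border move.

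There is also a second problem: your face analysis tracks the wrong faces. The merged regions you examine ($F\cup S\cup\text{face-above-}s$ in one resolution, $L\cup S\cup R$ in the other) lie in the unbounded component of the half-webs, so their side count is irrelevant. What matters are the \emph{bounded} faces that lose two sides: in the resolution deleting $s$ these are the two lateral faces $L$ and $R$, each of which can drop from a hexagon to a square, possibly in the same block. Hence $w_1'$ is in general only $1$-elliptic, not semi-non-elliptic, and Lemma~\ref{lem:sneni} does not apply; one must invoke Lemma~\ref{lem-1-elliptic}, which produces polynomial coefficients of degree up to~$1$ and thus only the bound $1+l(\epsilon')=l(\epsilon)-1$. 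In the other resolution the face that loses sides is $F$ itself, and the hypothesis $\#F\ge 8$ is used exactly here to guarantee that $F$ retains at least $6$ sides, so that $w_1''$ stays non-elliptic. The paper keeps $l(\epsilon'')=l(\epsilon)$ for this branch; since $\w''$ has four fewer vertices it is still $<\w$, and the absence of digons forces $\w''$ to be non-symmetric, giving $\deg\kup{\bar{w_1''}w_2''}\le l(\epsilon)-1$. Combining the two branches yields $\deg\kup{\bar{w_1}w_2}\le l(\epsilon)-1$.
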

 \begin{figure}[h!]
   \centering
   \begin{tikzpicture}[scale= 0.8]
     \begin{scope}[scale={1},decoration={markings, mark=at
     position 0.5 with {\arrow{>}}},postaction={decorate}]
       \draw[densely dashed] (1.3,0.5) .. controls (1.3,1) and (1.4,1.5) .. (1.5, 1.5).. controls (1.6,1.5) and (1.7,1) .. (1.7,0.5);
      \draw[postaction = {decorate}] (0.5,0.5)--(1,1);
      \draw[postaction = {decorate}] (2.5,2.5)--(2,2);
      \draw[postaction = {decorate}] (2,1)--(1,1);
      \draw[postaction = {decorate}] (2,1)--(2,2);
      \draw[postaction = {decorate}] (2,1)--(2.5,0.5);
      \draw[postaction = {decorate}] (1,2)--(2,2);
      \draw[postaction = {decorate}] (1,2)--(1,1);
      \draw[postaction = {decorate}] (1,2)--(0.5,2.5);
 \end{scope}
\begin{scope}[xshift = 4cm, scale={1},decoration={markings, mark=at
     position 0.5 with {\arrow{>}}},postaction={decorate}]
  \draw[densely dashed] (1.3,0.5) .. controls (1.3,1) and (1.4,1.5) .. (1.5, 1.5).. controls (1.6,1.5) and (1.7,1) .. (1.7,0.5);
      \draw[postaction = {decorate}] (0.5,0.5) .. controls (1,1.5) and (1,1.5) .. (0.5, 2.5);
      \draw[postaction = {decorate}] (2.5,2.5).. controls (2,1.5) and (2, 1.5) ..(2.5,0.5);  
 \end{scope}
\begin{scope}[xshift = 8cm, scale={1},decoration={markings, mark=at
     position 0.5 with {\arrow{>}}},postaction={decorate}]
\draw[densely dashed] (1.3,0.5) .. controls (1.3,1) and (1.4,1.5) .. (1.5, 1.5).. controls (1.6,1.5) and (1.7,1) .. (1.7,0.5);
     \draw[postaction = {decorate}] (0.5,0.5) .. controls (1.5,1) and (1.5,1) .. (2.5, 0.5);
      \draw[postaction = {decorate}] (2.5,2.5).. controls (1.5,2) and (1.5,2) ..(0.5,2.5);  
 \end{scope}
   \end{tikzpicture}
   \caption{Illustration of lemma \ref{lem:square_same_side1} : from left to right : $\bar{w_1}w_2$, $\bar{w'_1}w'_2$ and $\bar{w''_1}w''_2$.}
   \label{fig:square-same-side1}
 \end{figure}
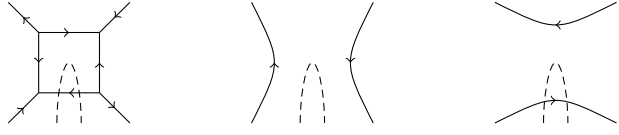
 \begin{proof}
The situation is illustrated on figure \ref{fig:square-same-side1}
First notice that in this case $\w$ cannot be symmetric. The four vertices of $S$ lie on the same side of the border, we can suppose that this is on the $w_1$ side.
As usual, we can suppose that the interior of the disk $D$ delineated by the square is disjoint from the web. The restriction of the border is just a simple arc in $D$ joining one side to itself. We perform the two reductions of the square by deleting two opposite sides, reversing the orientations on the two lasting sides and forgetting the four 2-valent vertices. We obtain one pair of $\epsilon'$-webs  $\w'=(w'_1,w_2')$ with $l(\epsilon')=l(\epsilon)-2$ (when deleting the sides which meet the border) and one pair of $\epsilon$-webs $\w''=(w''_1,w''_2)$  (when keeping the sides which meet the border). The $\epsilon'$-webs $w'_1$ and $w'_2$ are superficial and $1$-elliptic. Thanks to lemma \ref{lem-1-elliptic} there exist a finite collection $\left(w_1^{(i)}\right)$ of non-elliptic and superficial $\epsilon'$-webs and a finite collection $(P_i)$ of symmetric Laurent polynomial in $\NN[q,q^{-1}]$ with degree at most 1 such that in the skein module $S^{\epsilon'}$, $w'_1 =\sum_i P_i w_1^{(i)}$. On the other hand, $w'_2$ is superficial and non-elliptic. Denote $\w^{(i)}=(w_1^{(i)},w'_2)$. Let us inspect $\w''$ now, it's clear that the $\epsilon$-web $w''_1$  is superficial and the hypothesis made on $F$ implies that it is non-elliptic. The $\epsilon$-web $w''_2$  is clearly non-elliptic and superficial. The hypothesis on the absence of digon implies that $\w''$ is not symmetric. Suppose that $\w''$ and all the $\w^{(k)}$ are nice. Then we have :
\begin{align*}\deg\kup{\bar{w_1}w_2} &= \deg\left(\kup{\bar{w'_1}w'_2} + \kup{\bar{w''_1}w''_2} \right)=  \deg\left(\sum P_i\kup{\bar{w_1^{(i)}}w'_2} + \kup{\bar{w''_1}w''_2} \right) \\ &\leq \max (1 + l(\epsilon'), l(\epsilon) -1) = l(\epsilon) -1.
\end{align*}
This proves that $\w$ is nice.
 \end{proof}
 \begin{lem}\label{lem:square_same_side2}
   Let  $\w =(w_1,w_2)$ be an element of $W$ such that $\bar{w_1}w_2$ is connected, contains no digon and contains a square $S$. Suppose $S$ intersects the border of $\w$ in exactly two points on the same side and the opposed face $F$ is an hexagon and does not meet the border. Then there exists a finite collection $\left(\w^{(k)}\right)$ with $\w^{(k)}<\w$ for all $k$, such that $\w^{(k)}$ nice for all $k$ implies $\w$ nice.  
 \end{lem}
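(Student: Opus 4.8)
The plan is to run the square-reduction template of the proof of Lemma~\ref{lem:square_same_side1}; the only genuine difference is that now the opposed face $F$ degenerates to a square rather than to a face with at least six sides. First I would make the usual normalisations: by connectedness of $\bar{w_1}w_2$ I may assume that the disk $D$ bounded by $S$ is disjoint from the web, so that the border meets $S$ in an arc of $D$ joining one side $e$ of $S$ to itself, and that the four vertices of $S$ lie on the $w_1$ side of the border, so that resolving $S$ leaves $w_2$ untouched. As in the previous lemmas, $\w$ cannot be symmetric. Moreover I may assume that the face $G$ across $e$ is not a square: if it were, then $G$ would be a square meeting the border in two points on a single side whose opposed face is $S$, and $S$ meets the border, so Lemma~\ref{lem:square_same_side0} would already apply; hence $G$ has at least six edges.

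Then I apply the Kuperberg square relation to $S$, getting $\kup{\bar{w_1}w_2}=\kup{\bar{w_1'}w_2'}+\kup{\bar{w_1''}w_2''}$, where $\w'=(w_1',w_2')$ is the resolution deleting the pair of opposite sides of $S$ containing $e$ (boundary length $l(\epsilon)-2$) and $\w''=(w_1'',w_2'')$ the one keeping that pair (boundary length $l(\epsilon)$). The pair $\w'$ is handled verbatim as in Lemma~\ref{lem:square_same_side1}: $w_1'$ and $w_2'$ are superficial and $1$-elliptic, so Lemma~\ref{lem-1-elliptic} yields $\kup{\bar{w_1'}w_2'}=\sum_i P_i\kup{\bar{u_i}v_i}$ with $\deg P_i\le 1$ and $(u_i,v_i)\in W$ superficial non-elliptic of boundary length $l(\epsilon)-2$; these are smaller than $\w$, and their niceness gives $\deg\kup{\bar{w_1'}w_2'}\le 1+(l(\epsilon)-2)=l(\epsilon)-1$.

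The core of the argument is $\w''$. In this resolution the hexagon $F$ loses the side shared with $S$ together with the two sides of $S$ meeting it, hence $F$ becomes a genuine square $F''$ of $w_1''$. I would prove three things about $w_1''$: (a) $F''$ is \emph{not} nested --- the region of $S$ is part of the unbounded face of $w_1$ because $e$ dips across the border, and in $w_1''$ it merges with the two regions adjacent to the deleted sides of $S$ into a face which is still unbounded and is adjacent to $F''$; (b) $F''$ is the only square in its block --- the only other face of $w_1''$ that could be a square is the image of $G$, which is not a digon since $G$ is not a square, and which lies in a different block, since a path of bounded faces from $F$ to $G$ in $w_1$ would make one of the faces adjacent to $S$ nested, contradicting the superficiality of $w_1$; (c) no digon is created. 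Granting this, $w_1''$ is superficial and semi-non-elliptic, Lemma~\ref{lem:sneni} writes $w_1''=\sum_k\lambda_k u_k''$ with $\lambda_k\in\NN$ and $u_k''$ superficial non-elliptic with fewer vertices, so $\kup{\bar{w_1''}w_2''}=\sum_k\lambda_k\kup{\bar{u_k''}w_2}$ with each $(u_k'',w_2)<\w$; and as in Lemma~\ref{lem:square_same_side1} the absence of digons in $\bar{w_1}w_2$ makes each $(u_k'',w_2)$ non-symmetric, so their niceness forces $\deg\kup{\bar{u_k''}w_2}<l(\epsilon)$ and thus $\deg\kup{\bar{w_1''}w_2''}\le l(\epsilon)-1$. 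Combining, $\deg\kup{\bar{w_1}w_2}\le l(\epsilon)-1$, which, $\w$ not being symmetric, is exactly what ``$\w$ is nice'' means.

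I expect the main obstacle to be the face-counting of step (b): one has to be sure that collapsing the opposed face to a square neither creates a nested face nor places two squares in one block, so that $w_1''$ is genuinely semi-non-elliptic and not merely $1$-elliptic. This distinction is what makes the degree bookkeeping work: Lemma~\ref{lem:sneni} introduces only positive integer coefficients, so no spurious factor of $[2]$ enters and the bound stays at $l(\epsilon)-1$, whereas a $1$-elliptic conclusion would only give $l(\epsilon)$ and break the argument. Reducing beforehand to ``$G$ not a square'' via Lemma~\ref{lem:square_same_side0} is precisely the device that keeps us clear of the case where a digon would be manufactured.
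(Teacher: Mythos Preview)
Your route differs from the paper's, and the difference is exactly where your argument breaks. The paper does \emph{not} resolve the square $S$. Instead it pushes the border arc out of $S$ through the side $e$, obtaining a pair $\w'=(w'_1,w'_2)$ with boundary length $l(\epsilon)-2$. The point is that in $w'_1$ the square $S$ is now a genuine face, and the hexagon $F$ becomes the unique nested face: $w'_1$ is \emph{semi-superficial} in the sense of Definition~\ref{dfn:semi-super}, and Lemma~\ref{lem-semi-superficial} (set up precisely for this purpose) expresses $w'_1$ as $\sum_i P_i\,w_1^{(i)}$ with $\deg P_i\le 1$. Crucially, because the boundary length has already dropped by $2$, niceness of the pairs $(w_1^{(i)},w'_2)$ yields $\deg\kup{\bar{w_1^{(i)}}w'_2}\le l(\epsilon)-2$ \emph{regardless of symmetry}, so the total degree is at most $1+(l(\epsilon)-2)=l(\epsilon)-1$.

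Your $\w''$ branch does not enjoy this cushion: there the boundary length is still $l(\epsilon)$, so you genuinely need each $(u''_k,w_2)$ to be non-symmetric. Your justification ``as in Lemma~\ref{lem:square_same_side1}'' does not carry over. The no-digon argument in that lemma shows only that the \emph{immediate} resolution $(w''_1,w_2)$ is non-symmetric (because $w''_1=w_2$ would force $f_1=f_2$ and hence a digon on $e$). But here $w''_1$ is not non-elliptic --- it contains the new square $F''$ --- so you must push it through Lemma~\ref{lem:sneni}, which may perform several further square reductions far from $e$. Nothing you have said prevents one of the resulting non-elliptic $u''_k$ from coinciding with $w_2$; if it does, niceness gives $\deg\kup{\bar{u''_k}w_2}=l(\epsilon)$, and since all the $\lambda_k$ are positive and $\deg\kup{\bar{w'_1}w'_2}\le l(\epsilon)-1$, there is no cancellation and you only get $\deg\kup{\bar{w_1}w_2}\le l(\epsilon)$, which is not enough. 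This is the gap, and it is exactly why the paper introduces the semi-superficial machinery rather than following the template of Lemma~\ref{lem:square_same_side1}.
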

 \begin{proof}
First notice that in this case $\w$ cannot be symmetric. The four vertices of $S$ lie on the same side of the border, we can suppose that this is on the $w_1$ side. As usual, we can suppose that the interior of the disk $D_1$ delineated by the $S$ and interior of the disk $D_2$ delineate by the hexagon are disjoint from the web. The restriction of the border is just a simple arc in $D_1$ joining one side to itself. We move the border by pushing the arc out of $D_1$ (this is the same move as in figure \ref{fig:square-same-side0}). We denote $\w'=(w'_1,w'_2)$ the new pair of $\epsilon'$-webs with $l(\epsilon') = l(\epsilon)-2$. The $\epsilon'$-web $w'_1$ is clearly semi-superficial and the $\epsilon'$-web $w_2'$ is superficial and non-elliptic. The lemma \ref{lem-semi-superficial} tells us that there exists a finite collection $\left(w_1^{(i)}\right)$ of non-elliptic superficial $\epsilon'$-webs and a finite collection $(P_i)$ of symmetric Laurent polynomial in $\NN[q,q^{-1}]$ with degree at most 1 such that in the skein module $S^{\epsilon'}$, $w'_1 =\sum_i P_i w_1^{(i)}$. Denote $\w^{(i)}=(w_1^{(i)},w'_2)$, and suppose that all the $\w^{(i)}$ are nice. Then 
\[
\deg\kup{\bar{w_1}w_2} = \deg\left(\sum P_i\kup{\bar{w_1^{(i)}}w'_2} \right) \leq 1+l(\epsilon') =l(\epsilon)-1
\]
This proves that $\w$ is nice.
\end{proof}
\begin{prop}\label{prop:square2nice}
 Let  $\w =(w_1,w_2)$ be an element of $W$ such that $\bar{w_1}w_2$ is connected, contains no digon and contains a square $S$. Then there exists a finite collection $\left(\w^{(k)}\right)$ with $\w^{(k)}<\w$ for all $k$, such that $\w^{(k)}$ nice for all $k$ implies $\w$ nice. 
\end{prop}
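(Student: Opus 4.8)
The plan is to establish Proposition~\ref{prop:square2nice} by an exhaustive case analysis on the way the border of $\w$ meets the square $S$, each case being reduced either directly to a strictly smaller element of $W$, as in Proposition~\ref{prop:digon2nice}, or to one of Lemmas~\ref{lem:square:Opp-side}, \ref{lem:square:adj-side}, \ref{lem:square_same_side0}, \ref{lem:square_same_side1} and \ref{lem:square_same_side2}. Since $w_1$ and $w_2$ are non-elliptic, $S$ lies neither in $w_1$ nor in $w_2$, so the border meets $\partial S$, and by the orientation argument already used for circles in Lemma~\ref{lem:circle2nice} it does so in a positive even number of points. As usual, connectedness of $\bar{w_1}w_2$ lets us assume the interior of the $2$-cell $D$ bounded by $S$ is disjoint from the web, so that the border restricted to $D$ is a disjoint union of simple arcs whose endpoints lie on the four sides of $S$.

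First I would deal with the case where the border meets $\partial S$ in more than two points. One checks, as in Lemmas~\ref{lem:square:adj-side} and \ref{lem:square_same_side0}, that then $\w$ cannot be symmetric, and the restriction of the border to $D$ consists of at least two arcs. Pick an outermost one and push it out of $D$ exactly as in the proof of Proposition~\ref{prop:digon2nice}; this move disturbs neither superficiality nor non-ellipticity and produces a pair $\w'=(w'_1,w'_2)\in W$ with $\kup{\bar{w_1}w_2}=\kup{\bar{w'_1}w'_2}$ and $l(\partial w'_1)<l(\partial w_1)$, so that $\w'<\w$. If $\w'$ is nice, then $\deg\kup{\bar{w_1}w_2}=\deg\kup{\bar{w'_1}w'_2}\le l(\partial w'_1)<l(\partial w_1)$, and $\w$ is nice.

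It remains to treat the case where the border meets $\partial S$ in exactly two points; these lie on two opposite sides, on two adjacent sides, or on one and the same side of $S$. The first two configurations are precisely Lemmas~\ref{lem:square:Opp-side} and \ref{lem:square:adj-side}. For the ``same side'' configuration I would split according to the \emph{opposed face} $F$: if $F$, or a neighbour of $F$ distinct from $S$, touches the unbounded face, then Lemma~\ref{lem:square_same_side0} applies; if $F$ has at least eight sides, Lemma~\ref{lem:square_same_side1} applies; and if $F$ is a hexagon not meeting the border, Lemma~\ref{lem:square_same_side2} applies. The heart of the argument is to verify that these three possibilities are exhaustive. Since $\bar{w_1}w_2$ is bipartite (Remark~\ref{req:basic-on-web}), $F$ has an even number of sides; by the hypothesis of the proposition $F$ is not a digon, and once circles have been removed using Lemma~\ref{lem:circle2nice} it is not a circle either, so $F$ is a square, a hexagon, or a face with at least eight sides. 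The genuinely delicate point, which I expect to be the main obstacle, is the sub-case where $F$ is a square: one must use the superficiality of the web carrying the four vertices of $S$ (and of $F$) to show that one of the two faces flanking $S$ — each of which is a neighbour of $F$ other than $S$ — is forced to touch the unbounded face, for otherwise that face would be nested; this puts us back in the situation of Lemma~\ref{lem:square_same_side0}. A parallel but easier discussion rules out a hexagonal $F$ that meets the border without $F$ or one of its neighbours touching the unbounded face, so that the only remaining hexagonal cases are those of Lemmas~\ref{lem:square_same_side0} and \ref{lem:square_same_side2}.

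This exhausts all cases, and so proves Proposition~\ref{prop:square2nice}. Together with Lemmas~\ref{lem:circle2nice} and \ref{lem:notconnected2nice}, Proposition~\ref{prop:digon2nice}, and the remark that a connected closed web without circles, digons or squares must be empty (Remark~\ref{req:basic-on-web}), it closes the well-founded induction on $(W,<)$, hence yields Proposition~\ref{prop:superfical2monic} and the key lemma.
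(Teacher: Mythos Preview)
Your overall architecture matches the paper's, but the reduction in the ``more than two intersection points'' case has a genuine gap. You assert that pushing an outermost arc of the border out of the disc $D$ bounded by $S$ always yields $l(\partial w'_1)<l(\partial w_1)$, appealing to the digon argument of Proposition~\ref{prop:digon2nice}. For a digon this is true because any arc hits either one side twice or two adjacent sides. A square, however, has a third configuration: the outermost arc may join two \emph{opposite} sides of $S$. Pushing such an arc out across the remaining side removes two intersections of the border with $\partial S$ but creates two new intersections with the external edges attached to the two corners you slide over, so $l(\epsilon')=l(\epsilon)$ and your inequality $\w'<\w$ fails. This situation really occurs (take two parallel border arcs crossing $S$ from left to right), so it cannot be dismissed.

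The paper closes this gap in two steps. First, it runs the induction on the number of intersection points of the border with $S$, not on the order on $W$; the opposite-side push keeps $l(\epsilon)$ fixed and the vertex count unchanged, so once the base case is reached the $\w^{(k)}$ produced by the five lemmas are still $<\w$ in the order. Second --- and this is the point you omit entirely --- one must check that an opposite-side push cannot land on a \emph{symmetric} pair: otherwise Lemma~\ref{lem:square:Opp-side} would give $\deg\kup{\bar{w_1}w_2}=l(\epsilon)$ for a non-symmetric $\w$, which is the wrong conclusion. The paper rules this out using the no-digon hypothesis. Your exhaustiveness discussion for the ``same side'' sub-case is essentially correct in spirit, though note that the hypothesis of Lemma~\ref{lem:square_same_side0} is that $F$ or a neighbour meets the \emph{border}, not the unbounded face of the closed web; your superficiality argument for the square $F$ should be phrased accordingly.
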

\begin{proof}
The border must cut the square $S$. prove do this by induction on the number of intersection points of the border with $S$. If the border meets $S$ two times then the lemmas \ref{lem:square:Opp-side}, \ref{lem:square:adj-side}, \ref{lem:square_same_side0}, \ref{lem:square_same_side1} and \ref{lem:square_same_side2} give the result. If it has more that two intersections points, then $\w$ is not symmetric. We move apart of the border (an outer arc) outside $S$ without increasing the length of $\epsilon$ (in case the arc meets adjacent sides we do like in \ref{lem:square:adj-side}, in case it meets the same side two times we do the move described on figure \ref{fig:square-same-side1}, in case all the outer arcs meet two opposite sides we perform the move depicted on figure~\ref{fig:square-remaing-move}). The only thing to realize is that when one move an arc joining to opposite side of $S$ out of $S$ this cannot result to a symmetric $\w$ because of the no-digon hypothesis. These moves decrease the number of intersecting points of $S$ with the border and we can use the recursion hypothesis. 
 \end{proof}
\begin{figure}[h!]
  \centering
  \begin{tikzpicture}[scale = 0.8]
     \begin{scope}[scale={1},decoration={markings, mark=at
     position 0.5 with {\arrow{>}}},postaction={decorate}]
   \draw[densely dashed] (0.086,1.5) --(2.914,1.5);
      \draw[postaction = {decorate}] (0.5,0.5)--(1,1);
      \draw[postaction = {decorate}] (2.5,2.5)--(2,2);
      \draw[postaction = {decorate}] (2,1)--(1,1);
      \draw[postaction = {decorate}] (2,1)--(2,2);
      \draw[postaction = {decorate}] (2,1)--(2.5,0.5);
      \draw[postaction = {decorate}] (1,2)--(2,2);
      \draw[postaction = {decorate}] (1,2)--(1,1);
      \draw[postaction = {decorate}] (1,2)--(0.5,2.5);
 \end{scope}

\begin{scope}[xshift= 5cm, scale={1},decoration={markings, mark=at
     position 0.5 with {\arrow{>}}},postaction={decorate}]
   \draw[densely dashed] (0.086,1.5) .. controls (0.7,1.5) and (0.4,0.7) ..(1.5,0.7).. controls (2.6,0.7) and (2.3,1.5)..(2.914,1.5);
      \draw[postaction = {decorate}] (0.5,0.5)--(1,1);
      \draw[postaction = {decorate}] (2.5,2.5)--(2,2);
      \draw[postaction = {decorate}] (2,1)--(1,1);
      \draw[postaction = {decorate}] (2,1)--(2,2);
      \draw[postaction = {decorate}] (2,1)--(2.5,0.5);
      \draw[postaction = {decorate}] (1,2)--(2,2);
      \draw[postaction = {decorate}] (1,2)--(1,1);
      \draw[postaction = {decorate}] (1,2)--(0.5,2.5);
 \end{scope}
  \end{tikzpicture}
  \caption{When all the outer arc meet the two side, we move one of this arc out of the square.}
  \label{fig:square-remaing-move}
\end{figure}
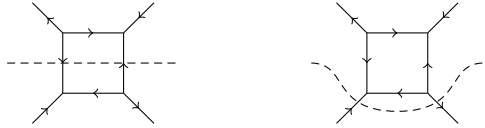
 \begin{proof}[Proof of theorem \ref{thm:superficial2monic}]
   As we said, this is enough to prove \ref{prop:superfical2monic}. We do this by induction with respect to the order on $W$. It's clear that the result is true for $(\emptyset,\emptyset)$. Suppose we have an element $\w = (w_1,w_2)$ of $W$ such that for all $\w'$ of $W$ with $\w'<\w$ then $\w'$ is nice. Then depending on how $\bar{w_1}w_2$ looks like we can apply lemma~\ref{lem:notconnected2nice}, lemma \ref{lem:notconnected2nice}, proposition~\ref{prop:digon2nice} or proposition~\ref{prop:square2nice}, and this shows that $\w$ is nice.
 \end{proof}
\bibliographystyle{alpha}
\bibliography{../biblio}
\end{document}